\newcommand{\F}{\mathbb F}
\newcommand{\Z}{\mathbb Z}
\newcommand{\Q}{\mathbb Q}
\newcommand{\R}{\mathbb R}
\newcommand{\co}{\mathcal O}
\newcommand{\C}{\mathbb C}
\newcommand{\CCD}{\mathcal {D}}
\newcommand{\CS}{\mathcal {V}}
\newcommand{\CF}{\mathcal {F}}
\newcommand{\od}{\mathfrak{n}}
\newcommand{\PP}{\mathbb P}
\DeclareMathOperator{\Gal}{Gal}
\DeclareMathOperator{\Aut}{Aut}
\DeclareMathOperator{\Res}{Res}
\DeclareMathOperator{\Prob}{Prob}
\DeclareMathOperator{\re}{Re}
\DeclareMathOperator{\im}{Im}
\DeclareMathOperator{\disc}{Disc}
\DeclareMathOperator{\cond}{Cond}
\DeclareMathOperator{\val}{val}
\DeclareMathOperator{\Frob}{Frob}
\newtheorem{lem}{Lemma}[section]
\newtheorem{prop}[lem]{Proposition}
\newtheorem{thm}[lem]{Theorem}
\newtheorem{cor}[lem]{Corollary}
\theoremstyle{definition}
\newtheorem{rem}[lem]{Remark}
\title{The distribution of $\F_q$-points on cyclic $\ell$-covers of genus $g$}
\author{Alina Bucur, Chantal David, Brooke Feigon, Nathan Kaplan, Matilde Lal\'{i}n, Ekin Ozman, Melanie Matchett Wood}
\address{Alina Bucur: Department of Mathematics, University of California at San Diego,
9500 Gilman Drive $\#$0112, La Jolla, CA 92093, USA} \email{alina@math.ucsd.edu}
\address{Chantal David: Department of Mathematics and Statistics,
Concordia University,
1455 de Maisonneuve West,
Montreal, QC H3G 1M8, Canada} \email{cdavid@mathstat.concordia.ca}
 \address{Brooke Feigon: Department of Mathematics,
The City College of New York and CUNY Graduate Center,
NAC 8/133,
New York, NY 10031, USA} \email{bfeigon@ccny.cuny.edu }
\address{Nathan Kaplan: Department of Mathematics, Yale University, 10 Hillhouse Ave,
New Haven, CT 06511, USA}\email{nathan.kaplan@yale.edu}
 \address{Matilde Lal\'in: D\'epartement de math\'ematiques et de statistique,
                                    Universit\'e de Montr\'eal.
                                    CP 6128, succ. Centre-ville.
                                     Montreal, QC H3C 3J7, Canada} \email{mlalin@dms.umontreal.ca}
\address{Ekin Ozman: Bogazici University,
Faculty of Arts and Science,
Department of Mathematics,
34342, Bebek-Istanbul, Turkey } \email{ekin.ozman@boun.edu.tr}
\address{Melanie Matchett Wood: Department of Mathematics, University of Wisconsin-Madison, 480 Lincoln Drive, Madison, WI 53706, USA and
American Institute of Mathematics, 360 Portage Ave,
Palo Alto, CA 94306-2244 USA } \email{mmwood@math.wisc.edu}
\date\today
\begin{document}
\begin{abstract}
We study  fluctuations in the number of points of $\ell$-cyclic covers of the projective line over the finite field $\F_q$ when $q \equiv 1 \mod \ell$ is fixed and the genus tends to infinity.  The distribution is given as a sum of $q+1$ i.i.d. random variables. This was  settled for hyperelliptic curves by Kurlberg and Rudnick \cite{KuRu2009}, while statistics were obtained for certain components of the moduli space of $\ell$-cyclic covers in  \cite{bdfl}. In this paper, we obtain statistics for the distribution of the number of points as the covers vary over the full moduli space of $\ell$-cyclic covers of genus $g$. This is achieved by relating $\ell$-covers to cyclic function field extensions, and  counting such extensions with prescribed ramification and splitting conditions at a finite number of primes.
\\
\\
{\bf Keywords:} curves over finite fields, distribution of number of points, function field extensions, local behavior

\end{abstract}

\maketitle
\tableofcontents

\section{Introduction and results} \label{section-intro}

Let $q$ be a prime power, and let $\F_q$ be the finite field with $q$ elements.
The goal of this paper is to establish statistics for the distribution of the
number of $\F_q$-points of
$\ell$-cyclic covers $C$ of $\PP^1$ defined over $\F_q,$ as $C$ varies over the
moduli space $\mathcal H_{g, \ell}$ of such covers of genus $g$ for large $g$
(and fixed $q$). We always suppose that  $\ell$ is a prime number such that $q
\equiv 1 \pmod \ell.$ For $\ell=2$ (the case of hyperelliptic curves), this was
addressed by Kurlberg and Rudnick \cite{KuRu2009} who showed that the
probability that $\# C(\F_q) = m$ for some integer $m$ is the probability that
the sum of $q+1$ independent and identically distributed (i.i.d.) random
variables is equal to $m$. This was generalized to cyclic $\ell$-covers of
degree $d$ by the first, second, third and fifth named authors in
\cite{bdfl}  who obtained statistics for each irreducible component $\mathcal H^{(d_{1}, \dots, d_{\ell-1})}$ of the
moduli space \begin{eqnarray} \label{modulicomponents}
\mathcal H_{g,\ell}= \bigcup_{\substack{d_{1}+2d_{2} + \dots + (\ell-1)
d_{\ell-1} \equiv 0 \pmod
\ell,\\ 2g= (\ell-1) (d_1+\cdots+d_{\ell-1}-2)}} \mathcal H^{(d_{1}, \dots, d_{\ell-1})},
\end{eqnarray}
as $\min\{d_1, d_2, \dots, d_\ell\}$ tends to infinity. These components will be defined in \ref{subsec:affine}.  Similarly to the hyperelliptic case, the probability that $\#
C(\F_q) = m$ for some integer $m$, as $C$ varies over $\mathcal H^{(d_{1},
\dots, d_{\ell-1})}$ and $\min \{d_1, \dots, d_{\ell-1}\} \rightarrow \infty$, is the
probability that the sum of $q+1$ i.i.d. random variables is equal to $m$. The
i.i.d. random variables $X_1, \dots, X_{q+1}$ are given by (for any prime $\ell
\geq 2$)
\begin{eqnarray} \label{RV}
X_i = \begin{cases} 0 & \textrm{ with probability } \displaystyle \frac{(\ell
-1)q}{\ell(q +\ell -1)},\\
&\\
1 & \textrm{ with probability } \displaystyle \frac{\ell -1}{q+ \ell -1 }, \\
&\\
\ell & \textrm{ with probability }  \displaystyle  \frac{q}{\ell(q +\ell -1)}.
\\
\end{cases}\end{eqnarray}
As the statistics hold for $\min\{d_1, \dots, d_{\ell-1}\} \rightarrow \infty$, this result
does not give statistics for the distribution of the number of $\F_q$-points on covers as we vary over all of
$\mathcal H_{g, \ell}$, since $g \rightarrow \infty$ does not mean that $\min \{d_1,
\dots, d_{\ell-1}\} \rightarrow \infty$ on all components $\mathcal H^{(d_{1},
\dots, d_{\ell-1})}$ for a given genus in \eqref{modulicomponents}. Other
statistics for cyclic $\ell$-covers
were also obtained by counting the covers in a different way (which does not
preserve the genus)
by Xiong \cite{Xi2010} and Cheong, Wood and Zaman \cite{ChWoZa2012}, and the distribution of the number of (affine)
$\F_q$-points on those
covers  was also given by a sum of i.i.d. random variables but with different probabilities than
the random variables of \eqref{RV}.

We show in this paper that the statistics for the distribution of the number of
$\F_q$-points for covers in $\mathcal H_{g, \ell}$ are also given by the random
variables \eqref{RV}. The strategy is completely different from the work in
\cite{bdfl}. There, the counting is done directly by considering affine models for the
covers in each separate component $\mathcal{H}^{(d_1,\dots,d_{\ell-1})}$ of the moduli space.
Here, we study the equivalent question of counting the number of
extensions of the function field $K = \F_q(X)$ with Galois group $\Z/\ell \Z$,
conductor of degree $\od$, and prescribed splitting/ramification conditions at a
finite set of fixed primes of $\F_q(X)$. As a result, we directly obtain the total count
in $\mathcal H_{g, \ell}$. We explain in Section
\ref{section-equivalence} why these two questions are equivalent, and give
general formulas for the number of points on covers in terms of the distribution
of the function field extensions that they define.

In order to count the cyclic function field extensions associated to our
statistics for point counting on covers, we use a classical approach described
by Wright in \cite{Wr1989} (and first due to Cohn \cite{Co54}
 for the case of cubic extensions of the rationals), which is to study the
generating series
\begin{equation} \label{first-GS}
 \sum_{\Gal(L/K) \cong G} \mathfrak{D}(L/K)^{-s},
\end{equation}
where $\mathfrak{D}(L/K)$ is the absolute norm of the discriminant $\disc (L/K)$.
The approach uses class field theory to give an explicit expression for the Dirichlet series
\eqref{first-GS}.
 This is
done in generality by Wright in \cite{Wr1989} for any
global field $K$ and any abelian group $G$. The count is then obtained by an
application of the Tauberian theorem, and the main term is given
by the rightmost pole of the Dirichlet series. The order of this pole varies
according to the group $G$ and the ground field $K$, (more precisely with the
number of roots of unity in $K$). This is described in \cite[Theorem
1.1]{Wr1989}.

In this paper, we apply those techniques to  the case
$K=\F_q(X)$ and
$G = \Z / \ell \Z$, and we further restrict to counting extensions with prescribed splitting conditions at the $\F_q$-rational places of $K$.  To find our desired statistics for point counts of curves, we need to
obtain explicit constants in our asymptotics, and in particular to understand how those constants
change as we change the splitting conditions.  For this, we use the last author's further development of Wright's method in \cite{woodabelian}, which determines probabilities of various splitting types in abelian extensions of number fields.  We are also interested in the secondary terms
and the power saving that can be obtained after taking them into consideration. Our results can then be used to get the
distribution of the number of points on covers as we vary over all of $\mathcal H_{g, \ell}$, but also have other
applications for statistics on the moduli spaces of curves over finite fields, such as the power of traces and
the one-level density. We give more details about these applications in Section \ref{outline}. We also compute the values of the constants for the leading term of the
asymptotic formulas, so the counts obtained with those techniques can be compared with the counts of \cite{bdfl} (see Section \ref{subsec:affine}).

We now state the main results of our paper. We first define some notation. Let   $\mathcal V_K$ be the set of places of $K$.
Let $N(\Z/\ell \Z, \od)$ be the number of extensions of $K=\F_q(X)$ with Galois group
$\Z/\ell \Z$ such that the degree of the conductor is equal to $\od$.
Let $\mathcal{V}_R$, $\mathcal{V}_S$, $\mathcal{V}_I$ denote three finite and disjoint sets of
places of $\F_q(X)$, and let $N(\Z/\ell
\Z, \od; \mathcal{V}_R, \mathcal{V}_S, \mathcal{V}_I)$ be the number of extensions of $\F_q(X)$ with
Galois group $\Z/\ell \Z$, which are ramified at the places of $\mathcal{V}_R$, split at the places of $\mathcal{V}_S$, and inert at the places of $\mathcal{V}_I$, and such that the degree of the conductor is equal to $\od$.

\begin{thm}
 \label{density} \label{thm:main}  Let $\ell \geq 2$ be a prime. Let $\mathcal{V}_R$, $\mathcal{V}_S$, $\mathcal{V}_I$
and $ N(\Z/\ell
\Z, \od; \mathcal{V}_R, \mathcal{V}_S, \mathcal{V}_I)$
 be as defined above, and let ${\mathcal V} = \mathcal{V}_R \cup \mathcal{V}_S \cup \mathcal{V}_I$.
Then, \begin{eqnarray*}
 N(\Z/ \ell \Z, \od) &=& C_\ell \; q^\od P(\od) + O \left( q^{\left( \frac{1}{2} + \varepsilon \right) \od}
\right), \\
 N(\Z/\ell
\Z, \od; \mathcal{V}_R, \mathcal{V}_S, \mathcal{V}_I) &=& C_\ell \left(\prod_{v \in
\mathcal V} c_v\right) \; q^\od P_{\mathcal{V}_R,\mathcal{V}_S, \mathcal{V}_I }(\od)  +  O \left( q^{\left( \frac{1}{2} + \varepsilon \right) \od}
\right),
 \end{eqnarray*}
where $P(X), P_{\mathcal{V}_R,\mathcal{V}_S, \mathcal{V}_I }(X)  \in \R[X]$ are monic polynomials of degree
$\ell-2$. Furthermore,  $C_\ell$ is the non-zero constant given by
\begin{eqnarray} \label{constantC0}
 C_\ell =   \frac{(1-q^{-2})^{\ell-1}}{(\ell-2)!}
\prod_{j=1}^{\ell-2}  \prod_{v\in \mathcal{V}_K} \left(1-\frac{jq^{-2\deg v}}{(1+q^{-\deg
v})(1+jq^{-\deg v})}\right) , \end{eqnarray}
and  for each place $v\in \mathcal V$, we have
$$c_{v} = \begin{cases} \displaystyle  \frac{(\ell-1)q^{-\deg
v}}{1+(\ell-1)q^{-\deg v}} & \textrm{if $v \in {\mathcal V}_R$},\\ \\
\displaystyle \frac{1}{\ell(1+(\ell-1)q^{-\deg{v}})} & \textrm{if $v \in {\mathcal V}_S$, }\\ \\
\displaystyle \frac{\ell-1}{\ell(1+(\ell-1)q^{-\deg{v}})} & \textrm{if $v \in {\mathcal V}_I$. }\end{cases}$$
Furthermore, for $\ell=2$ we obtain the exact count
\begin{eqnarray} \nonumber
N(\Z/2\Z,\od) &=&  \begin{cases} 2(q^\od - q^{\od-2}) & \od >2, \od \textrm{ even,}\\
2q^2 & \od =2, \\
0 & \od \textrm{ odd.} \end{cases}\end{eqnarray}
\[N(\Z/2\Z, \od, v_0, \mbox{ramified})=\frac{(1-q^{-2})}{1+q^{-\deg v_0}} q^{\od-\deg v_0}+O_q(1).\]
\end{thm}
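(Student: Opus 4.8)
The plan is to pass (via the equivalence of Section~\ref{section-equivalence}) to counting $\Z/\ell\Z$-field-extensions of $K=\F_q(X)$ by conductor degree, with prescribed splitting at the places of $\mathcal V$, to write the associated generating series down explicitly by class field theory as in \cite{Wr1989,woodabelian}, and to read off the asymptotics from its singularities by a Tauberian argument. For the parametrization: since $q\equiv 1\pmod\ell$ we have $\mu_\ell\subset\F_q^\times$, so Kummer theory identifies $\Z/\ell\Z$-extensions $L/K$ with the lines (one-dimensional $\F_\ell$-subspaces) in $K^\times/(K^\times)^\ell$, equivalently with $f\in K^\times/(K^\times)^\ell\setminus\{1\}$ modulo the free action of $(\Z/\ell\Z)^\times$. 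Writing $K^\times/(K^\times)^\ell\cong\F_q^\times/(\F_q^\times)^\ell\times\bigoplus_P\Z/\ell\Z$ over the monic irreducibles $P$, represent $f=c^{b}\prod_P P^{a_P}$ with $a_P\in\{0,\dots,\ell-1\}$ almost all zero and $b\in\Z/\ell\Z$. Because $\ell\ne\ch\F_q$ all ramification is tame, so $\mathfrak f(L_f)$ is the \emph{reduced} product of the ramified places: $P$ ramifies iff $a_P\ne 0$, and $\infty$ ramifies iff $\ell\nmid\sum_P a_P\deg P$, whence $\deg\mathfrak f(L_f)=\sum_{a_P\ne 0}\deg P+\mathbf 1\big[\ell\nmid\textstyle\sum_P a_P\deg P\big]$, independent of $b$. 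For a place $v$ not dividing $f$ and $\ne\infty$, the splitting of $v$ in $L_f$ is governed by the image of $f$ in $K_v^\times/(K_v^\times)^\ell$ --- $v$ splits if the $\ell$-th power residue symbol of $f$ at $v$ is trivial and is inert otherwise --- so each prescribed behavior at $v\in\mathcal V$ becomes a power-residue/congruence condition on $f$ modulo $v$.

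Next I would build the generating series $\Phi(u)=\sum_{f}u^{\deg\mathfrak f(L_f)}$ and its $(\mathcal V_R,\mathcal V_S,\mathcal V_I)$-restricted analogue, so that $N(\Z/\ell\Z,\mathfrak n)$ and $N(\Z/\ell\Z,\mathfrak n;\mathcal V_R,\mathcal V_S,\mathcal V_I)$ are, up to the elementary factor counting the generators of a line and the harmless $\mathfrak n=0$ term, the $u^{\mathfrak n}$-coefficients. Detecting the congruence $\ell\mid\sum_P a_P\deg P$ (and, at $v\in\mathcal V$, the power-residue conditions) with the characters $\psi$ of $\Z/\ell\Z$ turns $\Phi(u)$ into a finite $\C$-linear combination, indexed by $\psi$, of Euler products $\prod_v E_v(\psi;u)$ whose factor at $v\notin\mathcal V$ is $1+(\ell-1)u^{\deg v}$ when $\psi$ is trivial and $1+\big(\sum_{a=1}^{\ell-1}\psi(a\deg v)\big)u^{\deg v}\in\{\,1+(\ell-1)u^{\deg v},\ 1-u^{\deg v}\,\}$ otherwise, and at $v\in\mathcal V$ is the sub-sum of those terms compatible with the prescribed ramified/split/inert behavior. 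Comparing with $Z_K(u)=\prod_v(1-u^{\deg v})^{-1}=\big((1-u)(1-qu)\big)^{-1}$ and using $(1-t)\big(1+(\ell-1)t\big)=1+(\ell-2)t+O(t^2)$, the $\psi$-trivial product equals $(1-qu)^{-(\ell-1)}$ times a product that converges absolutely and is non-vanishing on $|u|<q^{-1/2}$, whereas each nontrivial $\psi$ contributes a product analytic on $|u|<q^{-1/2}$ away from at most simple poles on the circle $|u|=q^{-1}$ (at $u=\zeta^{-1}q^{-1}$ for the nontrivial $\ell$-th roots of unity $\zeta$). So $\Phi(u)$, and likewise the restricted series, has a pole of order exactly $\ell-1$ at $u=q^{-1}$ coming only from the $\psi$-trivial term, lower-order (``secondary'') singularities elsewhere on $|u|=q^{-1}$, and is otherwise analytic up to radius $q^{-1/2}$; the restricted series has the same shape with its order-$(\ell-1)$ part multiplied by $\prod_{v\in\mathcal V}c_v$.

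A Tauberian/transfer argument would then give $[u^{\mathfrak n}]\Phi(u)$ as $q^{\mathfrak n}$ times a real polynomial in $\mathfrak n$ of degree $\ell-2$, with leading coefficient $\frac1{(\ell-2)!}\lim_{u\to q^{-1}}(1-qu)^{\ell-1}\Phi(u)$, plus secondary terms from the remaining singularities on $|u|=q^{-1}$, plus an error $O\big(q^{(\frac12+\varepsilon)\mathfrak n}\big)$ coming from the region $|u|\ge q^{-1/2}$; normalizing the degree-$(\ell-2)$ polynomial to be monic records the residue as the constant $C_\ell$ (resp.\ $C_\ell\prod_{v\in\mathcal V}c_v$), and reading off the ratio of the prescribed $v$-factor to $1+(\ell-1)u^{\deg v}$ at $u=q^{-\deg v}$ gives the three cases for $c_v$ exactly as stated. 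Finally $\lim_{u\to q^{-1}}(1-qu)^{\ell-1}\Phi(u)$ is computed by substituting $u=q^{-1}$ into the absolutely convergent product $\prod_v(1-q^{-\deg v})^{\ell-1}\big(1+(\ell-1)q^{-\deg v}\big)$; the identity $1-\frac{jq^{-2\deg v}}{(1+q^{-\deg v})(1+jq^{-\deg v})}=\frac{1+(j+1)q^{-\deg v}}{(1+q^{-\deg v})(1+jq^{-\deg v})}$ telescopes $\prod_{j=1}^{\ell-2}$ over each $v$ into $\frac{1+(\ell-1)q^{-\deg v}}{(1+q^{-\deg v})^{\ell-1}}$, and rewriting the latter against $Z_K(q^{-2})=\big((1-q^{-2})(1-q^{-1})\big)^{-1}$ matches it to the convergent product above, producing the closed form \eqref{constantC0}.

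For $\ell=2$ every nontrivial local factor is $1\pm u^{\deg v}$, so the Euler products collapse to rational functions of $u$ built from $Z_K(u)$ and $Z_K(u^2)$ (and their restrictions to even- and odd-degree places); hence $\Phi(u)$ is a rational function with poles only at $u=\pm q^{-1}$, and exactly extracting coefficients gives $N(\Z/2\Z,\mathfrak n)=2(q^{\mathfrak n}-q^{\mathfrak n-2})$ for even $\mathfrak n>2$, $2q^2$ for $\mathfrak n=2$, and $0$ for odd $\mathfrak n$; inserting one extra ramified factor $\frac{q^{-\deg v_0}}{1+q^{-\deg v_0}}$ at $v_0$ shifts the pole and produces the stated $\frac{1-q^{-2}}{1+q^{-\deg v_0}}q^{\mathfrak n-\deg v_0}+O_q(1)$. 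The main obstacle is the analytic bookkeeping behind the third and fourth steps above: isolating the order-$(\ell-1)$ pole at $u=q^{-1}$, proving that every remaining singularity in $|u|\le q^{-1/2}$ lies on the circle $|u|=q^{-1}$ (so that it only feeds the secondary terms and not the error), and evaluating the analytic factor at $u=q^{-1}$ precisely enough to pin down $C_\ell$ and the $c_v$; in the $\mathcal V$-restricted case one must additionally disentangle how the split/inert conditions interact with the congruence characters $\psi$, which is exactly the point at which the refinement of Wright's method from \cite{woodabelian} is invoked.
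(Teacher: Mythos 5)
Your high-level strategy is sound and closely parallels the paper's, but you take the \emph{Kummer-theoretic} rather than the \emph{class-field-theoretic} parametrization. The paper works with continuous homomorphisms $\phi\colon \pi_\infty^\Z\times\prod_v\mathcal O_v^\times\to\Z/\ell\Z$ (Propositions~\ref{unram}--\ref{localmaps}), derives the compatibility condition $\sum_v\phi_v(g_v)\deg v\equiv 0\pmod\ell$, and uses $\ell$-power reciprocity \eqref{reciprocity} to convert the ``bad-direction'' symbols $\chi_{v,\ell}(v_0)$ into Dirichlet characters $\chi_{v_0,\ell}(v)$ in Lemma~\ref{lem-generalM}. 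Your route instead parametrizes extensions by $f\in K^\times/(K^\times)^\ell$ and detects $v_0$-splitting directly via $(f/v_0)_\ell=\prod_P(P/v_0)_\ell^{a_P}$, which is already in the ``good direction''; this sidesteps the explicit reciprocity step for finite places, which is a genuine simplification. (Note that the paper comments in Section~\ref{outline} that the Kummer approach, used by \cite{CDO} over $\Q$, ``is not done in the current literature'' for relative densities, so carrying your plan through would actually be a small novelty.) Once you reach the Euler products, the analysis --- the decomposition into $\mathcal A(s)$-like and $\mathcal B(s)$-like pieces, Lemma~\ref{chantalmagictrivial}, the Tauberian Theorem~\ref{thm:Tauberian}, and the telescoping identity $\prod_{j=1}^{\ell-2}\bigl(1-\frac{jt^2}{(1+t)(1+jt)}\bigr)=\frac{1+(\ell-1)t}{(1+t)^{\ell-1}}$ to evaluate $C_\ell$ --- is essentially identical to the paper's.

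That said, you explicitly defer the parts that are the real content of the proof, and a couple of your claims need repair. First, you only characterize split/inert at finite $v_0$; the place $v_\infty$ can lie in $\mathcal V$, and there the split condition is \emph{not} a single power-residue symbol but the conjunction $\ell\mid\deg f$ and $b=0$, which couples the $\Z/\ell\Z$-congruence character with the $\F_q^\times/(\F_q^\times)^\ell$-coordinate. The paper handles this by treating $v_\infty$ separately in Proposition~\ref{localmaps} and verifying via \eqref{eq:crazyreciprocitygen} that the formulas unify. Second, the interaction you flag between the congruence character $\psi$ (detecting unramification at $\infty$) and the residue-symbol characters detecting split/inert --- which you correctly identify as ``the main obstacle'' --- is exactly where the work is: you need the analogue of the three-index sum over $j,(k_1,\dots,k_n),r$ in the paper's $\mathcal F_S$ and $\mathcal F_{\mathcal V_R,\mathcal V_S\subset\mathcal V_U}$, the cancellation in $\sum_r\rho_\ell^{-rk\deg v_0}$, the holomorphy of the $(k_1,\dots,k_n)\neq 0$ terms up to $\mathrm{Re}(s)=\tfrac12$ via Lemmas~\ref{Lem-Dirichlet}--\ref{lem-generalM}, and the inclusion-exclusion over $\tilde{\mathcal V}_I\subset\mathcal V_I$ to isolate the inert condition. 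Third, the evaluation $\lim_{u\to q^{-1}}(1-qu)^{\ell-1}\Phi(u)=\prod_v(1-q^{-\deg v})^{\ell-1}(1+(\ell-1)q^{-\deg v})$ is off by the factor $(1-q^{-1})^{-(\ell-1)}$ coming from the other factor $(1-u)^{-(\ell-1)}$ of $Z_K(u)^{\ell-1}$; the final constant $C_\ell$ still comes out correctly after accounting for this, but as written the step does not. Finally, your description of $c_v$ for split/inert as ``the ratio of the prescribed $v$-factor to $1+(\ell-1)u^{\deg v}$'' only yields the factor $\frac{1}{1+(\ell-1)q^{-\deg v}}$; the $\frac1\ell$ (resp.\ $\frac{\ell-1}{\ell}$) comes from the character average detecting split vs.\ inert, not from a local Euler factor, and needs to be derived from the multi-character sum. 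In short: a viable, slightly different route, but the hard combinatorics of the restricted count --- the very content of the second displayed formula in the theorem --- is acknowledged rather than done.
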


We prove Theorem \ref{thm:main} by using class field theory to show that counting $\Z/\ell\Z$ extensions of $\F_q(X)$ is equivalent to counting continuous
homomorphisms  of the id\`{e}le class group of $\F_q(X)$ to  $\Z/\ell \Z$. This
is the method implemented by \cite{Wr1989} for general abelian
extensions over function fields
and number fields, and
also in some recent work of the last author \cite{woodabelian} that finds probabilities of various splitting types in abelian extensions of number fields.
The idea of obtaining statistics for the families of curves over finite fields by considering the family of function field extensions attached to those curves was also used by Wood in \cite{Wo2012} and by Thorne and Xiong in \cite{ThXI} for the family of trigonal curves (corresponding to non-Galois cubic extensions of $\F_q(X)$).

We record below a special case of this result which will be needed in the
applications described in Section \ref{outline}.
The following corollary has a necessary ingredient for proving such results, namely, the
explicit dependence of each of the coefficients of the polynomial $P_{\mathcal{V}_R,\mathcal{V}_S, \mathcal{V}_I }(X) $ with respect to the splitting/ramification conditions to ensure enough
cancellation in the relative densities for the split and inert primes. More
corollaries of this type can be extracted from the proof
of Theorem   \ref{thm:main} if needed for other applications.

\begin{cor}
   \label{thm:specialcase} Let $v \in \mathcal{V}_K$ be a place, let
$\epsilon \in  \{$ramified, split, inert$\}$,
   and let
                                     $N(\Z/ \ell \Z, \od, v, \epsilon)$ be
the number of extensions of $\F_q(X)$ with Galois group $\Z/\ell \Z$  such that the degree of the conductor is equal to $\od$ and with prescribed behavior $\epsilon$ at the
   place $v.$
   Then,
   \begin{eqnarray*}
   N(\Z/ \ell \Z, \od, v, \textrm{ramified}) &=& \frac{(\ell-1)q^{-\deg v}}{1+(\ell-1)q^{-\deg v}}C_\ell q^\od P_R(\od) + O
\left( q^{\left( \frac{1}{2} + \varepsilon \right) \od} \right), \\
    N(\Z/\ell \Z , \od, v, \textrm{split}) &=& \frac{1}{\ell(1+(\ell-1)q^{-\deg v})}C_\ell q^\od P_S(\od)
    + O \left( q^{\left( \frac{1}{2} + \varepsilon \right) \od} \right), \\
    N(\Z/ \ell \Z, \od, v, \textrm{inert}) &=& \frac{1}{\ell(1+(\ell-1)q^{-\deg v})}C_\ell q^\od P_I(\od) + O \left( q^{\left( \frac{1}{2} + \varepsilon \right) \od} \right) ,
   \end{eqnarray*}
   where $C_\ell$ is the non-zero constant defined by \eqref{constantC0},
  $P_R(X)$ and $P_S(X) \in \R[X]$ are monic polynomials of degree $\ell-2$ and $P_I(X)=(\ell-1)P_S(X)$.

 When $\ell=2$, we obtain a better error term for the ramified case
\[N(\Z/2\Z, \od, v, \mbox{ramified})=\frac{(1-q^{-2})q^{-\deg v}}{1+q^{-\deg v}} q^{\od}+O_q(1).\]
  \end{cor}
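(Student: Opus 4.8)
The plan is to deduce Corollary~\ref{thm:specialcase} from Theorem~\ref{thm:main} by specializing the sets $\mathcal V_R, \mathcal V_S, \mathcal V_I$ to contain the single place $v$. Concretely, for $\epsilon = \textrm{ramified}$ one takes $\mathcal V_R = \{v\}$, $\mathcal V_S = \mathcal V_I = \emptyset$; for $\epsilon = \textrm{split}$ one takes $\mathcal V_S = \{v\}$ and the others empty; for $\epsilon = \textrm{inert}$ one takes $\mathcal V_I = \{v\}$ and the others empty. In each case $\mathcal V = \{v\}$, so the product $\prod_{v \in \mathcal V} c_v$ reduces to the single factor $c_v$, whose value is read off from the three-way case distinction in Theorem~\ref{thm:main}: $(\ell-1)q^{-\deg v}/(1+(\ell-1)q^{-\deg v})$ in the ramified case, $1/(\ell(1+(\ell-1)q^{-\deg v}))$ in the split case, and $(\ell-1)/(\ell(1+(\ell-1)q^{-\deg v}))$ in the inert case. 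Setting $P_R := P_{\{v\},\emptyset,\emptyset}$, $P_S := P_{\emptyset,\{v\},\emptyset}$, $P_I := P_{\emptyset,\emptyset,\{v\}}$ immediately gives the stated asymptotics with the claimed constants and monic degree-$(\ell-2)$ polynomials, absorbing the error term $O(q^{(1/2+\varepsilon)\od})$ verbatim.

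The one genuine assertion that is not a pure specialization is the relation $P_I(X) = (\ell-1)P_S(X)$. The natural way to see this is to compare the contributions of a split versus an inert condition at $v$ in the explicit class-field-theoretic Dirichlet series from the proof of Theorem~\ref{thm:main}. At a place $v$ with $q^{\deg v} \equiv 1 \pmod \ell$ (automatic under $q \equiv 1 \pmod \ell$), a $\Z/\ell\Z$-extension is split at $v$ iff the local component of the corresponding idèle-class character is trivial at $v$, and inert iff it is a nontrivial character of the residue field $\F_{q^{\deg v}}^\times$ composed with the norm — there being $\ell - 1$ such nontrivial possibilities versus exactly $1$ trivial one. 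Since the local Euler factor at $v$ entering the Dirichlet series treats all $\ell-1$ nontrivial characters symmetrically (they contribute the same power of the conductor, namely $q^{-\deg v}$ to the leading normalization, because each is tamely ramified of conductor exponent one... no — in the \emph{inert} case the extension is unramified at $v$, so the conductor is untouched), the generating function for the inert-at-$v$ count is exactly $\ell-1$ times that for the split-at-$v$ count. Hence the full polynomials agree up to the factor $\ell-1$, not merely their leading coefficients. I would make this precise by isolating, in the Euler product expression for \eqref{first-GS} restricted by the splitting condition, the single local factor at $v$ and observing it is multiplied by $1$ (split) versus $\ell-1$ (inert) while everything else — including the pole structure that produces $P(\od)$ — is unchanged.

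For the final displayed sentence, the $\ell = 2$ sharpening of the ramified case, the plan is again to specialize: Theorem~\ref{thm:main} already records the exact count $N(\Z/2\Z, \od, v_0, \textrm{ramified}) = \frac{(1-q^{-2})}{1+q^{-\deg v_0}} q^{\od - \deg v_0} + O_q(1)$. One just rewrites $q^{\od - \deg v_0} = q^{-\deg v_0} q^{\od}$ to match the format $\frac{(1-q^{-2})q^{-\deg v}}{1+q^{-\deg v}} q^{\od} + O_q(1)$, and notes this is consistent with the general formula (here $C_2 = 1-q^{-2}$ since the product over $j$ from $1$ to $\ell-2 = 0$ is empty, $P_R$ is the empty product hence $\equiv 1$, and $(\ell-1)q^{-\deg v}/(1+(\ell-1)q^{-\deg v}) = q^{-\deg v}/(1+q^{-\deg v})$), but with the error term improved from $O(q^{(1/2+\varepsilon)\od})$ to $O_q(1)$ because for $\ell = 2$ the Dirichlet series is rational with an explicitly computable pole and no secondary terms of order $q^{\od/2}$.

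I expect the only real obstacle to be the identity $P_I = (\ell-1)P_S$ as polynomials (rather than just a leading-term statement): it requires that the split-versus-inert distinction at $v$ factors cleanly out of the Dirichlet series without disturbing any of the lower-order poles, which in turn relies on the inert condition being unramified at $v$ and the $\ell-1$ nontrivial local characters being interchangeable. All other steps are bookkeeping: substituting singleton sets into Theorem~\ref{thm:main} and simplifying $\prod_{v \in \mathcal V} c_v$ to a single factor.
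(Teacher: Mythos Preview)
Your overall strategy of specializing Theorem~\ref{thm:main} to singleton sets $\mathcal V$ is sound, and indeed the paper explicitly remarks that Corollary~\ref{thm:specialcase} can be deduced from Theorem~\ref{thm:main}. (The paper nonetheless gives a direct proof of the Corollary first, as a warm-up introducing the generating series $\mathcal F_R,\mathcal F_U,\mathcal F_S,\mathcal F_I$ that are later generalized.) The $\ell=2$ sharpening is handled exactly as you say.

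The gap is in your argument for $P_I=(\ell-1)P_S$. This relation does \emph{not} follow from the statement of Theorem~\ref{thm:main}, which only asserts that $P_{\emptyset,\{v\},\emptyset}$ and $P_{\emptyset,\emptyset,\{v\}}$ are each monic of degree $\ell-2$; one must look inside the proof. Your proposed mechanism---that the split/inert distinction at $v$ is encoded by a single local Euler factor, with one choice (trivial local character) for split and $\ell-1$ symmetric choices for inert, so that $\mathcal F_I=(\ell-1)\mathcal F_S$ as Dirichlet series---is incorrect. Whether an unramified $v$ is split or inert is determined by the Frobenius $\varphi_v(\pi_v)$, and by Proposition~\ref{localmaps} this value is forced by the global reciprocity condition (it equals $-\psi_\infty(\pi_\infty^{-\deg v})-\sum_{w\neq v,v_\infty}\phi_w(\pi_v)$); it is \emph{not} a free local choice that can be pulled out as an Euler factor. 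In fact $\mathcal F_I\neq(\ell-1)\mathcal F_S$.

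What the paper does instead is write
\[
\mathcal F_S(s)=\tfrac{1}{\ell}\mathcal F_U(s)+\text{(sum of terms }\mathcal M_{j,k}(s;v,\text{split})\text{)},\qquad
\mathcal F_I(s)=\tfrac{\ell-1}{\ell}\mathcal F_U(s)-\text{(same }\mathcal M_{j,k}\text{ terms)},
\]
where the $\mathcal M_{j,k}$ are Euler products twisted by nontrivial Dirichlet characters $\chi_{v,\ell}^k$. By Lemma~\ref{lem-generalM} these twisted pieces are analytic for $\re(s)>\tfrac{1}{2(\ell-1)}$, hence contribute only to the $O(q^{(1/2+\varepsilon)\od})$ error and not to the residues on $\re(s)=\tfrac{1}{\ell-1}$. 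Consequently the full polynomials $P_S$ and $P_I$ arise from $\tfrac{1}{\ell}\mathcal F_U$ and $\tfrac{\ell-1}{\ell}\mathcal F_U$ respectively, which gives $P_I=(\ell-1)P_S$ coefficient by coefficient. Your last paragraph correctly flags that this is the crux; the missing ingredient is precisely the analyticity of the character-twisted pieces, not any symmetry among ``local characters at $v$.''
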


Finally, we state our main result for the distribution of points on $\ell$-cyclic
covers of $\mathbb P^1$ of fixed genus that can be obtained by a simple application of Theorem
\ref{thm:main}.  This distribution is given in terms of the same random variables obtained in \cite{bdfl} for each irreducible component of the moduli space  $\mathcal H_{g, \ell}$.

\begin{thm} \label{thm-HG}
Let $\mathcal H_{g, \ell}$ be the moduli space of $\Z/\ell \Z$ Galois covers of
$\mathbb P^1$ of genus $g$. Then, as $g \rightarrow \infty$, \[\frac{|\{C \in
\mathcal H_{g, \ell}(\F_q) :  \# C(\F_q) = m \}|'}{|\mathcal H_{g, \ell}(\F_q)|'} =
 \Prob \left(X_1+ \dots X_{q+1} = m \right) + O_\ell \left( \frac{1}{g} \right), \]
where the $X_i$'s are independent identically distributed random variables such that
\[X_i = \begin{cases} 0 & \textrm{ with probability } \displaystyle \frac{(\ell
-1)q}{\ell(q +\ell -1)},\\
&\\
1 & \textrm{ with probability } \displaystyle \frac{\ell -1}{q+ \ell -1 }, \\
&\\
\ell & \textrm{ with probability }  \displaystyle  \frac{q}{\ell(q +\ell -1)}.
\\
\end{cases}\]
In the formula, as usual, the $'$ notation means that the covers $C$ on the moduli space are counted with the usual wights $1/|\mathrm{Aut}(C)|$.
\end{thm}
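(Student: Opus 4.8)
The plan is to reduce the point-counting problem over $\mathcal H_{g,\ell}$ to the extension-counting statistics of Theorem \ref{thm:main}, via the dictionary between $\Z/\ell\Z$-covers of $\PP^1$ and cyclic degree-$\ell$ extensions of $K=\F_q(X)$ that is set up in Section \ref{section-equivalence}. First I would recall that a cover $C\in\mathcal H_{g,\ell}(\F_q)$ corresponds (up to the appropriate weighting by $|\Aut(C)|$, which is exactly what the $'$-notation absorbs on both sides) to a $\Z/\ell\Z$-extension $L/K$, and that the genus $g$ is pinned down by the degree $\od$ of the conductor through the conductor-discriminant formula, i.e. $2g-2 = (\ell-1)(\od-2)$, so ``$g\to\infty$'' is equivalent to ``$\od\to\infty$''. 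The number of $\F_q$-points of $C$ decomposes as $\#C(\F_q)=\sum_{v:\,\deg v=1}(\text{number of places of }L\text{ above }v)$, and for each of the $q+1$ rational places $v$ of $\PP^1$ this local contribution is $0$ (if $v$ is inert), $1$ (if $v$ is ramified), or $\ell$ (if $v$ splits completely). Hence $\#C(\F_q)=m$ is the event that a sum of $q+1$ local terms, each valued in $\{0,1,\ell\}$, equals $m$.

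The core step is then to show these $q+1$ local behaviors become independent and acquire the claimed marginal distribution \eqref{RV} as $\od\to\infty$. For a fixed finite set of rational places $\mathcal V=\{v_1,\dots,v_{q+1}\}$ and a prescribed splitting type $\epsilon=(\epsilon_1,\dots,\epsilon_{q+1})\in\{\text{ramified},\text{split},\text{inert}\}^{q+1}$, Theorem \ref{thm:main} gives
\begin{equation*}
\frac{N(\Z/\ell\Z,\od;\mathcal V_R,\mathcal V_S,\mathcal V_I)}{N(\Z/\ell\Z,\od)} = \Bigl(\prod_{v\in\mathcal V}c_v\Bigr)\cdot\frac{P_{\mathcal V_R,\mathcal V_S,\mathcal V_I}(\od)}{P(\od)} + O\bigl(q^{-(1/2-\varepsilon)\od}\bigr),
\end{equation*}
and since both polynomials are monic of degree $\ell-2$, the ratio $P_{\mathcal V_R,\mathcal V_S,\mathcal V_I}(\od)/P(\od)$ is $1+O(1/\od)=1+O_\ell(1/g)$ (this is where one uses the explicit control on the lower-order coefficients of $P_{\mathcal V_R,\mathcal V_S,\mathcal V_I}$ recorded in Corollary \ref{thm:specialcase} and its generalization). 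Therefore the relative density of covers with splitting type $\epsilon$ at $\mathcal V$ is $\prod_{v\in\mathcal V}c_v + O_\ell(1/g)$, which factors over $v\in\mathcal V$ — this is the independence — and for a rational place $\deg v=1$ the values of $c_v$ in Theorem \ref{thm:main} specialize to
\begin{equation*}
c_v = \frac{(\ell-1)q^{-1}}{1+(\ell-1)q^{-1}}=\frac{\ell-1}{q+\ell-1},\quad \frac{1}{\ell(1+(\ell-1)q^{-1})}=\frac{q}{\ell(q+\ell-1)},\quad \frac{\ell-1}{\ell(1+(\ell-1)q^{-1})}=\frac{(\ell-1)q}{\ell(q+\ell-1)},
\end{equation*}
for the ramified, split, and inert cases respectively — precisely matching $\Prob(X_i=1)$, $\Prob(X_i=\ell)$, $\Prob(X_i=0)$ in \eqref{RV}.

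Finally I would assemble the answer: summing the relative densities over all splitting types $\epsilon$ with $\sum_i(\text{local value of }\epsilon_i)=m$ gives
\begin{equation*}
\frac{|\{C\in\mathcal H_{g,\ell}(\F_q):\#C(\F_q)=m\}|'}{|\mathcal H_{g,\ell}(\F_q)|'} = \sum_{\substack{\epsilon\in\{R,S,I\}^{q+1}\\ \mathrm{val}(\epsilon)=m}}\Bigl(\prod_{i=1}^{q+1}c_{v_i}+O_\ell(1/g)\Bigr) = \Prob(X_1+\dots+X_{q+1}=m)+O_\ell(1/g),
\end{equation*}
the number of terms in the sum being bounded in terms of $q$ and $\ell$ only, so the error terms aggregate harmlessly. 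The main obstacle — and the reason this is not purely formal — is making the passage from Theorem \ref{thm:main}'s statistics for a \emph{fixed} conductor degree $\od$ to statistics over the moduli space $\mathcal H_{g,\ell}$ genuinely clean: one must verify that the conductor-degree-to-genus correspondence is a bijection at the level of the weighted counts (so that no Jacobian-type factor intervenes), confirm that Theorem \ref{thm:main} is being applied with $\mathcal V$ consisting of \emph{all} $q+1$ degree-one places simultaneously (not just one at a time as in Corollary \ref{thm:specialcase}), and track that the $O(1/\od)$ from the polynomial ratios dominates the exponentially small error, yielding the stated $O_\ell(1/g)$; the independence itself is then immediate from the product structure $\prod_{v\in\mathcal V}c_v$.
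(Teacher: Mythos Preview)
Your proposal is correct and follows essentially the same route as the paper's proof in Section \ref{section-equivalence}: translate covers to $\Z/\ell\Z$-extensions, express $\#C(\F_q)$ as a sum over the $q+1$ degree-one places of local contributions in $\{0,1,\ell\}$, and then invoke the relative-density statement of Theorem \ref{thm:main} (equivalently Theorem \ref{thm-general-statement}) with $\mathcal V$ equal to the full set of rational places to obtain the product $\prod_v c_v$ up to $O(1/\od)=O_\ell(1/g)$. One small slip: the genus--conductor relation should read $2g=(\ell-1)(\od-2)$ (your version $2g-2=(\ell-1)(\od-2)$ is off by $2$), but this does not affect the argument.
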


\subsection{Relation to previous work and outline of the paper} \label{outline}

As we mentioned above, using class field theory to count Abelian extensions of global fields
was first used in the elegant note of Cohn \cite{Co54} for the particular case $K=\Q$ and $G = \Z / 3 \Z$, and was vastly generalized
by Wright  in his influential paper on the subject \cite{Wr1989}. The main idea is
to write the generating series \eqref{first-GS}
as a finite linear combination of Euler products whose factors are relatively simple.
In  \cite{Wr1989}, Wright gets an asymptotic for all Abelian extensions of a global
field. In the present paper, we are interested in a special case of his work, namely $K=\F_q(X)$ and
$G = \Z/\ell \Z$, but we need results which are completely explicit because of the applications to
statistics of curves over finite fields, which is our main goal.
We then need the values of the constants
$c(k, G)$ in \cite[Theorem I.3]{Wr1989}, which are not determined by Wright. He manages by an
ingenious argument to show that they are non-zero, and that the density exists. In this paper, we compute
these constants explicitly and show that they fit the count of \cite{bdfl}, ignoring the error terms (see Section \ref{section-equivalence}). For the case of number field extensions, the explicit computation of the constants $c(k, G)$ from \cite[Theorem I.3]{Wr1989} was addressed by Cohen, Diaz y Diaz and Olivier in \cite{CDO}, again for the case of cyclic extensions of prime degree. Their techniques are completely different from the class field theory approach of \cite{Co54, Wr1989}, as they use Kummer theory. The authors of \cite{CDO} do not compute the relative densities for the splitting conditions with their approach, and to our knowledge, this is not done in the current literature with the Kummer theory approach.

The relative densities for general Abelian extensions of number fields were computed explicitly by the last author of the present paper in \cite{woodabelian} using the class field theory approach, with an emphasis on characterizing the extensions of $\Q$ where the independence between the various primes in the relative densities is false. In some ways, the present paper is a function field analogue of \cite{woodabelian},
but of course the application  for counting points on curves is different. There are also many differences between function fields and number fields,  because of
the special role of the place at infinity, and the fact that all residue fields have the same characteristic. There are also different analytic issues between number fields and function fields.

Among other possible applications of counting function fields extensions and curves over finite fields, one can think of statistics for the distribution of points over finite fields $\F_{q^n}$ as $n$ varies (but the family of covers is still defined over $\F_q$),
and the one-level density for the family, as studied by Rudnick in \cite{Ru-hyper} for hyperelliptic curves. Those applications are also suitable for an approach using the relative densities of the function field extensions corresponding to
the family of curves.
For this particular application, one needs all secondary terms which can be obtained by computing
the residues of all the poles in the line $\re(s)=1/(\ell-1)$ of the generating series \eqref{first-GS}, i.e. the polynomials $P(\od)$ appearing in Theorem \ref{thm:main} and Corollary \ref{thm:specialcase},
and not only the main term given by the highest order pole. This would provide enough cancelation between the different relative densities appearing
in the explicit formulas relating the point counting to the zeroes of the zeta functions of the curves.
The quality of the results obtained for statistics for the number of points over $\F_{q^n}$
(namely how large  $n$ is with respect to the
genus of the family) and the one-level density (namely the support of the Fourier transform) is influenced by the error term, which is obtained from the Tauberian theorem
after considering the poles as discussed. One important feature of the error term is its dependence on the
degree of the primes with splitting/ramification conditions. This dependence raises delicate and nontrivial issues, as there are cyclic $\ell$-covers where
the zeroes of the zeta functions are related and their contribution to the error term is large, but they should not influence the average over the family as they are exceptional.
These questions are not addressed in this paper, but are being
considered in work in progress.

Finally, we say a few words about our restriction to $q \equiv 1 \mod \ell$.
We are interested in counting points on the curves $Y^\ell = F(X)$ defined over $\F_q$. If $q \not\equiv 1 \mod \ell$, the point counting on the curves is trivial as every element in $\F_q$ is an $\ell$th power (in a unique way). Also, if $q \not\equiv  1 \mod \ell$, then the extension corresponding to the curve $Y^\ell = F(X)$ is not a cyclic extension,
and  the cyclic extensions of  $\F_q(X)$ of degree $\ell$ do not come from those curves in the case where $q \not\equiv 1 \mod \ell$.

We now outline the organization of the paper.
In Section \ref{section:setup}, we establish the notation and use class field theory to translate the counting of extensions to the counting of maps of the id\`{e}le
class group. We also prove a general form of the Tauberian theorem over function
fields that we  need to analyze the
Dirichlet series  for cyclic extensions of $\F_q(X)$ that is a slight generalization of a result in \cite{rosen}. In Section \ref{section-Dirichlet}, we define
Dirichlet characters over $\F_q(X)$, and we prove
analytic properties of some Dirichlet series that appear in future
sections.
In Section \ref{section:caseell}, we prove our main result, Theorem \ref{thm:main}. In Subsection
 \ref{section:case2}, we look at the particular case of $\ell=2$ where we can get the exact result for the
the total number of quadratic extensions with fixed conductor, and the case with one prescribed ramified place
with a better error term without using the Tauberian theorem. Finally, we
explain in Section \ref{section-equivalence} how to obtain statistics for the
point counting over the moduli space of cyclic $\ell$-covers, and we compare our
results with those of \cite{bdfl}.

\section{Background and setup}\label{section:setup}

In this section, we set up notation and recall basic facts from Galois theory
and class field theory that allow us to rephrase our problem in terms of
counting continuous homomorphisms from the id\`ele class group of a function
field to a cyclic group of prime order.

 Fix a prime  $\ell$. Throughout the paper $\F_q$ denotes a finite field with $q
\equiv 1 \pmod \ell$ elements and $K=\F_q(X)$ is the rational function field
over $\F_q$.

\subsection{Notation}  We will denote by $G_K$  the absolute Galois group of $K,$ that is the Galois
group $\Gal(K^{\textrm{sep}}/K)$ of the separable closure of $K.$ Let $\CCD_K^+$
be the set of effective divisors of $K.$ For each place $v$ of $K$ we will use
the standard notations $K_v$ for the completion at $v,$ $\mathcal O_v$ for the
local ring, $\kappa_v$ for the residue field, and $\pi_v$ for a uniformizer at
$v$ which we choose to be monic. Recall that the degree of a place $v$ is given by $\deg v = [\kappa_v:\F_q]$
and its norm is $Nv = q^{\deg v}$, the number of elements in the residue field
$\kappa_v.$ Of course, for a place $v_f$ associated to an irreducible polynomial
$f \in \F_q[X]$,
 we have that $\deg v = \deg f$. For the place at infinity associated with
the uniformizer $\pi_\infty = 1/X$, we have that $\deg v_\infty = 1$.

\subsection{From covers to field extensions}\label{sec:coverext}
A $\Z/\ell\Z$ cover is a pair $(C, \pi)$ where $C\stackrel{\pi}{\to}\PP^1$ is an $\ell$-degree cover map defined over $K.$
Each $\Z/\ell\Z$ cover $(C,\pi)$ together with an isomorphism $\Z/\ell\Z \to \Aut(C/\PP^1)$ corresponds to a Galois extension $L$ of $K = \F_q(X)$ together with a distinguished isomorphism $\Gal(L/K) \stackrel{\tau}{\to} \Z/\ell \Z$.
We refer to such extensions as $\ell$-cyclic extensions.  The genus of the curve $C$ is related to the discriminant  $\disc (L/K)$ via the Riemann-Hurwitz formula (see for instance \cite[Theorem 7.16]{rosen}),
\[ 2 g_C -2 = \ell  (2 g_{\PP^1} -2) + \deg \disc (L/K).\]

Since $q \equiv 1 \pmod \ell,$ there is no wild ramification and each place $v$ of $K$ either ramifies completely, splits completely or is inert. Thus \begin{equation}\label{eq:disccond}\disc (L/K) = \sum_{v \textrm{ ramified in } L} (\ell -1) v\end{equation}
and  \[2g_C =   (\ell -1) \left[-2 + \sum_{v \textrm{ ramified in }L} \deg v\right],\] where the sum is taken over the places $v$ of $K$ that ramify in $L.$

\subsection{From field extensions to maps}\label{subsection:maps} Our translation from counting extensions to counting maps has two steps.
 First, by Galois theory, $\ell$-cyclic extensions $L/K$ with a distinguished isomorphism $\Gal(L/K) \stackrel{\tau}{\to} \Z/\ell \Z$ are in one-to-one correspondence with the
surjective continuous homomorphisms $G_K \to \Z/\ell \Z$ from the absolute Galois group
of $K$ to $\Z/\ell\Z.$  By class field theory, the maps $G_K \to \Z/\ell \Z$ are
in one-to-one correspondence with the maps $\mathbf J_K/K^\times \to \Z/\ell \Z$
from the  id\`ele class group of  $K$ to $\Z/\ell \Z.$

Since $K$ contains all $\ell$th roots of unity and $(\ell,q)=1$ each $\ell$-cyclic Galois cover is of the form $K(\sqrt[\ell]{\beta}), \dots, K(\sqrt[\ell]{\beta^{\ell -1}})$ by Kummer theory. These correspond to  $\ell -1$ unramified surjective continuous homomorphisms $\mathbf J_K/K^\times \to \Z/\ell \Z$, one for each generator of $\Z/\ell \Z$. There is also the trivial map, which is also unramified everywhere. In terms of extensions, this corresponds to the $K$-algebra $K^\ell$. In terms of covers of $\PP^1$, this corresponds to the split cover that
consists of $\ell$ disjoint copies of $\PP^1.$

Thus an $\ell$-cyclic extension $L/K$ of given discriminant corresponds to a nontrivial
continuous  homomorphism $\varphi: {\mathbf J}_K/K^\times \to \Z/\ell \Z.$

 Let $\phi$ be a map 
\begin{eqnarray}\label{countmaps}
 \phi: \pi_\infty ^\Z \times \prod_v \co_v^\times \to  \Z/\ell\Z
\end{eqnarray}
which is trivial on the embedding of $\F_q^\times$ in $\prod_v \co_v^\times$.  Here $\pi_\infty ^\Z$ is the free abelian group generated by $\pi_\infty$ and the product is taken over all the places $v$ including the place at infinity\footnote{Unless otherwise specified, we will continue using the convention  that the sums and products over $v$ denote all places including the place at infinity.}.

 \begin{rem}We remark that $\phi$ and $\varphi$ are two different maps.  \end{rem}
 
 The maps $\phi$ and $\varphi$ are closely related via the following proposition whose proof can be found in \cite[Page 90, Section 7]{hayes}.

\begin{prop}\label{unram}
 Let
\[
 \phi=\psi_\infty \otimes_v \phi_v: \pi_\infty ^\Z \times \prod_v \co_v^\times \to  \Z/\ell\Z.
\]
Then\begin{enumerate}
\item   If $\phi$ is trivial on   the embedding of $\F_q^\times$ in $\prod_v \co_v^\times$  it has a
unique extension to a map
\[
\varphi:{\mathbf J}_K/K^\times \to \Z/\ell\Z.
\]
  \item
A place $v$ of $K$ ramifies in an $\ell$-cyclic extension $L$  corresponding to $\phi$
 if and only if the map $\phi_{v}$ is nontrivial on $\co_v^\times$.
\end{enumerate}
\end{prop}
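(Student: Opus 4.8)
The plan is to reduce both assertions to the structure of the id\`ele class group of $K=\F_q(X)$ together with local class field theory. The crucial input is that $\PP^1_{\F_q}$ has a place of degree one (namely $v_\infty$, with $\deg v_\infty = 1$) and trivial degree-zero Picard group, so that every id\`ele is a product of a principal id\`ele, a power of $\pi_\infty$, and an element of $\prod_v\co_v^\times$; that is,
\[
 \mathbf J_K \;=\; K^\times\cdot\Bigl(\pi_\infty^\Z\times\prod_v\co_v^\times\Bigr),
\]
where $\pi_\infty^\Z$ is the group of id\`eles supported at $v_\infty$ with entry a power of $\pi_\infty$. I would prove this by attaching to an id\`ele $a=(a_v)$ the divisor $D_a=\sum_v v(a_v)\,v$: since $\deg v_\infty=1$, the divisor $D_a-(\deg D_a)v_\infty$ has degree zero, hence is the divisor of some $f\in K^\times$, and then $a\,f^{-1}\pi_\infty^{-\deg D_a}$ has trivial valuation at every place, i.e.\ lies in $\prod_v\co_v^\times$. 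A short argument shows $K^\times\cap(\pi_\infty^\Z\times\prod_v\co_v^\times)=\F_q^\times$, since a rational function that is a unit at every finite place is a constant and constants are units at $v_\infty$ too. Hence the inclusion induces an isomorphism of topological groups
\[
 \Bigl(\pi_\infty^\Z\times\prod_v\co_v^\times\Bigr)/\F_q^\times\;\xrightarrow{\ \sim\ }\;\mathbf J_K/K^\times,
\]
continuity and openness being automatic from the compactness of $\prod_v\co_v^\times$ and the finiteness of $\F_q^\times$. Part (1) is then immediate: a continuous homomorphism $\phi$ on $\pi_\infty^\Z\times\prod_v\co_v^\times$ that kills the diagonal copy of $\F_q^\times$ descends to the quotient, which the displayed isomorphism identifies with $\mathbf J_K/K^\times$, giving the desired $\varphi$; uniqueness holds because $\pi_\infty^\Z\times\prod_v\co_v^\times$ surjects onto $\mathbf J_K/K^\times$, so a homomorphism out of the latter is determined by its pullback.

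For part (2), I would use the compatibility of the global Artin map $\mathbf J_K/K^\times\to\Gal(L/K)$ with the local Artin maps $K_v^\times\to\Gal(L_w/K_v)$. Since $(\ell,q)=1$ there is no wild ramification, and local class field theory says that $L_w/K_v$ is unramified precisely when the local component $\varphi_v:=\varphi\circ(K_v^\times\hookrightarrow\mathbf J_K\to\mathbf J_K/K^\times)$ is trivial on $\co_v^\times$; hence $v$ ramifies in $L$ iff $\varphi_v|_{\co_v^\times}\neq 1$. It remains to match $\varphi_v|_{\co_v^\times}$ with $\phi_v$. Under the identification above, the composite $\co_v^\times\hookrightarrow K_v^\times\hookrightarrow\mathbf J_K\to\mathbf J_K/K^\times$ sends $u$ to the class of the id\`ele with entry $u$ at $v$ and $1$ elsewhere, and evaluating the descended $\varphi$ on this class, using that $\phi=\psi_\infty\otimes_v\phi_v$ is the product of its local components, returns $\phi_v(u)$ (at $v=v_\infty$ the relevant component is $\phi_\infty$ on $\co_\infty^\times$, not $\psi_\infty$). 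Thus $\varphi_v|_{\co_v^\times}=\phi_v$, and the claim follows.

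The divisor bookkeeping in the decomposition of $\mathbf J_K$ and the topological verifications are routine. The step I expect to require the most care is the local--global compatibility of reciprocity: justifying that ramification of $L/K$ at $v$ is detected purely by the restriction of the global character to the local units $\co_v^\times\subseteq K_v^\times$, and that this restriction equals the local factor $\phi_v$ of the input datum $\phi$ --- it is precisely the absence of wild ramification for $q\equiv 1\pmod\ell$ that makes ``ramified'' equivalent to ``$\phi_v$ nontrivial on $\co_v^\times$'' with nothing further to check. All of this is carried out in Hayes \cite[Section 7]{hayes}, whose argument we follow.
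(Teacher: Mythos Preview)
Your proposal is correct and follows exactly the approach the paper invokes: the paper does not give its own proof of this proposition but simply cites \cite[Page 90, Section 7]{hayes}, and your argument is precisely the standard Hayes argument --- the id\`ele decomposition $\mathbf J_K = K^\times\cdot(\pi_\infty^\Z\times\prod_v\co_v^\times)$ coming from $\mathrm{Pic}^0(\PP^1)=0$ and $\deg v_\infty=1$, together with local--global compatibility of the Artin map. You have supplied more detail than the paper itself provides, but the content and the reference coincide.
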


Thus the conductor of the map $\phi$ is
$$\cond(\phi) = \displaystyle\sum_{v \textrm{ ramified in } L}  v,$$
which is also the conductor of the extension $L/K$. As there is no wild ramification, the
discriminant-conductor formula (see for instance \cite[Section 12.6]{Sal}) yields
 \begin{eqnarray} \label{disccond1}
\disc(L/K) = (\ell-1) \cond(L/K) = (\ell -1) \cond
(\phi).
\end{eqnarray}

In section \ref{section:caseell} we prove our main results by working with $\phi$. For the remainder of this section we explicate the relationship between $\phi$ and  the corresponding $L/K$. First we   address the global compatibility condition needed for $\phi$ to  extend to a function $\varphi$ defined over ${\bf J}_K/K^\times$ and hence correspond to an extension $L$. Namely, that $\phi$ must be trivial on the embedding of $\F_q^\times$ in $\prod_v \co_v^\times$.  Fix $\mu
\in \F_q,$ a generator of the multiplicative group $\F_q^\times.$  Clearly $\phi$ is trivial on $\F_q^\times$ if and only if $\phi(1, \mu, \mu, \dots)=0$ where the first component in the infinite vector corresponds to the identity element in the free abelian group $ \pi_\infty ^\Z$.

For each place
$v$ of $K,$ we note that the map $\phi_v : \mathcal O_v^\times \to \Z/\ell\Z$ factors through
$\mathcal O_v^\times / (1+\pi_v\mathcal O_v)\cong \left(\mathcal O_v / (\pi_v) \right)^\times.$ Recall that $\deg v=[\mathcal O_v/( \pi_v ): \mathbb F_q]$ and thus
\[\mathcal O_v/( \pi_v )\cong \F_{q^{\deg v}}.\]
For each $v$, fix a choice of $g_v \in \mathcal O_v$  whose image generates $\mathcal O_v^\times / (1+\pi_v\mathcal O_v)\cong (\F_{q^{\deg v}})^\times$ and such that
\[\mu =g_v^{\frac{q^{\deg v}-1}{q-1}}.\]
Then\begin{align*}
\phi(1, \mu, \mu, \dots)&=\phi((1,\mu, 1, 1, \dots)(1,1, \mu, 1, \dots)\cdots)\\
&=\phi(1,\mu, 1, 1, \dots)+\phi(1, 1, \mu, 1, \dots)+\dots\\
&=\sum_v \phi_v(\mu)
=\sum_v\phi_v\left(g_v^{\frac{q^{\deg v}-1}{q-1}}\right)=\sum_v \left(  \frac{q^{\deg v}-1}{q-1} \right)\phi_v(g_v).
\end{align*}
We note that $\frac{q^{\deg v}-1}{q-1}=q^{\deg v-1}+q^{\deg v-2}+\dots +q+1
\equiv \deg v \pmod \ell$ since $q \equiv 1 \pmod \ell$.
We have now proved the following proposition.

\begin{prop}\label{globalccresult}
For each $v$ let $g_v \in \mathcal{O}_v$ as defined above. A map $\phi: \pi_\infty ^\Z \times \prod_v \co_v^\times \to  \Z/\ell\Z$ is trivial on the embedding of  $\F_q^\times$ in $\prod_v \co_v^\times$  if and only if 

\begin{equation}\label{eq:modell}
\sum_{v \in \textrm{Cond}(\phi)}  \phi_v(g_v) \, \deg v \equiv 0 \pmod \ell.
\end{equation}

\end{prop}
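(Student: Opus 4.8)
The plan is to compute $\phi(1,\mu,\mu,\dots)$ directly, since Proposition~\ref{unram} tells us $\phi$ extends to a $\varphi$ on ${\bf J}_K/K^\times$ precisely when $\phi$ kills the diagonal copy of $\F_q^\times$, and $\mu$ generates that group. First I would write the tuple $(1,\mu,\mu,\dots)$ (with a $1$ in the $\pi_\infty^\Z$ slot and $\mu$ in every $\co_v^\times$ slot) as a (possibly infinite) product of tuples each of which is supported at a single place, and use additivity of the homomorphism $\phi$ to reduce to $\sum_v \phi_v(\mu)$. One must check this infinite sum makes sense: since $\phi_v$ is trivial on $1+\pi_v\co_v$ and $\mu \in \F_q^\times \subset \co_v^\times$ always reduces to a unit, $\phi_v(\mu)$ is determined by the image of $\mu$ in $\kappa_v^\times$; and for all but finitely many $v$ (those not in $\cond(\phi)$) the map $\phi_v$ is trivial on all of $\co_v^\times$ by Proposition~\ref{unram}(2), so the sum is really finite, ranging over $v \in \cond(\phi)$.

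Next, for each $v$ I would express $\mu$ in terms of the chosen generator $g_v$ of $\co_v^\times/(1+\pi_v\co_v) \cong \F_{q^{\deg v}}^\times$. By the defining property of $g_v$ we have $\mu = g_v^{(q^{\deg v}-1)/(q-1)}$, so by additivity again $\phi_v(\mu) = \frac{q^{\deg v}-1}{q-1}\,\phi_v(g_v)$ in $\Z/\ell\Z$. The only remaining arithmetic point is the congruence $\frac{q^{\deg v}-1}{q-1} = 1 + q + \cdots + q^{\deg v - 1} \equiv \deg v \pmod \ell$, which is immediate from $q \equiv 1 \pmod \ell$. Combining, $\phi(1,\mu,\mu,\dots) = \sum_{v \in \cond(\phi)} (\deg v)\,\phi_v(g_v)$ in $\Z/\ell\Z$, and this vanishes if and only if the displayed congruence \eqref{eq:modell} holds. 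Since $\phi$ is trivial on $\F_q^\times$ if and only if it is trivial on the single generator $\mu$ (equivalently $\phi(1,\mu,\mu,\dots)=0$), this finishes the proof.

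There is essentially no hard step here; the statement is a bookkeeping consequence of additivity of $\phi$ together with the structure of the local unit groups. The one place to be slightly careful is the justification that the infinite product/sum is legitimate — i.e. that only finitely many terms are nonzero — which follows from part (2) of Proposition~\ref{unram} and the finiteness of $\cond(\phi)$; this should be stated explicitly rather than glossed over. Everything else (factoring the idele-like tuple place by place, the identity $\mu = g_v^{(q^{\deg v}-1)/(q-1)}$, the congruence for the geometric sum) is routine, and indeed the computation has already been carried out in the paragraph preceding the proposition, so the proof can simply point to that display.
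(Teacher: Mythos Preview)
Your proposal is correct and follows essentially the same approach as the paper: reduce to evaluating $\phi(1,\mu,\mu,\dots)=\sum_v\phi_v(\mu)$, rewrite $\mu=g_v^{(q^{\deg v}-1)/(q-1)}$, and use $q\equiv 1\pmod\ell$ to get the congruence with $\deg v$. Your explicit remark that the sum is finite because $\phi_v$ is trivial off $\cond(\phi)$ is a small clarification the paper leaves implicit, but otherwise the arguments are identical.
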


Thus, in order to count the extensions $L/K$ with prescribed
splitting/ramification conditions at places $v$ of $K=\F_q(X)$, it is necessary and sufficient to count
the maps $\phi$ as in \eqref{countmaps} satisfying the global compatibility condition \eqref{eq:modell} with corresponding conditions at places $v$ of $K$, which we describe below.
By Proposition \ref{unram}  a place $v$ is ramified if and only if  $\phi_v$ is nontrivial on $\co_v^\times$. Now we deal with the remaining two cases, inert and completely split.

\begin{prop}\label{localmaps}
Let $$\phi=\psi_\infty \otimes_v \phi_v: \pi_\infty ^\Z \times \prod_v \co_v^\times \to  \Z/\ell\Z$$ and trivial on the embedding of  $\F_q^\times$ in $\prod_v \co_v^\times$.  Let $$\varphi:{\mathbf J}_K/K^\times \to \Z/\ell\Z$$ be the unique extension of $\phi$ to the id\`ele class group of $K$ and let $L$ be the $\ell$-cyclic extension of $K$  corresponding to $\varphi$. Let $v_0$ be a place of $K$ different from $v_\infty$. Then we have the following: 

\begin{enumerate}
\item
$v=v_0$ or $v_\infty$ ramifies in  $L$  
 if and only if the map $\phi_{v}$ is nontrivial on $\co_v^\times$,
\item \label{test}  $v_0$ splits completely in $L$ if and only if $\phi_{v_0}(\mathcal O_{v_0}^\times)=0$  and
\begin{align}\label{eqn:magical}
\psi_\infty(\pi_\infty^{-\deg v_0})+\sum_{v\neq v_0, v_\infty}\phi_v(\pi_{v_0})=0,
\end{align}
\item  $v_0$ is inert in $L$ if and only if $\phi_{v_0}(\mathcal O_{v_0}^\times)=0$  and
\[
\psi_\infty(\pi_\infty^{-\deg v_0})+\sum_{v\neq v_0, v_\infty}\phi_v(\pi_{v_0})\neq 0,
\]
\item $v_\infty$ splits
completely in $L$ if and only if $\phi_{v_\infty}(\co_{v_\infty}^\times) = 0$ and $\psi_\infty(\pi_\infty) = 0,$
\item $v_{\infty} $ is inert in $L$ if and only if $\phi_{v_\infty}(\co_{v_\infty}^\times) = 0$ and  $\psi_\infty(\pi_\infty)  \neq 0.$
\end{enumerate}
\end{prop}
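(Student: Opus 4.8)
The plan is to deduce the whole statement from one computation: the value of the global Artin map on an idèle supported at a single place. Part~(1) is just a restatement of Proposition~\ref{unram}(2): since $q\equiv 1\pmod\ell$ there is no wild ramification, so each place either ramifies completely or is unramified, and $v$ ramifies exactly when $\phi_v$ is nontrivial on $\co_v^\times$. For the four remaining parts I would first record the abstract reciprocity input. Write $\tau:\Gal(L/K)\xrightarrow{\sim}\Z/\ell\Z$ for the distinguished isomorphism, so that $\varphi=\tau\circ(\text{Artin map})$. For an unramified place $v$ the decomposition group is cyclic, generated by $\Frob_v$, and it is either trivial — so $v$ splits completely — or, because $\ell$ is prime, all of $\Z/\ell\Z$ — so $v$ is inert. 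Under the global Artin map the class of the idèle $\iota_v(\pi_v)$ (equal to $\pi_v$ in $K_v^\times$ and to $1$ at every other place) goes to $\Frob_v$. Hence, once I know $v$ is unramified, i.e. $\phi_v(\co_v^\times)=0$ by Proposition~\ref{unram}(2), I get: $v$ splits completely iff $\varphi(\iota_v(\pi_v))=0$, and $v$ is inert iff $\varphi(\iota_v(\pi_v))\ne 0$.

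The heart of the argument is then to express $\varphi(\iota_v(\pi_v))$ in terms of $\psi_\infty$ and the $\phi_w$. Since $\varphi$ is the unique extension of $\phi=\psi_\infty\otimes_w\phi_w$ (Proposition~\ref{unram}(1)) and $\mathbf J_K=K^\times\cdot(\pi_\infty^{\Z}\times\prod_w\co_w^\times)$, it suffices to move $\iota_v(\pi_v)$ into $\pi_\infty^{\Z}\times\prod_w\co_w^\times$ modulo $K^\times$ and apply $\phi$ coordinatewise. For $v=v_0\ne v_\infty$ with $\pi_{v_0}=f$ the monic irreducible polynomial cutting out $v_0$, I multiply $\iota_{v_0}(f)$ by the inverse of the principal idèle $(f,f,\dots)$: the result has component $1$ at $v_0$, $f^{-1}\in\co_w^\times$ at each finite $w\ne v_0$, and at $v_\infty$ it equals $\pi_\infty^{\deg v_0}u_\infty$ with $u_\infty\in 1+\pi_\infty\co_{v_\infty}$ (because $f$ is monic of degree $\deg v_0$ and $\pi_\infty=1/X$). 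Applying $\phi$, using $\phi_{v_0}(1)=0$ and that $\phi_{v_\infty}$ kills $1+\pi_\infty\co_{v_\infty}$, gives
\[\varphi(\iota_{v_0}(\pi_{v_0}))=\psi_\infty(\pi_\infty^{\deg v_0})-\sum_{w\ne v_0,v_\infty}\phi_w(\pi_{v_0})=-\Bigl(\psi_\infty(\pi_\infty^{-\deg v_0})+\sum_{w\ne v_0,v_\infty}\phi_w(\pi_{v_0})\Bigr),\]
so its vanishing is exactly \eqref{eqn:magical}; this yields (2) and (3). For $v=v_\infty$, the idèle $\iota_{v_\infty}(\pi_\infty)$ already lies in $\pi_\infty^{\Z}\times\prod_w\co_w^\times$ (it is $\pi_\infty$ in the free part and $1$ in every unit factor), so $\varphi(\iota_{v_\infty}(\pi_\infty))=\psi_\infty(\pi_\infty)$, which gives (4) and (5).

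I expect the main obstacle to be normalization bookkeeping rather than anything deep: one must pin down the normalization of the reciprocity map (arithmetic vs.\ geometric Frobenius, hence $\iota_v(\pi_v)\mapsto\Frob_v$ versus $\Frob_v^{-1}$) and carry it consistently — though since everything is tested against $0$ in $\Z/\ell\Z$ an overall sign is harmless — and, more delicately, one must handle the place at infinity carefully, writing $f\in K^\times$ inside $K_{v_\infty}^\times$ as a power of $\pi_\infty$ times a one-unit and checking that $\phi_{v_\infty}$ annihilates the one-unit part (this is what makes the $v_\infty$-term collapse to the single summand $\psi_\infty(\pi_\infty^{-\deg v_0})$). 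A minor point to watch is that the identification $\iota_v(\pi_v)\mapsto\Frob_v$ is invoked only at unramified $v$, which is legitimate because the condition $\phi_v(\co_v^\times)=0$ is imposed first.
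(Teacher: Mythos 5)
Your proof is correct and follows essentially the same route as the paper's: both reduce each splitting statement to evaluating $\varphi$ on the Frobenius idèle $\iota_v(\pi_v)$, and both carry out the $v_0\ne v_\infty$ case by using triviality of $\varphi$ on the principal idèle $(\pi_{v_0},\pi_{v_0},\dots)$ to push that evaluation into $\pi_\infty^{\Z}\times\prod_w\co_w^\times$, with the monicity of $\pi_{v_0}$ ensuring the $v_\infty$-unit factor is a one-unit that $\phi_{v_\infty}$ kills. The only difference is presentational (you multiply by the inverse principal idèle; the paper expands the principal idèle additively), and you are in fact slightly more careful at $v_\infty$ by isolating the one-unit $u_\infty$ explicitly.
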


 \begin{proof}
 Let $\varphi_v$ be the composition of $\varphi$ with the canonical map $K_v^\times\rightarrow {\mathbf J}_K \rightarrow {\mathbf J}_K/K^\times$.
In the case where $v$ is unramified, the map $\varphi_v$ is trivial on $\co_v^\times$ and
therefore its image is dictated by $\varphi_v(\pi_v^\Z)\in \Z/\ell\Z.$ Thus the image  is a subgroup of a simple abelian group and we have only
two possibilities: either $\varphi_v$ is surjective or $\varphi_v$ is trivial.
Since $\Frob_{v}$ corresponds to the vector with $\pi_v$ in the $v$ place and $1$ elsewhere under the correspondence from class field theory, $v$ splits  if and only if $\varphi_v(\pi_{v})=0$.

Now, let $v_0 \neq v_\infty$ be unramified. For the purpose of this particular discussion we denote elements in the id\`eles by vectors with the infinite component first 
and the $v_0$ component second. Under this notation   $v_0$ splits  if and only if $\varphi(1,\pi_{v_0}, 1,1, \dots)=0$. 
Since $\varphi$ is trivial on $K^\times$, we know that
\begin{align*}
0=&\varphi(\pi_{v_0},\pi_{v_0}, \dots ) =\varphi(\pi_{v_0}, 1, \dots )+ \varphi(1, \pi_{v_0}, 1, \dots )+ \varphi(1,1,\pi_{v_0},1,\dots )+\dots
\\
=&\varphi(\pi_\infty^{-\deg v_0}, 1, \dots ) +\varphi(\pi_{v_0}\pi_\infty^{\deg v_0}, 1,\dots ) + \varphi(1, \pi_{v_0}, 1, \dots )+
\\
&+\varphi(1,1,\pi_{v_0},1,\dots )+\varphi(1,1,1,\pi_{v_0},1, \dots)+\dots.
\end{align*}

Since we chose  $\pi_{v_0}$ to be monic    and $\val_{\infty}(\pi_{v_0}\pi_{\infty}^{\deg v_0})=0$,  we have that $\varphi(\pi_{v_0}\pi_\infty^{\deg v_0}, 1,\dots)=0$. Denoting by $\varphi_{v_0}(\pi_{v_0})$ the term $\varphi(1,\pi_{v_0}, 1, \dots, 1)$ where we recall the convention that the second place 
corresponds to $v_0$, we obtain,
\[
\psi_\infty(\pi_\infty^{-\deg v_0})+\varphi_{v_0}(\pi_{v_0})+\sum_{v\neq v_0, v_\infty}\phi_v(\pi_{v_0})=0.
\]
Since $v_0$ splits if and only if $\varphi_{v_0}(\pi_{v_0})=0$, we see that:

\begin{itemize}
\item $v_0$ splits if and only if $\phi_{v_0}(\mathcal O_{v_0}^\times)=0$  and
\begin{align}
\psi_\infty(\pi_\infty^{-\deg v_0})+\sum_{v\neq v_0, v_\infty}\phi_v(\pi_{v_0})=0.
\end{align}
\item  $v_0$ is inert if and only if $\phi_{v_0}(\mathcal O_{v_0}^\times)=0$  and
\[
\psi_\infty(\pi_\infty^{-\deg v_0})+\sum_{v\neq v_0, v_\infty}\phi_v(\pi_{v_0})\neq 0.
\]
\end{itemize}

If $v = v_\infty$ we can read the splitting behavior from $\phi(\pi_\infty, 1,
1, \dots).$ Namely, we have that $v_\infty \notin \cond(\phi)$ if and only if
$\phi_{v_\infty}(\co_{v_\infty}^\times) = 0.$  Therefore:
\begin{itemize}
\item $v_\infty$ splits
completely in $L$ when $\phi_{v_\infty}(\co_{v_\infty}^\times) = 0$ and $\psi_\infty(\pi_\infty) = 0,$
\item $v_{\infty} $ is inert when $\phi_{v_\infty}(\co_{v_\infty}^\times) = 0$ and  $\psi_\infty(\pi_\infty)  \neq 0.$
\end{itemize}

\end{proof}

\subsection{Generating series and the Tauberian Theorem}
\label{section:tauberian}

As in previous work, our strategy is to make use of the Tauberian theorem to deduce
an asymptotic formula for the number of field extensions $L/K$ with discriminant
of degree $\od$ from the analytic properties of the generating series
$$\sum_{\Gal(L/K) \cong \Z/\ell\Z}
\mathfrak{D}(L/K)^{-s},$$
where $\mathfrak{D}(L/K)$ is the norm of the discriminant $\disc(L/K)$.
As mentioned above, since we are dealing with cyclic extension of prime degree
$\ell$, the conductor-discriminant relation gives
$$\disc(L/K) = (\ell-1) \cond(L/K) \iff \mathfrak{D}(L/K) = N \left(
\cond(L/K) \right)^{\ell-1},$$
and it is more natural to write the generating series as
$$ \sum_{\Gal(L/K) \cong \Z/\ell\Z} \mathfrak{D}(L/K)^{-s}
:=
\sum_{f \in \mathcal{D}_K^+} \frac{a_\ell(f)}{N\!f^{(\ell-1)s}},$$
where $a_\ell(f)$ is the number of cyclic extensions of degree $\ell$ of
$K=\F_q(X)$ with conductor $f$.
We will then extend this analysis to study the extensions $L$ that are counted by $N(\Z/\ell \Z,
\od; {\mathcal{V}}_R, {\mathcal{V}}_S, {\mathcal V}_I)$ as defined in Section \ref{section-intro} by
understanding the generating series
\[
\sum_{{\Gal(L/K) \cong \Z/\ell\Z}\atop{ {\mathcal{V}}_R, {\mathcal{V}}_S, {\mathcal V}_I}}
\mathfrak{D}(L/K)^{-s},
\]
where the sum now runs over the cyclic extensions of degree $\ell$ that satisfy
all of prescribed splitting/ramification conditions at the places of ${\mathcal{V}}_R\cup{\mathcal{V}}_S\cup {\mathcal V}_I$. Again, we will write this Dirichlet series as
\begin{eqnarray*}
\sum_{{\Gal(L/K) \cong \Z/\ell\Z}\atop{ {\mathcal{V}}_R, {\mathcal{V}}_S, {\mathcal V}_I}}
\mathfrak{D}(L/K)^{-s} :=
\sum_{f \in \mathcal{D}_K^+}  \frac{a_\ell(f, \mathcal{V}_R, \mathcal{V}_S,
\mathcal{V}_I)}{N\!f^{(\ell-1)s}},
\end{eqnarray*}
where $a_\ell(f, \mathcal{V}_R, \mathcal{V}_S,
\mathcal{V}_I)$ is the number of cyclic extensions
of degree $\ell$ of $K=\F_q(X)$ with conductor $f$ that satisfy all of the prescribed
splitting/ramification conditions.

We now state and prove
the version of the Tauberian theorem needed to analyze the Dirichlet series
above. More generally, let $k$ be a positive integer,
let $a : \CCD_K^+ \to \C$, and $\CF(s)$ be the Dirichlet series
 $$
 \CF(s) =\sum_{f \in \CCD_K^+} \frac{a(f)}{N\!f^{ks}}.$$

We need a Tauberian theorem that will allow us to evaluate $\sum_{\deg f = \od}
a(f)$ in the situation when the half-plane of absolute convergence is
$\re(s) > 1/k$ for some positive integer $k$,  and the function $\CF(s)$ has a finite number of poles (of arbitrary multiplicities) on the line $\re(s) =
1/k$. This is a slight  generalization of \cite[Theorem 17.1]{rosen}.

Since the function $q^{-ks}$, and therefore $\CF(s)$, are periodic with
period ${2 \pi i}/{( k\log{q})}$, nothing is
lost by confining our attention to the region
\begin{eqnarray}\label{B_k}B_k = \left\{ s \in \C :  - \frac{\pi i}{k\log{q}} \le \im(s) < \frac{\pi
i}{k\log{q}} \right\}. \end{eqnarray}
We will always suppose that $s$ is confined to the region
$B_k$.

\begin{thm}\label{thm:Tauberian} Let $k$ be a positive integer, and let $0 <
\delta < 1/k$. Let $a : \CCD_K^+ \to \C$, and suppose that the Dirichlet series
 $$
\mathcal{F}(s) = \sum_{f \in \CCD_K^+} \frac{a(f)}{N\!f^{ks}}$$
converges
absolutely for $\re(s) > 1/k$, and is holomorphic on $\left\{s \in B_k : \re(s)
\geq \delta \right\}$ except for a finite number of poles on the line $\re(s) = 1/k$.
Let $u=q^{-ks}$ and define $F(u) = \CF(s).$
Then,
$$ \sum_{\deg f= \od} a(f)  = - \sum_{|u|=q^{-1}} \Res_{u}
\frac{F(u)}{u^{\od+1}} + O \left( q^{\delta k \od} M \right),$$
where $$M = \max_{|u| = q^{- k \delta}} |F(u)| = \max_{\re(s)=\delta} |\mathcal{F}(s)|.$$
\end{thm}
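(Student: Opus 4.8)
The plan is to pass to the variable $u = q^{-ks}$, recognize $\sum_{\deg f = \od} a(f)$ as a Taylor coefficient of $F(u) = \mathcal{F}(s)$ at $u=0$, and extract it by a Cauchy integral whose contour we push outward past the poles on $|u| = q^{-1}$. First I would record the dictionary between the two pictures. Since $N\!f^{-ks} = u^{\deg f}$, we have $F(u) = \sum_{n \geq 0} b_n u^n$ with $b_n := \sum_{\deg f = n} a(f)$, and absolute convergence of $\mathcal{F}(s)$ for $\re(s) > 1/k$ says exactly that this power series converges on the disc $|u| < q^{-1}$, so $F$ is holomorphic there. Because $\mathcal{F}$ has period $2\pi i/(k\log q)$, the map $s \mapsto u = q^{-ks}$ identifies the strip $B_k$ with $\C^\times$ and $F$ is a well-defined single-valued function on it; under this identification $\re(s) = 1/k$ becomes $|u| = q^{-1}$, $\re(s) = \delta$ becomes $|u| = q^{-k\delta}$, and $q^{-1} < q^{-k\delta}$ since $\delta < 1/k$. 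The hypothesis on $\mathcal{F}$ then translates to: $F$ is holomorphic on $\{\,|u| \leq q^{-k\delta}\,\}$ except for finitely many poles, of arbitrary order, all lying on the circle $|u| = q^{-1}$; in particular $F$ is holomorphic on $|u| = q^{-k\delta}$, where $\max_{|u|=q^{-k\delta}}|F(u)| = \max_{\re(s)=\delta}|\mathcal{F}(s)| = M$.

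Next, for any $r_1 < q^{-1}$ the Cauchy integral formula for Taylor coefficients gives
\[
\sum_{\deg f = \od} a(f) = b_\od = \frac{1}{2\pi i}\oint_{|u| = r_1} \frac{F(u)}{u^{\od+1}}\, du.
\]
I would then enlarge the contour from $|u| = r_1$ to $|u| = q^{-k\delta}$. On the annulus $r_1 < |u| < q^{-k\delta}$ the only singularities of $F(u)\,u^{-\od-1}$ are the poles of $F$ on $|u| = q^{-1}$, and $F$ is holomorphic on the two bounding circles, so the residue theorem yields
\[
b_\od = \frac{1}{2\pi i}\oint_{|u| = q^{-k\delta}} \frac{F(u)}{u^{\od+1}}\, du \;-\; \sum_{|u_0| = q^{-1}} \Res_{u = u_0} \frac{F(u)}{u^{\od+1}}.
\]
Finally the remaining integral is estimated trivially: its contour has length $2\pi q^{-k\delta}$ and $|F(u)/u^{\od+1}| \leq M\,q^{k\delta(\od+1)}$ on it, so this term is bounded by $M q^{k\delta\od}$, which is the claimed error term. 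Reindexing $b_\od = \sum_{\deg f = \od} a(f)$ completes the proof.

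The argument is essentially bookkeeping, and the only point that requires care is the passage between the strip $B_k$ in the $s$-plane and the annulus in the $u$-plane: one must check that periodicity makes $F$ single-valued on $\C^\times$, that the finitely many poles of $\mathcal{F}$ on $\re(s) = 1/k$ become finitely many isolated poles of $F$ on $|u| = q^{-1}$, and that $F$ is holomorphic on the circle $|u| = q^{-k\delta}$ so that both the residue theorem and the trivial bound apply there. This is precisely where the hypotheses $\delta < 1/k$ and holomorphy of $\mathcal{F}$ on $\{\re(s) \geq \delta\}$ off the line $\re(s) = 1/k$ are used, and it is the only difference from the simple-pole statement \cite[Theorem 17.1]{rosen}, in which the residue sum collapses to a single term; here the use of $\Res$ handles poles of arbitrary multiplicity automatically.
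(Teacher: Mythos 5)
Your proof is correct and takes essentially the same approach as the paper: change of variable to $u=q^{-ks}$, recognize the sum as a Taylor coefficient, and shift the Cauchy contour from inside $|u|=q^{-1}$ out to $|u|=q^{-k\delta}$, picking up the residues and bounding the outer integral trivially by $Mq^{k\delta\od}$. The only cosmetic difference is that the paper phrases the residue theorem using a pair of contours $C_\eta$ (inner, clockwise) and $C_\delta$ (outer, counterclockwise) and writes their combined integral as the residue sum, whereas you orient both counterclockwise and take the difference; these are the same computation.
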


\begin{proof}
With the change of variable $u = q^{-ks}$, we have that
$$
F(u) = \sum_{\od=0}^{\infty} \left( \sum_{\deg{f} = \od} a(f) \right) u^{\od},
$$
and by hypothesis,  $F(u)$ is a meromorphic function on the disk $\{u \in \C:
|u|\leq q^{-k \delta}\}$, except for finitely many poles with $|u|=1/q$. Let
$C_\delta=\{u \in \C : |u|=q^{-k\delta}\}$,
oriented counterclockwise. Choose any
$\eta>1$ and let  $C_\eta=\{u \in \C : |u|=q^{-\eta}\}$, oriented clockwise.
Notice that $\frac{F(u)}{u^{\od+1}}$ is a meromorphic function between the two
circles
$C_\eta$ and $C_\delta$ with finitely many poles at $|u|=1/q$.
Thus, by the Cauchy's integral formula,
\[\frac{1}{2\pi i} \oint_{C_\delta+C_\eta} \frac{F(u)}{u^{\od+1}} d u =
\sum_{|u| = q^{-1}} \Res_{u} \frac{F(u)}{u^{\od+1}}.\]
Since $q^{-\eta} < q^{-1}$, using the power series expansion of $F(u)$ around $u=0$,
we have that
\[\frac{1}{2\pi i} \oint_{C_\eta} \frac{F(u)}{u^{\od+1}} d u=-\sum_{\deg f =
\od} a(f).\]
Therefore, we obtain
\[\sum_{\deg f =\od} a(f)= - \sum_{|u| = q^{-1}} \Res_{u} \frac{F(u)}{u^{\od+1}}
 + \frac{1}{2\pi i} \oint_{C_\delta} \frac{F(u)}{u^{\od+1}} d u.\]
Let $M$ be the maximum of $|F(u)|$ over $C_\delta$. Then
\[\left| \frac{1}{2\pi i} \oint_{C_\delta} \frac{F(u)}{u^{\od+1}} d u \right|
\leq M q^{\delta k \od},\]
which proves the result.
\end{proof}

\section{Dirichlet characters and $L$-functions}\label{characters}
\label{section-Dirichlet}

In this section, we define $\ell$th-power residue symbols over $\F_q[X]$. We refer the
reader to \cite{moreno, rosen} for details. We then study the convergence properties of some auxiliary functions built out of the $\ell$th-power residue symbols
that will be used in the proofs of our main results.

Recall that $\ell$ is a prime  such that $q \equiv 1 \pmod \ell$. Thus $\F_q^\times$ contains the $\ell$th roots of unity. In particular, $b_\ell=\mu^\frac{q-1}{\ell}$ is one of these roots where $\mu$ is a fixed generator of $\F_q^\times$.
For a place $v\not = v_\infty$ of $K$, we also let  $v=v(X) \in \F_q[X]$ represent the monic irreducible 
polynomial in $K^\times$ corresponding to $v$. We
define the $\ell$th power residue symbol as follows. Let
\[
\left( \frac {\cdot}{v}\right)_\ell : (\F_q[X]/v(X))^\times \rightarrow \F_q^\times
\]
be defined by
\[
\left( \frac {f}{v}\right)_\ell\equiv f^{\frac{Nv-1}{\ell}} \pmod v.
\]
In other words, the $\ell$th power residue symbol is given by an $\ell$th root of unity.

Recall that the choice of $\mu$ made in Section \ref{subsection:maps} determined for each place $v$ a generator $g_v$  of
 \[
(\mathcal{O}_v/(\pi_v))^\times\cong (\F_q[X]/(v(X)))^\times \cong (\F_{q^{\deg v}})^\times
\]
such that $\mu=g_v^{\frac{q^{\deg v}-1}{q-1}}$. We have
\[g_v^{\frac{q^{\deg v}-1}{\ell}} = \left(g_v^{\frac{q^{\deg v}-1}{q-1}}\right)^{\frac{q-1}{\ell}}=\mu^{\frac{q-1}{\ell}}=b_\ell.\]
By the definition of the $\ell$th power symbol,
\[\left(\frac{g_v}{v}\right)_\ell\equiv b_\ell \pmod v.\]
We let $\sigma$ be an $\ell$-order character from $\F_q^\times\to\mathbb C^\times$.
Then,
\[\chi_{v,\ell} := \sigma\circ  \left( \frac {\cdot}{v}\right)_\ell\]
is a Dirichlet character $\chi: \F_q[X] \rightarrow \C^\times$ of modulus $v$, where we
define $\chi_{v,\ell}(f(x)) = 0$ if $v(x)$ divides $f(x)$.

For the infinite place $v_\infty$, we further define
\begin{equation}\label{chi(vinfty)}
\chi_{v, \ell}(v_\infty)=\begin{cases} 1 & \deg v \equiv 0 \pmod \ell,
\\
0 & \deg v \not \equiv 0 \pmod \ell.
\end{cases}
\end{equation}

For $\chi$ a nontrivial Dirichlet character, we denote by  $L(s, \chi)$ the
Dirichlet  $L$-function
\[L(s, \chi) = \sum_{\substack{F \in \F_q[X] \\F \text{ monic } }} \frac{\chi(F)}{|F|^s}\]
where $F$ varies over the monic polynomials of $\F_q[X]$, and
by $L^*(s, \chi)$ the completed $L$-function that includes the place at infinity. For a Dirichlet character modulo a monic polynomial $v$, we have that
$$L^*(s, \chi) = (1-q^{-s})^{-\lambda_v} L(s, \chi),$$
where $\lambda_v$ is 1 if $\deg v \equiv 0 \pmod \ell$, and 0 otherwise.

Then, for $\chi$ nontrivial, we remark that both $L(s, \chi)$ and $L^*(s, \chi)$ are analytic and non-zero for $\re(s) > 1/2$.

By $\ell$-power reciprocity, we can write this character as
\begin{eqnarray} \label{reciprocity}
\chi_{v, \ell}(v_0)= \sigma\circ \left( \frac{v_0}{v} \right)_\ell = \sigma\left( \left( (-1)^{(q-1)/\ell} \right)^{\deg{v_0} \deg{v}}  \left( \frac{v}{v_0} \right)_\ell\right) = \Psi_{v_0, \ell}(v) \chi_{v_0, \ell}(v),\end{eqnarray}
where $\chi_{v_0,\ell}(v)$ is the Dirichlet character modulo $v_0$ defined above, and $\Psi_{v_0, \ell}(v)$ depends only on the degree of $v$.

If $v=v_\infty$, let $a_n$ be the principal coefficient of $f$. Then we
define
\[\chi_{v_\infty, \ell}(f) :=\left\{\begin{array}{cc}\sigma(a_n) & \deg f \equiv 0 \pmod \ell,\\ 0 & \deg f \not \equiv 0 \pmod \ell. \end{array}\right.\]
We note that the above definition together with \eqref{chi(vinfty)} agree with $\ell$-power reciprocity in the following way
\begin{equation}\label{eq:crazyreciprocitygen}
\chi_{v,\ell}(v_\infty)=\left((-1)^{(q-1)/\ell}\right)^{\deg v} \chi_{v_\infty,\ell}(v)=\left\{\begin{array}{cc}1 & \deg v \equiv 0 \pmod \ell,\\ 0 & \deg v \not \equiv 0 \pmod \ell. \end{array}\right.
\end{equation}
We have used that $v$ is a monic polynomial, which implies that $\chi_{v_\infty,\ell}(v)=1$ when $\ell \mid \deg v$, that $\deg v_\infty=1$, and that $\left((-1)^{(q-1)/\ell}\right)^{\deg v}=1$ when $\ell\mid \deg v$ and $q$ odd, and is trivially 1 when $q$  is even since then we have even characteristic.

Finally, we remark that by the above, the Kronecker symbol codifies ramification in extensions in the usual way.
Let $f \in \F_q[X]$ (not necessarily monic). Then,
\[  \chi_{v, \ell}(f) = \begin{cases}
             1 & v \mbox{ splits in } K(\sqrt[\ell]{f}),\\
             \xi_\ell^k, \; \mbox{for some $1 \leq k \leq \ell-1$} & v \mbox{ is inert in } K(\sqrt[\ell]{f}),\\
             0 & v \mbox{ ramifies in } K(\sqrt[\ell]{f}),\\
            \end{cases}\]
            where $\xi_\ell \in \C$ is a primitive $\ell$th root of $1$.

We now proceed to prove convergence results for Dirichlet series and similar functions.

\begin{lem} \label{Lem-Dirichlet}
Let $\chi$ be a nontrivial Dirichlet character and
let $\Psi$ be a function on $\F_q[X]$ such that $\Psi(F)=\Psi(G)$ when $\deg F=\deg
G$. Then
\[
L(s,\Psi \chi)=\sum_{\substack{F \in \F_q[X] \\ F \text{ monic }}} \frac{\Psi(F)\chi(F)}{|F|^s}
\]
is an analytic function on $\mathbb C$.

\end{lem}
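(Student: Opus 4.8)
The plan is to exploit the periodicity of $\Psi$ modulo $\ell$ in the degree to reduce $L(s,\Psi\chi)$ to a finite combination of $L$-functions (and their twists) that are already known to be entire. First I would write $F$ according to its degree: grouping monic polynomials of the same degree, and using $\Psi(F) = \psi(\deg F)$ for some function $\psi$ on $\Z_{\geq 0}$, we have
\[
L(s,\Psi\chi) = \sum_{n \geq 0} \psi(n) q^{-ns} \sum_{\substack{F \text{ monic} \\ \deg F = n}} \chi(F).
\]
Since $\chi = \chi_{v,\ell}$ has modulus $v$ (a monic polynomial, or the place at infinity), the inner character sum $S_\chi(n) := \sum_{\deg F = n} \chi(F)$ vanishes identically for $n \geq \deg v$ because $\chi$ is nontrivial and the residues are equidistributed over complete sets of representatives once $n \geq \deg v$. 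Hence the outer sum over $n$ is actually \emph{finite}: only the terms $0 \leq n < \deg v$ survive, and a finite sum of entire functions ($q^{-ns}$ is entire) is entire. That is the whole argument in one line, but I should be careful about exactly why $S_\chi(n) = 0$ for $n \geq \deg v$.

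The key step, then, is the vanishing $S_\chi(n) = 0$ for $n \geq \deg v$. For $v$ a monic irreducible polynomial of degree $d$, the monic polynomials of degree $n \geq d$ can be partitioned into classes according to their residue mod $v$: fixing the top $n - d$ coefficients and letting the bottom $d$ coefficients range freely runs over every residue class mod $v$ exactly once (the map from degree-$n$ monics with prescribed high-order part to $\F_q[X]/(v)$ is a bijection). Thus $S_\chi(n) = q^{n-d}\sum_{a \bmod v} \chi(a) = 0$ since $\chi$ is a nontrivial character on $(\F_q[X]/(v))^\times$ (extended by $0$), and the sum of a nontrivial character over the full additive group $\F_q[X]/(v)$ — equivalently over all residues including non-units — is zero. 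For the case $v = v_\infty$, the definition of $\chi_{v_\infty,\ell}$ depends only on the leading coefficient and the degree mod $\ell$, and a direct computation of $\sum_{\deg F = n, F \text{ monic}} \chi_{v_\infty,\ell}(F)$ shows it vanishes; actually since $\chi_{v_\infty,\ell}(F)$ depends only on $\deg F$ for monic $F$, $S_\chi(n)$ is just $q^n \chi_{v_\infty,\ell}(X^n)$, which is absorbed into redefining $\psi$, so one reduces to observing $L(s,\Psi\chi)$ with $\chi = \chi_{v_\infty,\ell}$ is literally $\sum_n \psi'(n)$ with $\psi'$ supported on $n \equiv 0 \bmod \ell$ — but this is \emph{not} entire as written, so presumably the intended reading is $\chi$ has a monic polynomial modulus, or $\Psi$ already forces convergence. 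I would state the lemma's proof cleanly for $\chi$ of monic polynomial modulus and remark that the infinite-place twist $\chi_{v,\ell}(v_\infty)$ only contributes a factor depending on $\deg F \bmod \ell$, which can be folded into $\Psi$.

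The main obstacle I anticipate is bookkeeping around the place at infinity and the completed versus uncompleted $L$-function: one must be sure that the function $\Psi$ (depending only on $\deg F$) genuinely makes the outer sum finite rather than merely convergent, and that no subtlety with the leading-coefficient character at infinity reintroduces an infinite tail. The resolution is that $\Psi$ only reindexes the already-finite sum $\sum_{n < \deg v} \psi(n) S_\chi(n) q^{-ns}$, so finiteness is preserved regardless of $\Psi$. Once the finiteness of the degree sum is established, analyticity on all of $\C$ is immediate since each $q^{-ns}$ is entire and there are finitely many terms; no Tauberian or growth estimate is needed.
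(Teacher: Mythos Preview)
Your approach is essentially the same as the paper's: group the sum by degree, pull out $\Psi$ as a constant on each degree, and use that $\sum_{\deg F = n,\, F \text{ monic}} \chi(F) = 0$ for $n$ at least the degree of the modulus (the paper simply invokes ``orthogonality relations of characters'' where you give the explicit bijection argument), concluding that the series is a polynomial in $q^{-s}$ and hence entire. Your extended discussion of the case $v = v_\infty$ is an unnecessary digression---in the lemma and its proof, $\chi$ is a Dirichlet character with a polynomial modulus, so that case does not arise.
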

\begin{proof}
Let
\[
A(n, \Psi, \chi)=
\sum_{\substack{F \in \F_q[X],\\ F \text{ monic,} \\ \deg F=n}} \Psi(F)\chi(F).
\]
Then $L(s,\Psi \chi)$ equals
\begin{equation}\label{eq:LDirichlet}
\sum_{n = 0}^\infty \frac{A(n, \Psi, \chi)}{q^{ns}}.
\end{equation}
We note that
\[
A(n, \Psi, \chi)=\Psi(G)\sum_{\substack{F \in \F_q[X],\\ F \text{ monic,} \\ \deg F=n}}\chi(F)
\]
for any polynomial $G$ of degree $n$,
and thus $A(n, \Psi, \chi)=0$ if $n$ is greater than or equal to the degree  of the  modulus of $\chi$ by the orthogonality relations of
characters. This implies that the sum in \eqref{eq:LDirichlet} is finite and therefore $L(s, \Psi \chi)$ is analytic.

\end{proof}

\begin{lem} \label{lem-generalM} Let $\xi_\ell$ be a primitive $\ell$th root of 1.  Let ${\mathcal V}_R, {\mathcal V}_S$ and ${\mathcal V}_U$ be finite subsets of places of
${\mathcal V}_K$ such that ${\mathcal V}_S = \left\{ v_1, \dots, v_n \right\} \subset {\mathcal V}_U$, and
${\mathcal V}_U \cap {\mathcal V}_R = \varnothing$.
For each $0 \leq j \leq \ell-1$, and each tuple $(k_1, \dots, k_n) \neq (0, \dots, 0)$
with $0 \leq k_i \leq \ell-1$, let
\begin{eqnarray*}
&& {\mathcal M}_{j, k_1, \dots, k_n}(s; {\mathcal V}_R, {\mathcal V}_S, {\mathcal V}_U) \\ &&:=
\prod_{v \not\in {\mathcal V}_R \cup {\mathcal V}_U} \left(1+\left(\xi_\ell^{j\deg v} \prod_{h=1}^n \chi_{v, \ell}(v_h)^{k_h} +\cdots
+\xi_\ell^{(\ell-1)j\deg v} \prod_{h=1}^n \chi_{v,\ell}(v_h)^{(\ell-1)k_h} \right)Nv^{-(\ell-1)s}\right).
\end{eqnarray*}
Then, each ${\mathcal M}_{j, k_1, \dots, k_n}(s; {\mathcal V}_R, {\mathcal V}_S, {\mathcal V}_U)$  converges absolutely for  $\re(s) > \frac{1}{\ell-1}$ and has analytic continuation to the region $\re(s) > \frac{1}{2(\ell-1)}$.
\end{lem}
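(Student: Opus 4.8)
The strategy is to expand $\mathcal M_{j,k_1,\dots,k_n}(s;\mathcal V_R,\mathcal V_S,\mathcal V_U)$ as an Euler product that looks, up to finitely many local factors, like a product of Dirichlet $L$-functions over $\F_q(X)$, and then use the known analytic behavior of those $L$-functions (analytic and nonvanishing for $\re(s)>1/2$, per the remarks in Section~\ref{section-Dirichlet}) together with Lemma~\ref{Lem-Dirichlet}. Concretely, write $\xi_\ell^{j\deg v}\prod_h\chi_{v,\ell}(v_h)^{k_h}$ as $\rho(v)$, a value of an $\ell$th root of unity depending on $v$: the factor $\xi_\ell^{j\deg v}$ depends only on $\deg v$, so it is of the $\Psi$-type in Lemma~\ref{Lem-Dirichlet}, while $\prod_h\chi_{v,\ell}(v_h)^{k_h}$ is, by the $\ell$-power reciprocity \eqref{reciprocity}, equal to $\Psi'(v)\prod_h\chi_{v_h,\ell}(v)^{k_h}$ for some degree-dependent $\Psi'$; thus $\rho(v)=\widetilde\Psi(\deg v)\,\chi(v)$ where $\chi=\prod_h\chi_{v_h,\ell}^{k_h}$ is a fixed Dirichlet character modulo $v_1\cdots v_n$, and $\widetilde\Psi$ depends only on $\deg v$. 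Since $(k_1,\dots,k_n)\neq(0,\dots,0)$ and the $v_h$ are distinct primes, $\chi$ is a \emph{nontrivial} Dirichlet character — this is the point where the hypothesis $\mathcal V_S\neq\varnothing$ (via the tuple being nonzero) is used.

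**The main computation.** Each local factor of $\mathcal M$ is
\[
1+\sum_{t=1}^{\ell-1}\rho(v)^t\,Nv^{-(\ell-1)s}
= 1 + \Bigl(\sum_{t=1}^{\ell-1}\rho(v)^t\Bigr)Nv^{-(\ell-1)s},
\]
and $\sum_{t=0}^{\ell-1}\rho(v)^t$ equals $\ell$ if $\rho(v)=1$ and $0$ otherwise, so $\sum_{t=1}^{\ell-1}\rho(v)^t\in\{-1,\ell-1\}$. Comparing this with $\prod_{t=1}^{\ell-1}(1-\rho(v)^t\,Nv^{-(\ell-1)s})^{-1}$, whose logarithm is $\sum_{t=1}^{\ell-1}\rho(v)^t Nv^{-(\ell-1)s}+O(Nv^{-2(\ell-1)\re(s)})$, I would show that
\[
\mathcal M_{j,k_1,\dots,k_n}(s;\dots)=\Bigl(\prod_{v\notin\mathcal V_R\cup\mathcal V_U}\ \prod_{t=1}^{\ell-1}L\bigl((\ell-1)s,\rho^t\bigr)_v\Bigr)\cdot H(s),
\]
where each $L((\ell-1)s,\rho^t)$ is (the completed, or after removing finitely many Euler factors at $\mathcal V_R\cup\mathcal V_U$, a finite modification of) a Dirichlet $L$-function attached to the nontrivial character $\rho^t$, analytic everywhere by Lemma~\ref{Lem-Dirichlet} (it is a polynomial in $q^{-(\ell-1)s}$), and $H(s)$ is the correction Euler product whose $v$-factor is $1+O(Nv^{-2(\ell-1)\re(s)})$. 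The absolute convergence of $\mathcal M$ for $\re(s)>\frac1{\ell-1}$ is then immediate from $\sum_v Nv^{-(\ell-1)\re(s)}<\infty$ there, and the correction product $H(s)$ converges absolutely (hence is analytic and nonzero) for $2(\ell-1)\re(s)>1$, i.e. $\re(s)>\frac1{2(\ell-1)}$. Since the $L$-factors are entire, the product $\bigl(\prod_v\prod_t L\bigr)\cdot H(s)$ is analytic on $\re(s)>\frac1{2(\ell-1)}$, giving the claimed continuation.

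**The main obstacle.** The delicate point is bookkeeping the finitely many "bad" Euler factors: the places in $\mathcal V_R\cup\mathcal V_U$ are omitted from $\mathcal M$, the places dividing the modulus $v_1\cdots v_n$ are where $\chi$ (hence some $\rho^t$) vanishes, and the infinite place $v_\infty$ needs the convention \eqref{chi(vinfty)}. Each of these contributes only finitely many factors, each a nonzero meromorphic (in fact holomorphic and nonvanishing, being a polynomial in $q^{-(\ell-1)s}$ with nonzero constant term) function on all of $\C$ after one checks that removing/adjusting them does not introduce poles or zeros in $\re(s)>\frac1{2(\ell-1)}$; this is routine but must be stated carefully. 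A second, milder subtlety is that $\rho^t$ for different $t$ may coincide or some $\rho^t$ may be trivial as a character on $\F_q[X]$ only if $t k_h\equiv 0\pmod\ell$ for all $h$, which for prime $\ell$ and $(k_1,\dots,k_n)\neq(0,\dots,0)$ forces $t\equiv0$; so for $1\le t\le\ell-1$ every $\rho^t$ is genuinely nontrivial, and Lemma~\ref{Lem-Dirichlet} applies to each. I would therefore structure the proof as: (i) rewrite the local factor via reciprocity to isolate a nontrivial Dirichlet character; (ii) factor $\mathcal M$ as $\bigl(\prod_t L((\ell-1)s,\rho^t)\bigr)H(s)$ up to finitely many explicit local corrections; (iii) invoke Lemma~\ref{Lem-Dirichlet} and the elementary convergence of $H(s)$ to conclude.
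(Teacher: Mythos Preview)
Your proposal is correct and follows essentially the same approach as the paper: both arguments use $\ell$-power reciprocity \eqref{reciprocity} to rewrite $\prod_h\chi_{v,\ell}(v_h)^{k_h}$ as a degree-dependent twist times a nontrivial Dirichlet character $\prod_h\chi_{v_h,\ell}^{k_h}$ modulo $v_1\cdots v_n$, factor $\mathcal M$ as a product over $1\le t\le\ell-1$ of twisted $L$-functions $L((\ell-1)s,\Psi\chi)$ times an absolutely convergent correction in $\re(s)>\tfrac{1}{2(\ell-1)}$, and then invoke Lemma~\ref{Lem-Dirichlet} together with the observation that primality of $\ell$ and $(k_1,\dots,k_n)\neq(0,\dots,0)$ force each $\chi^t$ to be nontrivial. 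The bookkeeping you flag (finitely many omitted places, the place at infinity) is exactly what the paper absorbs into its analytic factors $\mathcal C^1,\mathcal C^2$.
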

In the case where we have only one place $v_0 \in \mathcal{V}_K$ with prescribed ramification $\epsilon_0 \in \{\mathrm{ramified}, \mathrm{split}, \mathrm{inert}\}$, we will denote the above function by
\begin{equation}\label{msimple}{\mathcal M}_{j, k}(s;v_0,\epsilon_0) :=  {\mathcal M}_{j, k_1}(s; {\mathcal V}_R, {\mathcal V}_S, {\mathcal V}_U).
 \end{equation}

\begin{proof} For the absolute convergence, we have that the convergence of
$\prod_{v} (1+(\ell-1)|Nv^{-s(\ell-1)}|)$ is equivalent to that of $\sum_{v}\frac{1}{Nv^{s(\ell-1)}}$ and this convergence follows in the same way
as the absolute convergence for the zeta function $\zeta_K(s)$ in $\re(s)>1$.

For the analytic continuation, we  write
\begin{eqnarray*}
&&{\mathcal M}_{j, k_1, \dots, k_n}(s; {\mathcal V}_R, {\mathcal V}_S, {\mathcal V}_U) \\ &&= \mathcal{C}^{1}_{j, k_1, \dots, k_n}(s)
\; \prod_{i=1}^{\ell-1}
\prod_{v \not\in {\mathcal V}_R \cup {\mathcal V}_U} \left( 1 + \xi_\ell^{i j\deg v} \prod_{h=1}^n \chi_{v, \ell}(v_h)^{i k_h} Nv^{-(\ell-1)s}\right)\\
&&= \mathcal{C}^{2}_{j, k_1, \dots, k_n}(s) \prod_{i=1}^{\ell-1} \prod_{v \not\in {\mathcal V}_R\cup \mathcal{V}_U} \left( 1 - \xi_\ell^{i j \deg v}
\prod_{h=1}^{n} \Psi_{v_h, \ell}(v)^{i k_h} \chi_{v_h,\ell}(v)^{i k_h} Nv^{-(\ell-1)s} \right)^{-1},
\end{eqnarray*}
where we have used $\ell$-power reciprocity \eqref{reciprocity}, and
where $\mathcal{C}^{1}_{j, k_1, \dots, k_n}(s)$ and $\mathcal{C}^{2}_{j, k_1, \dots, k_n}(s)$ are analytic functions for $\re(s) > 1/2(\ell-1)$ as the Euler products converge absolutely in that region.
For each $1 \leq i \leq \ell-1$, each $0 \leq j \leq \ell-1$ and each tuple $(k_1, \dots, k_n)$ as above, we have that the functions
\begin{eqnarray*}
L_{i, j,k_1, \dots k_n}(s) &=& \prod_{v \not\in {\mathcal V}_R\cup \mathcal{V}_U} \left( 1 - \xi_\ell^{i j \deg v}
\prod_{h=1}^{n} \Psi_{v_h, \ell}(v)^{i k_h} \chi_{v_h,\ell}(v)^{i k_h}
 Nv^{-(\ell-1)s} \right)^{-1} \\
&=& L(s_1, \Psi_{i,j,k_1, \dots, k_h} \; \chi_{i,j,k_1, \dots, k_h})
\end{eqnarray*}
are twisted Dirichlet functions as in Lemma \ref{Lem-Dirichlet}, where $s_1=(\ell-1)s$,
\begin{eqnarray*}
\Psi_{i,j,k_1, \dots, k_h}(v) &=& \xi_\ell^{i j \deg v} \prod_{h=1}^{n} \Psi_{v_h, \ell}(v)^{i k_h}, \\
\chi_{i,j,k_1, \dots, k_h}(v) &=& \prod_{h=1}^{n} \chi_{v_h,\ell}(v)^{i k_h}.
\end{eqnarray*}
Then, $\Psi_{i,j,k_1, \dots, k_h}(v)$
depends only on the degree of $v$, and $\chi_{i,j,k_1, \dots, k_h}(v)$ is a nontrivial Dirichlet character since $1 \leq i \leq \ell-1$, $(k_1, \dots, k_n) \neq 0$ and the product is taken over different places so that there is no possibility of cancelation.
Applying Lemma \ref{Lem-Dirichlet}, this completes the proof of the analytic continuation.

\end{proof}

Let $\xi_\ell$ be a primitive $\ell$th root of 1.
We now prove a result bounding the meromorphic continuation of the functions
\begin{eqnarray} \label{def-as}
\mathcal{A}(s)&:=&\prod_{v} \left(1+(\ell-1)Nv^{-(\ell-1)s}\right)\\
\label{def-bs}
\mathcal{B}(s)&:=&\prod_{v} \left(1+(\xi_\ell^{\deg v}+\cdots
+\xi_\ell^{(\ell-1)\deg v})Nv^{-(\ell-1)s}\right)
\end{eqnarray}
on the line ${\re}(s) = \frac{1}{2 (\ell-1)} + \varepsilon$, for any $\varepsilon > 0$.
We remark that the Euler products converge (absolutely and uniformly) for $\re(s) > \frac{1}{\ell-1}$.

Unless otherwise specified, we continue to use the convention that all the sums and products over $v$ include all places, including the place at infinity.

\begin{lem}\label{chantalmagictrivial} Let $0<\varepsilon < \frac{1}{2(\ell-1)}.$ The functions
$\mathcal A (s)$ and $\mathcal{B}(s)$ have meromorphic continuation to the region $\mathrm{Re}(s)>\frac{1}{2(\ell-1)}+\varepsilon$, and their only singularities in this region are poles on the line $\re(s) = \frac{1}{\ell-1}$.
Furthermore, both functions are absolutely bounded on the region $\frac{1}{2(\ell-1)}<{\re}(s) < \frac{1}{\ell-1}-\varepsilon.$
\end{lem}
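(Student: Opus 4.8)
The plan is to write each of $\mathcal{A}(s)$ and $\mathcal{B}(s)$ as an explicit ``zeta part'' --- a rational function in $q^{-s}$ built from $\zeta_K$ and the zeta function of the constant field extension $\F_{q^\ell}(X)$, whose poles are visibly confined to $\re(s)\in\{0,\frac{1}{\ell-1}\}$ --- times a ``remainder'' Euler product that converges absolutely, and locally uniformly, on a half-plane strictly to the left of $\re(s)=\frac{1}{2(\ell-1)}$.

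For $\mathcal{A}$ I would begin from the identity $1+(\ell-1)t=(1-t)^{-(\ell-1)}g(t)$ with $g(t)=(1-t)^{\ell-1}(1+(\ell-1)t)=1-\binom{\ell}{2}t^2+O(t^3)$, so that, using $\prod_v(1-Nv^{-(\ell-1)s})^{-1}=\zeta_K((\ell-1)s)$,
\[
\mathcal{A}(s)=\zeta_K((\ell-1)s)^{\ell-1}\;\prod_v g\bigl(Nv^{-(\ell-1)s}\bigr),
\]
the displayed product converging absolutely for $\re(s)>\frac{1}{2(\ell-1)}$ since $g(t)-1=O(t^2)$. To push boundedness all the way down to the line $\re(s)=\frac{1}{2(\ell-1)}$ I would peel off one more factor: write $g(t)=(1-t^2)^{\binom{\ell}{2}}h(t)$, where $h$ is a fixed rational function holomorphic on $|t|<1$ with $h(t)=1+O(t^3)$, and use $\prod_v(1-Nv^{-2(\ell-1)s})^{\binom{\ell}{2}}=\zeta_K(2(\ell-1)s)^{-\binom{\ell}{2}}$, giving
\[
\mathcal{A}(s)=\zeta_K((\ell-1)s)^{\ell-1}\;\zeta_K(2(\ell-1)s)^{-\binom{\ell}{2}}\;\prod_v h\bigl(Nv^{-(\ell-1)s}\bigr),
\]
with the last product now absolutely and locally uniformly convergent for $\re(s)>\frac{1}{3(\ell-1)}$. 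Since $\zeta_K(w)=\bigl((1-q^{-w})(1-q^{1-w})\bigr)^{-1}$ is meromorphic on $\C$ with poles only where $q^{-w}=1$ or $q^{1-w}=1$, the first factor contributes poles only on $\re(s)\in\{0,\frac{1}{\ell-1}\}$, the second is entire (a polynomial in $q^{-2(\ell-1)s}$, bounded on vertical strips), and the third is holomorphic and, since $\sum_v Nv^{-3/2}<\infty$, uniformly bounded on $\re(s)\ge\frac{1}{2(\ell-1)}$. Reading this off on the strip $\frac{1}{2(\ell-1)}<\re(s)<\frac{1}{\ell-1}-\varepsilon$, where $\re((\ell-1)s)$ stays in a fixed closed subinterval of $(0,1)$ so that $|1-q^{-(\ell-1)s}|$ and $|1-q^{1-(\ell-1)s}|$ are bounded above and below, yields both the meromorphic continuation and the bound.

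For $\mathcal{B}$ I would run the same scheme, using that the Euler factor's coefficient $\xi_\ell^{\deg v}+\cdots+\xi_\ell^{(\ell-1)\deg v}$ equals $-1$ when $\ell\nmid\deg v$ and $\ell-1$ when $\ell\mid\deg v$. Splitting $\prod_v$ accordingly and recombining via $\prod_v(1-Nv^{-(\ell-1)s})=\zeta_K((\ell-1)s)^{-1}$ and the standard Euler-product comparison
\[
\prod_{\ell\mid\deg v}(1-Nv^{-w})^{-\ell}=\zeta_{\F_{q^\ell}(X)}(w)\;\prod_{\ell\nmid\deg v}(1-Nv^{-\ell w}),
\]
which holds because a place $v$ of $K$ is totally split, resp.\ inert, in the constant field extension $\F_{q^\ell}(X)$ according as $\ell\mid\deg v$ or not, puts $\mathcal{B}$ into the form (an explicit ratio of $\zeta_{\F_{q^\ell}(X)}(w)=\bigl((1-q^{-\ell w})(1-q^{\ell-\ell w})\bigr)^{-1}$ and $\zeta_K(w)$ at $w=(\ell-1)s$, whose poles again lie only on $\re(s)\in\{0,\frac{1}{\ell-1}\}$) times absolutely convergent Euler products; the one borderline remainder $\prod_{\ell\mid\deg v}g(Nv^{-(\ell-1)s})$ is handled by exactly the same $(1-t^2)^{\binom{\ell}{2}}$ device, raising the comparison above to the integer power $\binom{\ell}{2}/\ell=(\ell-1)/2$ (valid for $\ell\ge 3$; when $\ell=2$ the claim is immediate, since then $\mathcal{A}$ and $\mathcal{B}$ are themselves explicit rational functions of $q^{-s}$).

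The main obstacle is not the meromorphic continuation, which drops out as soon as the zeta parts are isolated, but getting the bound uniformly up to the \emph{left} edge $\re(s)=\frac{1}{2(\ell-1)}$: the first-level factorization $\mathcal{A}=\zeta_K((\ell-1)s)^{\ell-1}\cdot(\text{Euler product})$ has a remainder that converges on $\re(s)>\frac{1}{2(\ell-1)}$ but not uniformly at the edge, so one genuinely needs the second peeling step --- extracting $\zeta_K(2(\ell-1)s)^{\mp\binom{\ell}{2}}$, which is entire and bounded on vertical strips --- to reduce to local factors of the shape $1+O(t^3)$ whose infinite product converges uniformly for $\re(s)\ge\frac{1}{2(\ell-1)}$. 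After that bookkeeping, everything left is routine estimation of $|1-q^{\pm w}|$ for $\re(w)$ in a fixed closed subinterval of $(0,1)$ and convergence of $\sum_v Nv^{-c}$ for $c>1$.
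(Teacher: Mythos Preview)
Your proposal is correct and follows essentially the same strategy as the paper. For $\mathcal{A}(s)$ the two arguments are identical: factor out $\zeta_K((\ell-1)s)^{\ell-1}$, then peel off $\zeta_K(2(\ell-1)s)^{-\binom{\ell}{2}}$ to leave an Euler product with local factors $1+O(t^3)$ that converges absolutely on $\re(s)\ge \frac{1}{2(\ell-1)}$. For $\mathcal{B}(s)$ your packaging via the constant field extension is equivalent to the paper's: the paper extracts $\prod_{j=1}^{\ell-1}Z_K(\xi_\ell^j u)$, and since $\prod_{j=1}^{\ell-1}(1-\xi_\ell^j z)=(1-z^\ell)/(1-z)$ one has $\prod_{j=1}^{\ell-1}Z_K(\xi_\ell^j u)=\zeta_{\F_{q^\ell}(X)}((\ell-1)s)/\zeta_K((\ell-1)s)$, which is exactly your ``zeta part''. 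The remainder is then handled the same way in both arguments, up to harmless $O(t^3)$ discrepancies absorbed in the analytic correction factor. One small wording point: on the strip $\frac{1}{2(\ell-1)}<\re(s)<\frac{1}{\ell-1}-\varepsilon$ the real part of $(\ell-1)s$ lies in the half-open interval $(1/2,\,1-(\ell-1)\varepsilon]$, not a closed one; but your argument still gives the uniform bound because you have already shown the $h$-product converges uniformly on $\re(s)\ge\frac{1}{2(\ell-1)}$ and the explicit zeta factors extend continuously (indeed, $\zeta_K(2(\ell-1)s)^{-\binom{\ell}{2}}$ is entire and bounded on vertical strips).
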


\begin{proof}
For $\mathrm{Re}(s) > \frac{1}{\ell-1},$  we have
\begin{eqnarray*}
\mathcal{A}(s) & = & \prod_{v} \left(1+(\ell-1)Nv^{-(\ell-1)s}\right)\\
&=& \zeta_K((\ell-1)s)^{\ell-1} \prod_{v} \left(1+(\ell-1)Nv^{-(\ell-1)s}\right) (1-Nv^{-(\ell-1)s})^{\ell -1}\\
& = &  \zeta_K((\ell-1)s)^{\ell-1} \prod_{v} \left(1+(\ell-1)Nv^{-(\ell-1)s}\right) \left(1 - (\ell-1) Nv^{-(\ell-1)s}  \right. \\
&& \;\;\;\;\; + \left. \binom{\ell-1}{2} Nv^{-2(\ell-1)s}
 + Nv^{-3(\ell-1)s} O_\ell\left( 1 \right) \right) \\
&=&   \zeta_K((\ell-1)s)^{\ell-1} \prod_{v}  \left(1 - \binom{\ell-1}{2} Nv^{-2(\ell-1)s} + Nv^{-3(\ell-1)s} O_\ell\left( 1 \right) \right) \\
& = & \mathcal{C}(s) \zeta_K((\ell-1)s)^{\ell-1} \prod_{v} \left(1 - Nv^{-2(\ell-1)s} \right)^{\frac{\ell(\ell-1)}{2}}  \\
& =  & \mathcal{C}(s)  \frac{\zeta_K((\ell-1)s)^{\ell-1}}{\zeta_K(2(\ell-1)s)^{\frac{\ell(\ell-1)}{2}}},
\end{eqnarray*}
where $\mathcal C(s)$ is analytic for $\re(s) > \frac{1}{3(\ell-1)} + \varepsilon$.
Thus for $s= \frac{1}{2(\ell-1)} + \varepsilon,$ as $\varepsilon$ goes to zero  the function $\mathcal A(s)$ converges to zero, and the result follows. The poles
are given by those of $\zeta_K((\ell-1)s)$, namely $s=1/(\ell-1)$, with multiplicity $\ell-1$.

Similarly, for $\mathrm{Re}(s) > \frac{1}{\ell-1},$  we have
\begin{eqnarray*}
\mathcal{B}(s) & = & \prod_{v} \left(1+(\xi_\ell^{\deg v}+\cdots
+\xi_\ell^{(\ell-1)\deg v})Nv^{-(\ell-1)s}\right) \\
&=& \prod_{j=1}^{\ell-1}  Z_K(\xi_\ell^j u) \prod_{v} \left(1+(\xi_\ell^{\deg v}+\cdots
+\xi_\ell^{(\ell-1)\deg v})Nv^{-(\ell-1)s}\right) \prod_{j=1}^{\ell-1} \left(1 - \xi_\ell^{j \deg{v}} Nv^{- (\ell-1)s}
\right),\\
\end{eqnarray*}
where $u=q^{-(\ell-1)s}$ and $Z_K(u):=\frac{1}{(1-qu)(1-u)}$
is the zeta function of $K$.

Thus, we have,
\begin{eqnarray*}
\mathcal{B}(s) &=& \prod_{j=1}^{\ell-1}  Z_K(\xi_\ell^j u) \prod_{v} \left(1+(\xi_\ell^{\deg v}+\cdots
+\xi_\ell^{(\ell-1)\deg v})Nv^{-(\ell-1)s}\right)\\
&\times& \prod_{v} \biggl(1-(\xi_\ell^{\deg v}+\cdots
+\xi_\ell^{(\ell-1)\deg v})Nv^{-(\ell-1)s}   \\
& & + \left(\sum_{1\leq i<j\leq \ell -1}\xi_\ell^{i \deg v} \xi_\ell^{j \deg{v}}\right) Nv^{-2(\ell-1)s} + Nv^{-3(\ell-1)s} O_\ell(1)
\biggr) \\
&=& \mathcal{C}(s)\prod_{j=1}^{\ell-1}  Z_K(\xi_\ell^j u) \prod_{v} \left( 1 + c(\ell) Nv^{-2(\ell-1)s} \right),
\end{eqnarray*}
where
\begin{align*}
c(\ell) &= - \left(\xi_\ell^{\deg v}+\cdots
+\xi_\ell^{(\ell-1)\deg v}\right)^2 + \sum_{1\leq i<j\leq \ell -1} \xi_\ell^{i \deg v} \xi_\ell^{j \deg{v}} \\
&= - \sum_{1\leq i\leq j\leq \ell -1} \xi_\ell^{i \deg v} \xi_\ell^{j \deg{v}}\\
&=\begin{cases}
   -\frac{\ell(\ell-1)}{2} & \ell \mid \deg v, \ell>2,\\
   0 & \ell \nmid \deg v, \ell >2,\\
  -1 & \ell =2,
   \end{cases}
\end{align*}
and $\mathcal C(s)$ is analytic for $\re(s) > \frac{1}{3(\ell-1)} + \varepsilon$.
Thus for $s= \frac{1}{2(\ell-1)} + \varepsilon,$ as $\varepsilon \to 0$,  the function $\mathcal B(s)$ converges to $0$, and the result follows.

The poles are those of $Z_K(\xi_\ell^j u)$, namely, poles of order one at  $s=\frac{1}{\ell-1}+\frac{2j\pi i}{(\ell-1)\ell\log q}$.

\end{proof}

\section{$\ell$-Cyclic Extensions}\label{section:caseell}

In this section, we give the proofs of the main results of this paper. We will continue with the notation introduced in the earlier sections. Recall that for a fixed prime $\ell$, $N(\Z/\ell \Z, \od)$ denotes the number of extensions of $K$ with Galois group $\Z/\ell \Z$ such that the degree of the conductor is $\od$.
As before, $\xi_\ell \in \C$  stands for a primitive $\ell$th root of 1.

We start by proving the first part of Theorem \ref{thm:main}.

\begin{thm} \label{caseL} Let $\ell\in \Z$ be a prime. We have
\begin{eqnarray}\label{ell-denom}
  N(\Z / \ell \Z, \od) &=& C_\ell \; q^\od  P_\ell(\od) + O \left(
  q^{\left( \frac{1}{2} + \varepsilon \right) \od} \right),
  \end{eqnarray}
where $P_\ell(X) \in \R[X]$ is a monic polynomial of degree $\ell-2$, and
where $C_\ell$ is the non-zero constant given by
\begin{eqnarray*} C_\ell =   \frac{(1-q^{-2})^{\ell-1}}{(\ell-2)!}
\prod_{j=1}^{\ell-2}  \prod_v \left(1-\frac{jq^{-2\deg v}}{(1+q^{-\deg
v})(1+jq^{-\deg v})}\right).\end{eqnarray*}
 \end{thm}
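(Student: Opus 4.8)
The plan is to pass, via the class field theory translation of Section \ref{section:setup}, from counting extensions to the analytic study of the generating series $\sum_{\Gal(L/K)\cong\Z/\ell\Z}\mathfrak{D}(L/K)^{-s}=\sum_{f\in\CCD_K^+}a_\ell(f)\,Nf^{-(\ell-1)s}$, and then to apply the Tauberian theorem (Theorem \ref{thm:Tauberian}). First I would use Propositions \ref{unram} and \ref{globalccresult} to rewrite this series as a sum over the maps $\phi$: because $q\equiv 1\pmod\ell$, at each place $v$ there are exactly $\ell$ local maps $\phi_v\colon\co_v^\times\to\Z/\ell\Z$, the trivial one (contributing $Nv^{0}=1$, $v$ unramified) and $\ell-1$ nontrivial ones (each contributing $Nv^{-(\ell-1)s}$, $v$ ramified, with a prescribed value $\phi_v(g_v)\in(\Z/\ell\Z)^\times$), and the only global constraint is the congruence \eqref{eq:modell}. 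Detecting \eqref{eq:modell} by the orthogonality relation $\tfrac1\ell\sum_{j=0}^{\ell-1}\xi_\ell^{\,j\sum_{v}\phi_v(g_v)\deg v}$ turns the series into a sum of $\ell$ Euler products over $v$. The $j=0$ term is exactly $\mathcal{A}(s)=\prod_v\bigl(1+(\ell-1)Nv^{-(\ell-1)s}\bigr)$ of \eqref{def-as}. For $j\neq 0$ the local factor is $1+\bigl(\sum_{a=1}^{\ell-1}\xi_\ell^{\,ja\deg v}\bigr)Nv^{-(\ell-1)s}$, which is independent of $j$ (because $a\mapsto ja$ permutes $(\Z/\ell\Z)^\times$) and equals the local factor of $\mathcal{B}(s)$ in \eqref{def-bs}. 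Hence, up to a bounded correction that does not affect the asymptotics (coming from the everywhere-unramified extension), the Dirichlet series equals $\mathcal{A}(s)+(\ell-1)\mathcal{B}(s)$.

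Next I would feed this into Theorem \ref{thm:Tauberian} with $k=\ell-1$ and $\delta$ slightly above $\tfrac1{2(\ell-1)}$. Lemma \ref{chantalmagictrivial} supplies precisely what is needed: $\mathcal{A}$ and $\mathcal{B}$ continue meromorphically past $\re(s)=\tfrac1{2(\ell-1)}$, their only poles in that region lie on $\re(s)=\tfrac1{\ell-1}$ (a single pole of order $\ell-1$ of $\mathcal{A}$ at $s=\tfrac1{\ell-1}$, coming from $\zeta_K((\ell-1)s)^{\ell-1}$, and $\ell-1$ simple poles of $\mathcal{B}$ coming from the factors $Z_K(\xi_\ell^{\,j}u)$), and they are absolutely bounded on $\tfrac1{2(\ell-1)}<\re(s)<\tfrac1{\ell-1}-\varepsilon$, so $M=\max_{\re(s)=\delta}|\mathcal{F}(s)|=O_q(1)$. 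With $u=q^{-(\ell-1)s}$ and $F(u)=\mathcal{F}(s)$ this yields $N(\Z/\ell\Z,\od)=-\sum_{|u|=q^{-1}}\Res_u\frac{F(u)}{u^{\od+1}}+O\!\bigl(q^{(1/2+\varepsilon)\od}\bigr)$, where on $|u|=q^{-1}$ the function $F$ has one pole of order $\ell-1$ at $u=q^{-1}$ (from $\mathcal{A}$) and $\ell-1$ simple poles at $u=\xi_\ell^{-j}q^{-1}$, $1\le j\le\ell-1$ (from $\mathcal{B}$).

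The last step is the residue computation. From the proof of Lemma \ref{chantalmagictrivial}, $\mathcal{A}(s)=g(u)/(1-qu)^{\ell-1}$ with $g$ analytic and nonvanishing at $u=q^{-1}$ and $g(q^{-1})=(1-q^{-1})^{-(\ell-1)}\prod_v\bigl(1+(\ell-1)q^{-\deg v}\bigr)\bigl(1-q^{-\deg v}\bigr)^{\ell-1}$ (nonvanishing is also visible from $\mathcal{A}(s)=\mathcal{C}(s)\,\zeta_K((\ell-1)s)^{\ell-1}/\zeta_K(2(\ell-1)s)^{\ell(\ell-1)/2}$ with $\mathcal{C}$ analytic and nonvanishing near $s=\tfrac1{\ell-1}$). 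Writing $(1-qu)^{-(\ell-1)}=(-q)^{-(\ell-1)}(u-q^{-1})^{-(\ell-1)}$ and differentiating $\ell-2$ times, the residue at $u=q^{-1}$ works out to $C_\ell q^\od P_\ell(\od)$ with $C_\ell=g(q^{-1})/(\ell-2)!$ and $P_\ell$ monic of degree $\ell-2$ (the term $(\od+1)(\od+2)\cdots(\od+\ell-2)$ comes from the $(\ell-2)$-fold derivative hitting $u^{-\od-1}$). The $\ell-1$ simple poles of $\mathcal{B}$ contribute only $O_\ell(q^\od)$, i.e.\ terms of strictly smaller degree in $\od$, which I would fold into the lower-order coefficients of $P_\ell$; for $\ell=2$ this is exactly the mechanism producing the parity of the exact formula for $N(\Z/2\Z,\od)$. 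It then remains to match $C_\ell=g(q^{-1})/(\ell-2)!$ with \eqref{constantC0}: using the telescoping identity $\prod_{j=1}^{\ell-2}\!\Bigl(1-\tfrac{jt^2}{(1+t)(1+jt)}\Bigr)=\prod_{j=1}^{\ell-2}\tfrac{1+(j+1)t}{(1+t)(1+jt)}=\tfrac{1+(\ell-1)t}{(1+t)^{\ell-1}}$ with $t=q^{-\deg v}$, together with $\prod_v\bigl(1-q^{-2\deg v}\bigr)=\zeta_K(2)^{-1}=(1-q^{-1})(1-q^{-2})$, one rewrites $g(q^{-1})=(1-q^{-2})^{\ell-1}\prod_{j=1}^{\ell-2}\prod_v\bigl(1-\tfrac{jq^{-2\deg v}}{(1+q^{-\deg v})(1+jq^{-\deg v})}\bigr)$, which is \eqref{constantC0}.

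I expect the main obstacle to be not the analytic input (packaged in Lemma \ref{chantalmagictrivial} and Theorem \ref{thm:Tauberian}) but the fully explicit bookkeeping of the third step: putting $C_\ell$ into the closed form \eqref{constantC0}, tracking all lower-order coefficients of $P_\ell(\od)$ (including the oscillating contributions of the secondary poles $u=\xi_\ell^{-j}q^{-1}$), and making the bound $M=O_q(1)$ genuinely uniform. The case $\ell=2$ should be handled separately and more cheaply: there $\mathcal{A}(s)=(1+u)(1-qu^2)/(1-qu)$ and $\mathcal{B}(s)=(1-u)(1-qu^2)/(1+qu)$ are rational in $u$, so one reads off the residues at $u=\pm q^{-1}$ directly and recovers the exact formula for $N(\Z/2\Z,\od)$ without invoking the Tauberian theorem.
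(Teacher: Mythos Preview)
Your proposal is correct and follows essentially the same route as the paper: translate to maps $\phi$ via Propositions \ref{unram} and \ref{globalccresult}, detect the congruence \eqref{eq:modell} by orthogonality to obtain $\mathcal{F}(s)=\mathcal{A}(s)+(\ell-1)\mathcal{B}(s)$ (the paper absorbs the bounded correction by writing $\mathcal{F}(s)=\ell+\sum\mathfrak{D}(L/K)^{-s}$, and the missing factor of $\ell$ in your orthogonality detector is restored by the $\ell$ choices of $\psi_\infty$), invoke Lemma \ref{chantalmagictrivial} and Theorem \ref{thm:Tauberian}, and then compute the residues. The only cosmetic difference is that the paper applies the telescoping identity $\prod_{j=1}^{\ell-2}\bigl(1-\tfrac{jt^2}{(1+t)(1+jt)}\bigr)=\tfrac{1+(\ell-1)t}{(1+t)^{\ell-1}}$ \emph{before} the residue computation (to write $\mathcal{A}(s)$ directly in the form that exhibits $C_\ell$), whereas you compute $g(q^{-1})$ first and apply the identity afterwards; the two orderings are equivalent, and your handling of the $\ell=2$ case matches Section \ref{section:case2}.
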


\begin{proof} To  compute $N(\Z / \ell \Z, \od)$, we consider the Dirichlet series $\mathcal{F}(s)$, which is the generating function with an added constant, namely,
\begin{eqnarray*}
\mathcal{F}(s)  := \ell +  \sum_{\Gal(L/K) \cong \Z/\ell \Z} \mathfrak{D}(L/K)^{-s}.
\end{eqnarray*}
We claim that
\begin{eqnarray*}
\mathcal{F}(s)
 &=&\sum_{j=0}^{\ell-1}\prod_{v} \left(1+(\xi_\ell^{j\deg v}+\cdots
+\xi_\ell^{(\ell-1)j\deg v})Nv^{-(\ell-1)s}\right)\\
&=&\prod_{v} \left(1+(\ell-1)Nv^{-(\ell-1)s}\right) + (\ell-1) \prod_{v} \left(1+(\xi_\ell^{\deg v}+\cdots
+\xi_\ell^{(\ell-1)\deg v})Nv^{-(\ell-1)s}\right)\\
&=& \mathcal{A}(s)+(\ell-1)\mathcal{B}(s).
\end{eqnarray*}
Indeed,  by Propositions \ref{unram} and \ref{globalccresult} the  extensions $L/K$ are in one-to-one correspondence with the maps $\phi: \pi_\infty ^\Z \times \prod_v \co_v^\times \to  \Z/\ell\Z$  satisfying
\eqref{eq:modell}.
Let $\cond(\phi)$ be the conductor of such a map $\phi$, and $v$ a place of the conductor. In the first line above,
the $i$th term $\xi_\ell^{ij\deg v}Nv^{-(\ell-1)s}$ in each Euler product corresponds to the map where
$\phi_{v}(g_v)=i$  for $1 \leq i \leq \ell-1$. Therefore, considering all the places $v$ of $\cond(\phi)$, the term in the $j$th Dirichlet series above corresponding to the global map
$\phi$ equals
\[
\left(\xi_\ell^{\sum_v j \phi_v(g_v)\deg v} \right)\times N(\cond(\phi))^{-(\ell-1)s}
\]
for $0 \leq j \leq \ell-1$.
Thus the sum of those terms over the index $j$ yields
$\ell N(\cond(\phi))^{-(\ell-1)s}$ if $\sum_v \phi_v(g_v)\deg v \equiv 0\pmod \ell$ and  $0$
otherwise, and we recover \eqref{eq:modell}. Notice that the $\ell$ factor multiplying
$N(\cond(\phi))^{-(\ell-1)s}$ is accounting for  the different extensions with the same conductor $K(\sqrt[\ell]{f}), K(\sqrt[\ell]{\beta f}), \dots, K(\sqrt[\ell]{\beta^{\ell-1}f})$ for $\beta \in \F_q^\times$ not an $\ell$th power.
Similarly, the constant $\ell$ in the definition of $\mathcal{F}(s)$ accounts for the extensions  $ K(\sqrt[\ell]{\beta}), \dots, K(\sqrt[\ell]{\beta^{\ell-1}})$ for $\beta \in \F_q^\times$ not an $\ell$th power, as well as the $K$-algebra given by the
completely split cover.

Using the identity
$$
\frac{1 + (\ell-1)u}{(1+u)^{\ell-1}} = \prod_{j=1}^{\ell-2} \left( 1 - \frac{ju^2}{(1+u)(1+ju)} \right),
$$ we write
\begin{align*}
\mathcal{A}(s)&=\prod_{v} \left(1+(\ell-1)Nv^{-(\ell-1)s}\right)\\
&=\left(\frac{\zeta_K((\ell-1)s)}{\zeta_K(2(\ell-1)s)}\right)^{\ell-1}\prod_{j=1
}^{\ell-2} \prod_v
\left(1-\frac{jNv^{-2(\ell-1)s}}{(1+Nv^{-(\ell-1)s})(1+jNv^{-(\ell-1)s})}
\right),\\
\end{align*}
where the absolute convergence of the infinite products for $\re(s)>\frac{1}{2(\ell-1)}$ follows from that of $\sum_v \frac{1}{Nv^{2(\ell-1)s}}$.

We also write
\begin{align*}
\mathcal{B}(s)&=\prod_{v} \left(1+(\xi_\ell^{\deg v}+\cdots
+\xi_\ell^{(\ell-1)\deg v})Nv^{-(\ell-1)s}\right)
\\
&=\prod_v \prod_{j=1}^{\ell-1}\left( 1+\xi_\ell^{j\deg v} Nv^{-(\ell-1)s}\right)
\prod_{v} \frac{ \left(1+(\xi_\ell^{\deg v}+\cdots +\xi_\ell^{(\ell-1)\deg
v})Nv^{-(\ell-1)s}\right)}{\prod_{j=1}^{\ell-1} \left( 1+\xi_\ell^{j\deg v}
Nv^{-(\ell-1)s}\right)},
\end{align*}
where the absolute convergence of the infinite products follows in the same way as the products appearings in $\mathcal{A}(s)$.

Recall from Lemma \ref{chantalmagictrivial} that $\mathcal{A}(s)$ is a meromorphic function on
$\mathrm{Re}(s)>\frac{1}{2(\ell-1)}$ with a pole of order $\ell-1$ at
$s=\frac{1}{\ell-1}$ in the region $B_{\ell-1}$ as defined in \eqref{B_k}. The function $\mathcal{B}(s)$ is also meromorphic in
$\re(s)>\frac{1}{2(\ell-1)}$, with simple poles at
$s_{j}=\frac{1}{\ell-1}+\frac{2j\pi i}{(\ell-1)\ell \log q}$
for $|2j|<\ell$  in the region $B_{\ell-1}$.

We set $u=q^{-(\ell-1)s}$, and write $A(u):=\mathcal{A}(s)$ and
$B(u):=\mathcal{B}(s)$. Thus,
\begin{align*}
 A(u)&=\left(\frac{(1-qu^2)(1+u)}{(1-qu)}\right)^{\ell-1}\prod_{j=1}^{\ell-2}
\prod_v \left(1-\frac{ju^{2\deg v}}{(1+u^{\deg v})(1+ju^{\deg v})}\right),\\
B(u)&= \prod_v \prod_{j=1}^{\ell-1}\left( 1+(\xi_\ell^{j}u)^{\deg v}\right)
\prod_{v} \frac{ \left(1+(\xi_\ell^{\deg v}+\cdots +\xi_\ell^{(\ell-1)\deg
v})u^{\deg v}\right)}{\prod_{j=1}^{\ell-1}\left( 1+(\xi_\ell^{j}u)^{\deg
v}\right)}\\
&=\prod_{j=1}^{\ell-1}\frac{Z_K(\xi_\ell^j u)}{Z_K(\xi_\ell^{2j}
u^2)}\prod_v\frac{ \left(1+b(v)u^{\deg v}\right)}{\prod_{j=1}^{\ell-1}\left( 1+(\xi_\ell^{j}u)^{\deg
v}\right)},\\
\end{align*}
 where $
Z_K(u)=\frac{1}{(1-qu)(1-u)}$ and $b(v)=\xi_\ell^{\deg v}+\cdots +\xi_\ell^{(\ell-1)\deg
v}.$

Fix any $\delta$ with $\frac{1}{2(\ell-1)}<\delta<\frac{1}{\ell-1}$.
Then $A(u)$ and $B(u)$ are meromorphic functions on the disk $\{u :  |u|\leq
q^{-\delta}\}$. We see that $A(u)$ has a pole of order $\ell-1$ at $u=1/q$ and $B(u)$ has
$(\ell-1)$ simple poles at $u=(q \xi_\ell^j )^{-1}$ for $j=1,\cdots, \ell-1$. Then,
applying Theorem \ref{thm:Tauberian} and Lemma \ref{chantalmagictrivial} to $\mathcal F(s)=\mathcal A(s)+(\ell-1)\mathcal B(s)$
with $\delta=\frac{1}{2(\ell-1)}+\varepsilon$ for $\varepsilon > 0$,  we have
that
\begin{eqnarray}  \label{plughere}
N(\Z / \ell \Z, \od) = - \mathrm{Res}_{u=q^{-1}} \frac{A(u)}{u^{\od+1}} -
\sum_{j=1}^{\ell-1}
\mathrm{Res}_{u=(q\xi_\ell^{j})^{-1}}  \frac{B(u)}{u^{\od+1}} + O \left(
q^{(1/2 + \varepsilon) \od} \right) .
\end{eqnarray}

We compute,
\begin{align*}
&\mathrm{Res}_{u=q^{-1}} \frac{A(u)}{u^{\od+1}}
\\&=\lim_{u \to q^{-1}} \frac{1}{(\ell-2)!}\frac{d^{\ell-2}}{d u^{\ell-2}}
(u-q^{-1})^{\ell-1}
\frac{1}{u^{\od+1}}\left(\frac{(1-qu^2)(1+u)}{(1-qu)}\right)^{\ell-1}\prod_{j=1}
^{\ell-2} \prod_v \left(1-\frac{ju^{2\deg v}}{(1+u^{\deg v})(1+ju^{\deg
v})}\right)\\
&=\lim_{u \to q^{-1}} \frac{1}{(\ell-2)!}\frac{d^{\ell-2}}{d u^{\ell-2}}
\left(\frac{(-(1-qu^2)(1+u))^{\ell-1}}{q^{\ell-1}u^{\od+1}}\right)\prod_{j=1}^{
\ell-2} \prod_v \left(1-\frac{ju^{2\deg v}}{(1+u^{\deg v})(1+ju^{\deg
v})}\right).\\
\end{align*}
Let
\begin{equation}\label{eq:H}
H_\ell(u):=\frac{1}{(\ell-2)!}\left(\frac{(-(1-qu^2)(1+u))^{\ell-1}}{q^{\ell-1}}\right)\prod_{j=1}^{
\ell-2} \prod_v \left(1-\frac{ju^{2\deg v}}{(1+u^{\deg v})(1+ju^{\deg
v})}\right).
\end{equation}
Then, using the product rule for derivatives, we get
\begin{align*}
 \mathrm{Res}_{u=q^{-1}} \frac{A(u)}{u^{\od+1}}&=\lim_{u \to q^{-1}} \sum_{i=0}^{\ell-2} \binom{\ell-2}{i} \frac{d^i}{d u^i}\left(\frac{1}{u^{\od+1}}\right) \frac{d^{\ell-2-i}}{d u^{\ell-2-i}} H_\ell(u)\\
&=\lim_{u \to q^{-1}} \sum_{i=0}^{\ell-2} \binom{\ell-2}{i}\frac{(-1)^i(\od+1)\cdots(\od+i)}{u^{\od+i+1}} \frac{d^{\ell-2-i}}{d u^{\ell-2-i}} H_\ell(u)\\
&=\sum_{i=0}^{\ell-2} \binom{\ell-2}{i}(-1)^i(\od+1)\cdots(\od+i)q^{\od+i+1} \left.\frac{d^{\ell-2-i}}{d u^{\ell-2-i}} H_\ell(u)\right|_{u=q^{-1}},
\end{align*}
which proves that this residue is given by a polynomial  in $\od$.

 We take a closer look at the main term of this polynomial, which is the dominating term when $\od \to \infty$. We obtain
 \begin{align*}
 \mathrm{Res}_{u=q^{-1}} \frac{A(u)}{u^{\od+1}}&=\lim_{u \to q^{-1}} \frac{1}{(\ell-2)!}\frac{(-1)^{\ell-2}(\od+1)\cdots
(\od+\ell-2)}{u^{\od+\ell-1}}
\left(\frac{(-(1-qu^2)(1+u))^{\ell-1}}{q^{\ell-1}}\right)
\\ & \times \prod_{j=1}^{\ell-2} \prod_v \left(1-\frac{ju^{2\deg v}}{(1+u^{\deg
v})(1+ju^{\deg v})}\right)(1+O(1/\od))\\
&= -\frac{\od^{\ell-2}}{(\ell-2)!} (1-q^{-2})^{\ell-1}q^\od\prod_{j=1}^{\ell-2}
\prod_v \left(1-\frac{jq^{-2\deg v}}{(1+q^{-\deg v})(1+jq^{-\deg
v})}\right)(1+O(1/\od)).
\end{align*}

For the other residues, coming from simple poles,
\begin{align*}
& \mathrm{Res}_{u=(q\xi_\ell^{j_0})^{-1}} \frac{B(u)}{u^{\od+1}}
 \\
&=\lim_{u \to q^{-1}\xi_\ell^{-j_0}}
\frac{(u-q^{-1}\xi_\ell^{-j_0})}{u^{\od+1}}
\prod_{j=1}^{\ell-1}\frac{(1-q\xi_\ell^{2j} u^2)(1+\xi_\ell^j u )}
{(1-q\xi_\ell^j u)}\prod_v\frac{ \left(1+b(v)u^{\deg v}\right)}{\prod_{j=1}^{\ell-1}\left(
1+(\xi_\ell^{j}u)^{\deg v}\right)}\\
&=\lim_{u \to q^{-1}\xi_\ell^{-j_0}}
\frac{-(1-q\xi_\ell^{2j_0} u^2)(1+\xi_\ell^{j_0} u )}{u^{\od+1}q\xi_\ell^{j_0}}
\prod_{j=1, j\not = j_0}^{\ell-1}\frac{(1-q\xi_\ell^{2j} u^2)(1+\xi_\ell^j u )}
{(1-q\xi_\ell^j u)}\prod_v\frac{ \left(1+b(v)u^{\deg v}\right)}{\prod_{j=1}^{\ell-1}\left(
1+(\xi_\ell^{j}u)^{\deg v}\right)}\\
&=
-(q\xi_\ell^{j_0})^{\od}(1-q^{-2}) \prod_{j=1, j\not =
j_0}^{\ell-1}\frac{(1-q^{-1}\xi_\ell^{2j-2j_0})(1+q^{-1}\xi_\ell^{j-j_0} )}
{(1-\xi_\ell^{j-j_0})}\prod_v\frac{ \left(1+b(v)(q^{-1}\xi_\ell^{-j_0})^{\deg
v}\right)}{\prod_{j=1}^{\ell-1}\left( 1+(q^{-1}\xi_\ell^{j-j_0})^{\deg
v}\right)}.
\end{align*}
We note that the line above is $O(q^\od)$ and it contributes to the constant coefficient of $P_\ell(\od)$.

Replacing the residues in \eqref{plughere} with the equations above completes the proof.    \end{proof}

In spite of the fact that Corollary \ref{thm:specialcase} can be deduced from the statement of Theorem \ref{thm:main}, we will prove it first and independently of Theorem \ref{thm:main} as a way of introducing the key ideas in the
proof of Theorem \ref{thm:main}. The case of $v$ ramified and $\ell=2$ will be discussed later, in Section \ref{section:case2}.

Recall that
\begin{eqnarray}
 C_\ell =   \frac{(1-q^{-2})^{\ell-1}}{(\ell-2)!}
\prod_{j=1}^{\ell-2}  \prod_{v\in \mathcal{V}_K} \left(1-\frac{jq^{-2\deg v}}{(1+q^{-\deg
v})(1+jq^{-\deg v})}\right).\end{eqnarray}
\begin{proof} [Proof of Corollary  \ref{thm:specialcase}]  Since $v_0$ is ramified at a cover $L/K$ if and only if $v_0$ divides $\mathrm{Disc}(L/K)$, the generating function for the number of extensions counted by  $N(\Z / \ell \Z,
\od)$ that are ramified at $v_0$ is
\begin{eqnarray*} \mathcal{F}_R(s)&=&
\sum_{\substack{{\Gal(L/K)\cong \Z/\ell \Z}\\{v_0 \text{ ramified }}}} \mathfrak{D}(L/K)^{-s} \\
&=&(\ell-1)Nv_0^{-(\ell-1)s}\prod_{v\not = v_0}
\left(1+(\ell-1)Nv^{-(\ell-1)s}\right)\\&&+(\ell-1)b(v_0)Nv_0^{-(\ell-1)s}
\prod_{v\not=v_0} \left(1+b(v)Nv^{-(\ell-1)s}\right)\\
&=&\frac{(\ell-1)Nv_0^{-(\ell-1)s}}{1+(\ell-1)Nv_0^{-(\ell-1)s}}\mathcal{A}(s)+(\ell-1)\frac{b(v_0)Nv_0^{-(\ell-1)s}}{1+b(v_0)Nv_0^{-(\ell-1)s}} \mathcal{B}(s)
\end{eqnarray*}
where we have excluded the case   $\phi_{v_0}(g_{v_0})=0$ to account for $v_0$ ramified as stated in Proposition \ref{localmaps}.

With the change of variable $u=q^{-(\ell-1)s}$, we obtain
\[ F_R(u) =\frac{(\ell-1)u^{\deg v_0}}{1+(\ell-1)u^{\deg v_0}}
A(u)+(\ell-1)\frac{b(v_0)u^{\deg v_0}}{1+b(v_0)u^{\deg v_0}}B(u).\]

Then, applying Theorem \ref{thm:Tauberian} and Lemma \ref{chantalmagictrivial} with $\delta = \frac{1}{2 (\ell-1)} + \varepsilon$ for any $\varepsilon > 0$, we get
\begin{eqnarray*}
N(\Z/\ell\Z, \od, v_0, \mbox{ramified}) &=& - \mathrm{Res}_{u=q^{-1}}
\frac{(\ell-1)u^{\deg v_0}}{1+(\ell-1)u^{\deg v_0}}
\frac{A(u)}{u^{\od+1}} \\ && - (\ell-1) \sum_{j=1}^{\ell-1}
\mathrm{Res}_{u=(q\xi_\ell^{j})^{-1}} \frac{b(v_0)u^{\deg v_0}}{1+b(v_0)u^{\deg v_0}} \frac{B(u)}{u^{\od+1}} \\
&&+ O \left(
q^{\left(\frac{1}{2} + \varepsilon\right) \od} \right).\\
\end{eqnarray*}
For the residue involving the function $A(u)$, we have
\begin{equation*}
\mathrm{Res}_{u=q^{-1}} \frac{(\ell-1)u^{\deg v_0}}{1+(\ell-1)u^{\deg v_0}}\frac{A(u)}{u^{\od+1}}=\lim_{u \to q^{-1}} \frac{d^{\ell-2}}{d u^{\ell-2}} \frac{(\ell-1)u^{\deg v_0}}{1+(\ell-1)u^{\deg v_0}} \frac{H_\ell(u)}{u^{\od+1}},
\end{equation*}
where $H_\ell(u)$ is given by \eqref{eq:H}. This yields
\begin{align*}
&\mathrm{Res}_{u=q^{-1}} \frac{(\ell-1)u^{\deg v_0}}{1+(\ell-1)u^{\deg v_0}}\frac{A(u)}{u^{\od+1}}\\
&=\sum_{i=0}^{\ell-2} \binom{\ell-2}{i}  {(-1)^i(\od+1)\cdots(\od+i)}{q^{\od+i+1}}  \left.\frac{d^{\ell-2-i}}{d u^{\ell-2-i}} \frac{(\ell-1)u^{\deg v_0}}{1+(\ell-1)u^{\deg v_0}}H_\ell(u)\right|_{u=q^{-1}},\\
\end{align*}
and we obtain the polynomial in $\od$ as in the case of the proof of Theorem \ref{caseL}. As before, we record the main coefficient as the term dominating
when $\od \rightarrow \infty$ to be
\begin{align*}
& \mathrm{Res}_{u=q^{-1}} \frac{(\ell-1)u^{\deg v_0}}{1+(\ell-1)u^{\deg v_0}}\frac{A(u)}{u^{\od+1}}\\
&= -\frac{\od^{\ell-2}}{(\ell-2)!} (1-q^{-2})^{\ell-1}q^\od \frac{(\ell-1)q^{-\deg v_0}}{1+(\ell-1)q^{-\deg v_0}}
\prod_{j=1}^{\ell-2}
\prod_v \left(1-\frac{jq^{-2\deg v}}{(1+q^{-\deg v})(1+jq^{-\deg
v})}\right)(1+O(1/\od))\\
&= - C_\ell  \frac{(\ell-1) q^{-\deg{v_0}}}{1 + (\ell-1) q^{-\deg{v_q}}} q^\od \od^{\ell-2} (1+O(1/\od)).
\end{align*}

For the residues involving the function $B(u)$, we notice that, since the poles are of order one,
\begin{equation*}\mathrm{Res}_{u=(q\xi_\ell^{j})^{-1}} \frac{b(v_0)u^{\deg v_0}}{1+b(v_0)u^{\deg v_0}} \frac{B(u)}{u^{\od+1}}= \frac{b(v_0)(q\xi_\ell^{j})^{-\deg v_0}}{1+b(v_0)(q\xi_\ell^{j})^{-\deg v_0}}\mathrm{Res}_{u=(q\xi_\ell^{j})^{-1}} \frac{B(u)}{u^{\od+1}}.
\end{equation*}
The number above is equal to $O(q^\od)$ and it will contribute to the constant coefficient of the polynomial $P_R(\od)$.
This proves the result for the number of extensions ramifying at $v_0$. We now consider the case of extensions splitting at $v_0$.
First, we write the generating function for the number of extensions of $K$ unramified at $v_0$ as \begin{align*} \mathcal F_{U}(s)  & = \ell +
\sum_{\substack{{\Gal(L/K)\cong \Z/\ell \Z}\\{v_0 \text{ unramified }}}} \mathfrak{D}(L/K)^{-s} \\
&= \sum_{j=0}^{\ell-1}\prod_{v\not = v_0}\left(1+\left(\xi_\ell^{j\deg v}+\cdots+\xi_\ell^{(\ell-1)j\deg v}\right)Nv^{-(\ell-1)s}\right)\\
&=
\prod_{v\not = v_0}
\left(1+(\ell-1)Nv^{-(\ell-1)s}\right) +(\ell-1)\prod_{v\not=v_0}
\left(1+b(v)Nv^{-(\ell-1)s}\right)\\
&=\frac{1}{1+(\ell-1)Nv_0^{-(\ell-1)s}}\mathcal{A}(s)+\frac{(\ell-1)}{1+b(v_0)Nv_0^{-(\ell-1)s}}\mathcal{B}(s).
\end{align*}

Using the notation of Section \ref{characters}, recall that $b_\ell=\mu^{\frac{q-1}{\ell}}$ where $\mu$ is a generator of $\F_q^\times$
(hence $b_\ell$ is an $\ell$th root of unity in $\F_q^\times$), and $\sigma:\F_q^\times \rightarrow \C$ is a character of order $\ell$.
Let $\rho_\ell=\sigma(b_\ell)$,
which is then  a primitive $\ell$th root of unity in $\mathbb C$. For each $v\not = v_0, v_\infty$, denote by $n_v$ a positive integer such that the image of ${v_0}$ in $\left(\mathcal O_v / (\pi_v) \right)^\times$  is $g_v^{n_v}$. Then $\phi_v(v_0)=n_v\phi_v(g_v)$. Hence by Proposition \ref{localmaps} $v_0$  is unramified and split if and only if $\phi_{v_0}(\mathcal O_{v_0}^\times)=0$ and
\[
-(\deg v_0)\psi_\infty(\pi_\infty)+\sum_{v\neq v_0, v_\infty}n_v\phi_v(g_v)\equiv 0 \pmod \ell,
\]
which is equivalent to
\[
\rho_\ell^{-(\deg v_0)\psi_\infty(\pi_\infty)}\prod_{v\neq v_0, v_\infty}\rho_\ell^{n_v\phi_v(g_v)}=1
\]
for the  primitive $\ell$th root of unity $\rho_\ell$ coming from the choice of primitive root $b_\ell \in \F_q^\times$ that we fixed in Section \ref{characters}.

Thus $v_0\not=v_\infty$ is unramified and split if and only if
$\phi_{v_0}(\mathcal O_{v_0}^\times)=0$ and
\begin{align}\label{eqn:split}
D(v_0) := \rho_\ell^{-(\deg v_0)\psi_\infty(\pi_\infty)}\prod_{v\not =  v_0, v_\infty }\chi_{v,\ell}({v_0})^{\phi_v(g_v)}=1.
\end{align}
Since $D(v_0)$ is a $\ell$th root of unity, we can rewrite \eqref{eqn:split} as
\begin{align}\label{eqn:split-2}
\frac{1}{\ell} \sum_{j=0}^{\ell-1} D(v_0)^j = \begin{cases} 1 & \mbox{if $v_0$ is unramified and split,} \\
0 & \mbox{otherwise,} \end{cases}
\end{align}
and this is the criterion that we will use in the generating series.

Analogously, we also have that $v_\infty$ is unramified and split if and only if $\phi_{v_\infty}(\mathcal O_{v_\infty}^\times)=0$ and
\[\rho_\ell^{-(\deg v_\infty) \psi_\infty(\pi_\infty)}=1,\]
since $\deg v_\infty=1$.

We claim that the Dirichlet series for cyclic extensions splitting at a fixed place $v_0\not = v_\infty$ is
\begin{eqnarray*}
\mathcal{F}_S(s)
 &=& \frac{1}{\ell^2} \sum_{j=0}^{\ell-1} \sum_{k=0}^{\ell-1} \sum_{r=0}^{\ell - 1} \rho_\ell^{-rk \deg{v_0}} \\
&& \times \prod_{v \neq v_0,v_\infty} \left(1+(\xi_\ell^{j\deg v} \chi_{v, \ell}(v_0)^k +\cdots
+\xi_\ell^{(\ell-1)j\deg v} \chi_{v,\ell}(v_0)^{(\ell-1)k} )Nv^{-(\ell-1)s}\right)\\
&&\times \left(1+(\xi_\ell^{j\deg v_\infty}  +\cdots
+\xi_\ell^{(\ell-1)j\deg v_\infty} )Nv_\infty^{-(\ell-1)s}\right).\\
\end{eqnarray*}
Recall  by Propositions \ref{unram}, \ref{globalccresult} and \ref{localmaps} the cyclic  extensions splitting at a fixed place $v_0\neq v_\infty$ are in one-to-one correspondence with the maps $\phi: \pi_\infty ^\Z \times \prod_v \co_v^\times \to  \Z/\ell\Z$  satisfying
\eqref{eq:modell}, together with  the splitting conditions \eqref{eqn:split} and $\phi_{v_0}(\mathcal O_{v_0}^\times)=0$.
Let $\cond(\phi)$ be the conductor of such a map $\phi$, and $v$ a place of the conductor. For each fixed $j,k,r$ in the first line above,
the $i$th term $\rho_\ell^{-rk \deg{v_0}} \xi_\ell^{ij\deg v}  \chi_{v,\ell}(v_0)^{ik} Nv^{-(\ell-1)s}$ in the Euler product corresponds to the map where
$\phi_{v}(g_v)=i$   and $\psi_\infty(\pi_\infty) = r$, for $1 \leq i \leq \ell-1$.  Considering all the places $v$ of $\cond(\phi)$ (including $v_\infty$, which is accounted for in the last line of the equation),
the term in the $j,k,r$th Dirichlet series above corresponding to the global map
$\phi$ equals
\[
\left(\xi_\ell^{\sum_v j \phi_v(g_v)\deg v} \right) \times \rho_\ell^{-rk \deg{v_0}} \prod_{v\not = v_0, v_\infty} \chi_{v, \ell}(v_0)^{k \phi_v(g_v)} \times N(\cond(\phi))^{-(\ell-1)s}.
\]
Summing over $j$, we obtain zero unless condition \eqref{eq:modell} is satisfied. Summing over $r$ covers all the possible values of $\psi_\infty(\pi_\infty)$.
Finally, summing over $k$ yields zero unless condition \eqref{eqn:split} is satisfied. Thus the sum of those terms over $k,j$, together with the correcting factor $\frac{1}{\ell^2}$
will yield
$N(\cond(\phi))^{-(\ell-1)s}$ if both conditions \eqref{eq:modell} and \eqref{eqn:split} are satisfied and zero otherwise.

We also remark that the constant term of $\mathcal{F}_S(s)$ is $\ell$ if $\ell \mid \deg{v_0}$ and $1$ otherwise.

When $v_0=v_\infty$, we have,
\begin{eqnarray*}
\mathcal{F}_S(s)
 &=& \frac{1}{\ell^2} \sum_{j=0}^{\ell-1} \sum_{k=0}^{\ell-1} \sum_{r=0}^{\ell - 1} \rho_\ell^{-rk \deg{v_\infty}} \prod_{v \neq v_\infty} \left(1+(\xi_\ell^{j\deg v}  +\cdots
+\xi_\ell^{(\ell-1)j\deg v} )Nv^{-(\ell-1)s}\right)\\
&=&\frac{1}{\ell} \sum_{j=0}^{\ell-1} \prod_{v \neq v_\infty} \left(1+(\xi_\ell^{j\deg v}  +\cdots
+\xi_\ell^{(\ell-1)j\deg v} )Nv^{-(\ell-1)s}\right)
= \frac{1}{\ell} \mathcal{F}_U(s).
\end{eqnarray*}

By considering the definitions of $\chi_{v_\infty}(v)$ and $\chi_{v}(v_\infty)$, the previous two formulas can both be  written as
\begin{eqnarray*}\label{F-split}
\mathcal{F}_S(s)
 &=& \frac{1}{\ell^2} \sum_{j=0}^{\ell-1} \sum_{k=0}^{\ell-1} \sum_{r=0}^{\ell - 1} \rho_\ell^{-rk \deg{v_0}} \\
 && \times \prod_{v \neq v_0} \left(1+(\xi_\ell^{j\deg v} \chi_{v, \ell}(v_0)^k +\cdots
+\xi_\ell^{(\ell-1)j\deg v} \chi_{v,\ell}(v_0)^{(\ell-1)k} )Nv^{-(\ell-1)s}\right),\\
\end{eqnarray*}
which is valid for any place $v_0$.

Separating the term with $k=0$ from the terms with $k\not = 0$, we obtain,
\begin{eqnarray*}
\mathcal{F}_S(s)
 &=& \frac{1}{\ell} \mathcal{F}_U(s)\\
 &&+\frac{1}{\ell^2} \sum_{j=0}^{\ell-1} \sum_{k=1}^{\ell-1} \left(\sum_{r=0}^{\ell - 1} \rho_\ell^{-rk \deg{v_0}}\right) \mathcal{M}_{j,k} (s,v_0,\mathrm{split}),\\
\end{eqnarray*}
where $\mathcal{M}_{j,k} (s,v_0,\mathrm{split})$ is given by \eqref{msimple}.

Applying
Theorem \ref{thm:Tauberian} and Lemmas \ref{lem-generalM} and \ref{chantalmagictrivial} to the generating function $\mathcal{F}_S(s)$, we get
\begin{eqnarray*}N(\Z/\ell\Z, \od, v_0, \text{split}) &=&
-\frac{1}{\ell} \mathrm{Res}_{u=q^{-1}}\frac{1}{1+(\ell-1)u^{\deg v_0}}\frac{A(u)}{u^{\od+1}}\\
&&- \frac{\ell-1}{\ell} \sum_{j=1}^{\ell-1}\mathrm{Res}_{u=(\xi_\ell^jq)^{-1}}\frac{1}{\ell(1+b(v_0)u^{\deg
v_0})}\frac{B(u)}{u^{\od+1}} \\
&&+ O \left(
q^{(1/2 + \varepsilon) \od}  \right).\\
\end{eqnarray*}

As before, the residue involving the function $A(u)$ yields $q^\od$ times a polynomial in $\od$ of degree $\ell-2$. The main term
when $\od$ goes infinity is given by the leading term of the polynomial, and is
\begin{align*}
 \mathrm{Res}_{u=q^{-1}} \frac{1}{\ell(1+(\ell-1)u^{\deg v_0})}\frac{A(u)}{u^{\od+1}} = - C_\ell q^\od \od^{\ell-2} \frac{1}{\ell(1+(\ell-1)q^{-\deg v_0})}.
\end{align*}

Similarly the value of
\[\mathrm{Res}_{u=(\xi_\ell^jq)^{-1}}\frac{1}{\ell(1+b(v_0)u^{\deg
v_0})}\frac{B(u)}{u^{\od+1}} \]
is $O(q^\od)$ and it contributes to the constant coefficient of $P_S(\od)$.

We now consider the Dirichlet series for cyclic extensions for which a fixed place $v_0$ is inert. It is given by
\begin{eqnarray*}
\mathcal{F}_I(s)
 &=& \mathcal{F}_U(s)-\mathcal{F}_S(s)\\
 &=&\sum_{j=0}^{\ell-1}\prod_{v\not = v_0}\left(1+\left(\xi_\ell^{j\deg v}+\cdots+\xi_\ell^{(\ell-1)j\deg v}\right)Nv^{-(\ell-1)s}\right)\\
 && \hspace{-1.5cm} -\frac{1}{\ell^2} \sum_{j=0}^{\ell-1} \sum_{k=0}^{\ell-1} \sum_{r=0}^{\ell - 1} \rho_\ell^{-rk \deg{v_0}} \prod_{v \neq v_0} \left(1+(\xi_\ell^{j\deg v} \chi_{v, \ell}(v_0)^k +\cdots
+\xi_\ell^{(\ell-1)j\deg v} \chi_{v,\ell}(v_0)^{(\ell-1)k} )Nv^{-(\ell-1)s}\right)\\
&=&\frac{(\ell-1)}{\ell}\sum_{j=0}^{\ell-1}\prod_{v\not = v_0}\left(1+\left(\xi_\ell^{j\deg v}+\cdots+\xi_\ell^{(\ell-1)j\deg v}\right)Nv^{-(\ell-1)s}\right)\\
 && \hspace{-1.5cm} -\frac{1}{\ell^2} \sum_{j=0}^{\ell-1} \sum_{k=1}^{\ell-1} \sum_{r=0}^{\ell - 1} \rho_\ell^{-rk \deg{v_0}} \prod_{v \neq v_0} \left(1+(\xi_\ell^{j\deg v} \chi_{v, \ell}(v_0)^k +\cdots
+\xi_\ell^{(\ell-1)j\deg v} \chi_{v,\ell}(v_0)^{(\ell-1)k} )Nv^{-(\ell-1)s}\right).\\
\end{eqnarray*}

The main term is given by
\begin{align*} \frac{(\ell-1)}{\ell}\mathcal F_{U}(s)
&=\frac{(\ell-1)}{\ell}\left(\frac{1}{1+\left(\ell-1\right) Nv_0^{-(\ell-1)s}}\mathcal{A}(s)+\frac{\ell-1}{1+b(v_0) Nv_0^{-(\ell-1)s}}\mathcal{B}(s)\right).\\
\end{align*}

The proof proceeds exactly as in the split case.  In particular, this proves that
\[N(\Z/\ell\Z, \od, v_0, \text{inert})=(\ell-1)N(\Z/\ell\Z, \od, v_0, \text{split})+ O \left(
q^{(1/2 + \varepsilon)  \od} \right).\]
This concludes the proof of  Corollary \ref{thm:specialcase}.
\end{proof}

We are now ready to prove the main result.
\begin{thm} \label{thm-general-statement}
Let $\mathcal{V}_R, \mathcal{V}_S, \mathcal{V}_I$ be three finite and disjoint sets of places of $K$.
Let $$N(\Z/\ell \Z , \od; \mathcal{V}_R, \mathcal{V}_S,  \mathcal{V}_I)$$ be the number of extensions of $\F_q(X)$ with Galois group $\Z/\ell\Z$ such that
the degree of the conductor is $\od$, and that are ramified at the places of  $\mathcal{V}_R$, (completely) split at the places of
 $\mathcal{V}_S$ and inert at the places of  $\mathcal{V}_I$. Let $\mathcal{V} =
 \mathcal{V}_R  \cup \mathcal{V}_S  \cup \mathcal{V}_I$.
 Then,
\begin{eqnarray*}
N(\Z/\ell \Z , \od; \mathcal{V}_R, \mathcal{V}_S,  \mathcal{V}_I) &=& C_\ell \left(\prod_{v \in
\mathcal V} c_v\right) \; q^\od P_{\mathcal{V}_R, \mathcal{V}_S,  \mathcal{V}_I}(\od)  +  O \left( q^{\left( \frac{1}{2} + \varepsilon \right) \od}
\right),
\end{eqnarray*}
{and}
\begin{eqnarray*}
\frac{N(\Z/\ell\Z, \od; \mathcal{V}_R, \mathcal{V}_S,  \mathcal{V}_I)}{N(\Z/\ell\Z, \od)} &=&
\left( \prod_{v \in \mathcal{V}} c_v \right) \left (1+ O \left( \frac{1}{\od} \right) \right),
\end{eqnarray*}
where $P_{\mathcal{V}_R, \mathcal{V}_S,  \mathcal{V}_I}(X)  \in \R[X]$ is a monic polynomial of degree
$\ell-2$ and $C_\ell$ is  given by
\begin{eqnarray*}
 C_\ell =   \frac{(1-q^{-2})^{\ell-1}}{(\ell-2)!}
\prod_{j=1}^{\ell-2}  \prod_{v\in \mathcal{V}_K} \left(1-\frac{jq^{-2\deg v}}{(1+q^{-\deg
v})(1+jq^{-\deg v})}\right). \end{eqnarray*}
In addition,
$$c_{v} = \begin{cases} \displaystyle  \frac{(\ell-1)q^{-\deg
v}}{1+(\ell-1)q^{-\deg v}} & \mbox{if $v \in {\mathcal V}_R$, }\\
\displaystyle \frac{1}{\ell(1+(\ell-1)q^{-\deg{v}})} & \mbox{if $v \in {\mathcal V}_S$, }\\
\displaystyle \frac{\ell-1}{\ell(1+(\ell-1)q^{-\deg{v}})} & \mbox{if
$v  \in {\mathcal V}_I$. }\end{cases}$$
 \end{thm}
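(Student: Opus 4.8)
The plan is to mimic, with all conditions imposed at once, the argument used for Corollary~\ref{thm:specialcase}. First I would assemble a single generating Dirichlet series
\[
\mathcal{F}_{\mathcal{V}_R,\mathcal{V}_S,\mathcal{V}_I}(s)=\sum_{f\in\mathcal{D}_K^+}\frac{a_\ell(f,\mathcal{V}_R,\mathcal{V}_S,\mathcal{V}_I)}{N\!f^{(\ell-1)s}}
\]
(together with the appropriate constant term accounting for the unramified covers, exactly as the constant $\ell$ was added in the proof of Theorem~\ref{caseL}). Using Propositions~\ref{unram}, \ref{globalccresult} and~\ref{localmaps}, I encode each local condition into the Euler product attached to the maps $\phi$ of~\eqref{countmaps} satisfying~\eqref{eq:modell}: the global compatibility~\eqref{eq:modell} is detected by $\frac1\ell\sum_{j=0}^{\ell-1}\xi_\ell^{j\sum_v\phi_v(g_v)\deg v}$; the value $\psi_\infty(\pi_\infty)=r$ by a sum over $r\in\Z/\ell\Z$; ramification at $v\in\mathcal{V}_R$ by keeping only the Euler factor terms with $\phi_v(g_v)\neq 0$; and for $v\in\mathcal{V}_S\cup\mathcal{V}_I$ the condition $\phi_v(\co_v^\times)=0$ together with the split/inert dichotomy, detected by the characteristic functions $\frac1\ell\sum_{k_v}D(v)^{k_v}$ (split) and $1-\frac1\ell\sum_{k_v}D(v)^{k_v}$ (inert), just as in the derivation of $\mathcal{F}_S(s)$ and $\mathcal{F}_I(s)$. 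Expanding all these finite sums, $\mathcal{F}_{\mathcal{V}_R,\mathcal{V}_S,\mathcal{V}_I}(s)$ becomes a finite $\C$-linear combination of Euler products indexed by $j\in\Z/\ell\Z$, by $r\in\Z/\ell\Z$, and by tuples $(k_v)_{v\in\mathcal{V}_S\cup\mathcal{V}_I}$ with $0\le k_v\le\ell-1$, with harmless coefficients of the form $\rho_\ell^{-r\sum k_v\deg v}$.

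Next I would isolate the main term. The summands with $k_v=0$ for all $v\in\mathcal{V}_S\cup\mathcal{V}_I$ reassemble, after summing over $r$, into $\sum_{j=0}^{\ell-1}$ of a modification of the Euler products defining $\mathcal{F}(s)=\mathcal{A}(s)+(\ell-1)\mathcal{B}(s)$ in which the factor at each $v\in\mathcal V$ is replaced by its ramified, split, or inert version; for the $j=0$ piece this replaces $\mathcal{A}(s)$ by $\big(\prod_{v\in\mathcal V}c_v(s)\big)\mathcal{A}(s)$, where
\[
c_v(s)=\begin{cases}\dfrac{(\ell-1)Nv^{-(\ell-1)s}}{1+(\ell-1)Nv^{-(\ell-1)s}},& v\in\mathcal{V}_R,\\[2mm]\dfrac{1}{\ell\left(1+(\ell-1)Nv^{-(\ell-1)s}\right)},& v\in\mathcal{V}_S,\\[2mm]\dfrac{\ell-1}{\ell\left(1+(\ell-1)Nv^{-(\ell-1)s}\right)},& v\in\mathcal{V}_I,\end{cases}
\]
and $c_v$ is the value of $c_v(s)$ at $s=1/(\ell-1)$; the $j\neq 0$ pieces produce analogously modified copies of $\mathcal{B}(s)$ (with the $b(v)$'s appearing as in the single-place case). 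By Lemma~\ref{chantalmagictrivial} the modified $\mathcal{A}(s)$ still has a pole of order $\ell-1$ at $s=1/(\ell-1)$ and the modified $\mathcal{B}(s)$ only simple poles on $\re(s)=1/(\ell-1)$, so the dominant contribution is carried by the modified $\mathcal{A}(s)$. The summands with some $k_v\neq 0$ are precisely (products of) the functions ${\mathcal M}_{j,(k_v)}(s;\mathcal{V}_R,\mathcal{V}_S,\mathcal{V}_U)$ of Lemma~\ref{lem-generalM}, with $\mathcal{V}_U=\mathcal{V}_S\cup\mathcal{V}_I$, up to the finitely many Euler factors at places of $\mathcal V$; by that lemma they are holomorphic on $\re(s)>\tfrac{1}{2(\ell-1)}$.

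Then I would apply Theorem~\ref{thm:Tauberian} with $k=\ell-1$ and $\delta=\tfrac{1}{2(\ell-1)}+\varepsilon$, using Lemmas~\ref{chantalmagictrivial} and~\ref{lem-generalM} for the bound on the line $\re(s)=\delta$. Setting $u=q^{-(\ell-1)s}$, the pole at $u=q^{-1}$ of the modified $A(u)/u^{\od+1}$ produces $q^\od$ times a polynomial in $\od$ of degree $\ell-2$ whose leading coefficient, computed exactly as in the proof of Theorem~\ref{caseL}, equals $C_\ell\prod_{v\in\mathcal V}c_v$; the simple poles at $u=(q\xi_\ell^j)^{-1}$ contribute $O(q^\od)$ to the lower-order coefficients of $P_{\mathcal{V}_R,\mathcal{V}_S,\mathcal{V}_I}$, and the ${\mathcal M}$-terms contribute $O(q^{(1/2+\varepsilon)\od})$. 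Dividing by $N(\Z/\ell\Z,\od)=C_\ell q^\od P_\ell(\od)+O(q^{(1/2+\varepsilon)\od})$, with $P_\ell$ and $P_{\mathcal{V}_R,\mathcal{V}_S,\mathcal{V}_I}$ both monic of degree $\ell-2$, yields the ratio statement.

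I expect the main obstacle to be the bookkeeping of the interactions between several split and inert conditions: the quantity $D(v_i)$ for one distinguished place involves the residue symbols $\chi_{v,\ell}(v_i)$ at \emph{all} other places, including the other distinguished places, so the expanded Euler products carry, at a generic place $v$, a character of the shape $\prod_{v_i\in\mathcal{V}_S\cup\mathcal{V}_I}\chi_{v,\ell}(v_i)^{k_i\phi_v(g_v)}$, and one must verify, via $\ell$-power reciprocity as in Lemma~\ref{lem-generalM}, that for $(k_v)\neq 0$ this is a \emph{nontrivial} Dirichlet character, so that the associated twisted $L$-function is entire. One must also treat correctly the finitely many Euler factors at the places of $\mathcal V$ themselves (where a $\rho_\ell$-power appears and the unit part of $\phi_v$ is constrained, or the ramified terms are retained) and the place at infinity when it lies in $\mathcal V$, exactly as was done for a single place in the proof of Corollary~\ref{thm:specialcase}.
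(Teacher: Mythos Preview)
Your proposal is correct and follows essentially the same approach as the paper: translate to maps via Propositions~\ref{unram}--\ref{localmaps}, detect the global compatibility with a sum over $j$, detect split/inert via the quantities $D(v)$ and sums over $k_v$, separate the $(k_v)=0$ piece (giving $\bigl(\prod_{v}c_v(s)\bigr)\mathcal{A}(s)$ plus modified $\mathcal{B}$-terms) from the $(k_v)\neq 0$ pieces handled by Lemma~\ref{lem-generalM}, and then apply Theorem~\ref{thm:Tauberian} with Lemma~\ref{chantalmagictrivial}. The only organizational difference is that you impose the inert condition directly via the characteristic function $1-\tfrac{1}{\ell}\sum_{k_v}D(v)^{k_v}$ (so your expansion is indexed by $(k_v)_{v\in\mathcal{V}_S\cup\mathcal{V}_I}$), whereas the paper first builds the series $\mathcal{F}_{\mathcal{V}_R,\mathcal{V}_S\subset\mathcal{V}_U}(s)$ with only ramified/unramified/split conditions (indexed by $(k_h)_{v_h\in\mathcal{V}_S}$) and recovers inert afterwards by the inclusion--exclusion $\sum_{\tilde{\mathcal{V}}_I\subset\mathcal{V}_I}(-1)^{|\tilde{\mathcal{V}}_I|}N'(\cdot)$; expanding your product $\prod_{v\in\mathcal{V}_I}\bigl(1-\tfrac{1}{\ell}\sum_{k_v}D(v)^{k_v}\bigr)$ is precisely that inclusion--exclusion, so the two computations coincide term by term.
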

\begin{proof}
Let $\mathcal{V}_U=\mathcal{V}_S\cup \mathcal{V}_I$.

We first construct the Dirichlet generating series with prescribed conditions for $\mathcal{V}_R$, $\mathcal{V}_U$, and $\mathcal{V}_S=\{v_1,\dots,v_n\} \subset \mathcal{V}_U$. In other words,
for the elements $v\in \mathcal{V}_I$ we will only prescribe that they are in $\mathcal{V}_U$ and we will ignore the inert condition for the moment.
We claim that the generating series is then
\begin{eqnarray*}
&&\mathcal{F}_{\mathcal{V}_R,\mathcal{V}_S \subset \mathcal{V}_U}(s)\\
 &&= \frac{1}{\ell^{n+1}} \sum_{j=0}^{\ell-1} \sum_{k_1=0}^{\ell-1} \dots \sum_{k_n=0}^{\ell-1} \sum_{r=0}^{\ell - 1} \rho_\ell^{-r \sum_{h=1}^{n} k_h \deg{v_h}} \\
&& \times \prod_{v \not\in {\mathcal V}_R \cup {\mathcal V}_U} \left(1+\left(\xi_\ell^{j\deg v} \prod_{h=1}^n \chi_{v, \ell}(v_h)^{k_h} +\cdots
+\xi_\ell^{(\ell-1)j\deg v} \prod_{h=1}^n \chi_{v,\ell}(v_h)^{(\ell-1)k_h} \right)Nv^{-(\ell-1)s}\right)\\
&& \times \prod_{v \in {\mathcal V}_R} \left( \xi_\ell^{j\deg v} +\cdots
+\xi_\ell^{(\ell-1)j\deg v} \right) Nv^{-(\ell-1)s}.
\end{eqnarray*}
Let us prove that the above formula is correct. Recall  by Propositions \ref{unram}, \ref{globalccresult} and \ref{localmaps} the cyclic  extensions
which are ramified at the primes of ${\mathcal V}_R$, unramified at the primes of ${\mathcal V}_U$ and split at the primes of ${\mathcal V}_S$ are in 
one-to-one correspondence with the maps $\phi: \pi_\infty ^\Z \times \prod_v \co_v^\times \to  \Z/\ell\Z$  satisfying
\eqref{eq:modell}, together with  the ramification conditions: $\phi_v$ is nontrivial on $\mathcal{O}_v^\times$ for $v \in \mathcal{V}_R$, $\phi_v(\mathcal{O}_v^\times)=0$ for $v\in \mathcal{V}_U$, 
and the splitting conditions \eqref{eqn:split} for $v \in \mathcal{V}_S$. Thus consider all the possible choices of parameters $\left( \left\{ r_v \right\}, r \right)$ where $\phi$ is
determined by setting  $\phi_v(g_v)=r_v$ and $\psi_\infty(\pi_\infty)=r$ in such a way that 
$\phi$ is ramified at the primes of ${\mathcal V}_R$, unramified at the primes of ${\mathcal V}_U$ and split at the primes of
${\mathcal V}_S$. Therefore we have $0 < r_v \leq \ell-1$ for all $v \in {\mathcal V}_R$, and $r_v=0$ for all primes of ${\mathcal V}_U$.
For each fixed $j, k_1, \dots, k_n$, the map $\phi$ with parameters $\left( \left\{ r_v \right\}, r \right)$ corresponds to the component
\[\left( \prod_{{v \not\in {\mathcal V}_R \cup {\mathcal V}_U}} \rho_\ell^{-r \sum_{h=1}^n k_h \deg{v_h}}
\xi_\ell^{j r_v \deg{v}} \prod_{h=1}^n \chi_{v,\ell}(v_h)^{r_v k_h} \right) \times \left(
\prod_{ {v \in {\mathcal V}_R}} \xi_\ell^{r_v j \deg{v}} \right) \times N(\text{Cond}(\phi))^{-(\ell-1)s}\]
of the Euler product. Summing over all $j, k_1, \dots, k_n$, we obtain that the coefficient of $N(\text{Cond}(\phi))^{-(\ell-1)s}$ is given by
\begin{eqnarray*}
&&\left(\sum_{k_1=0}^{\ell-1} \rho_\ell^{-r k_1\deg{v_1}}   \prod_{v} \chi_{v,\ell}(v_1)^{r_v k_1}\right) \times
\dots \times \left(\sum_{k_n=0}^{\ell-1} \rho_\ell^{-r k_n \deg{v_n}}   \prod_{v} \chi_{v,\ell}(v_n)^{r_v k_n}\right) \\
&&\times
\left(\sum_{j=0}^{\ell-1} \xi_\ell^{j \sum_{v \mid \text{Cond}(\phi)} r_v \deg{v}}\right)\\
&&= \begin{cases} \ell^{n+1} & \mbox{if $\sum_{v \mid \text{Cond}(\phi)} r_v \deg{v} \equiv 0 \pmod \ell$ and $\phi$ is split at $v_1, \dots, v_n$,} \\
0 & \mbox{otherwise}. \end{cases}
\end{eqnarray*}

We now write the
generating series as $\mathcal{F}_{\mathcal{V}_R,\mathcal{V}_S \subset \mathcal{V}_U}(s) = \mathcal{F}^{1}_{\mathcal{V}_R,\mathcal{V}_S \subset \mathcal{V}_U}(s) + \mathcal{F}^{2}_{\mathcal{V}_R,\mathcal{V}_S \subset \mathcal{V}_U}(s)$,
where the first series contributes to the main term and the second to the error term. Taking $(k_1, \dots, k_n) = (0, \dots, 0)$ in $\mathcal{F}_{\mathcal{V}_R,\mathcal{V}_S \subset \mathcal{V}_U}(s)$, we have
\begin{eqnarray*}
\mathcal{F}^{1}_{\mathcal{V}_R,\mathcal{V}_S \subset \mathcal{V}_U}(s)
 &=& \frac{1}{\ell^{n}} \left( \prod_{v \in {\mathcal V}_R} \frac{(\ell-1) Nv^{-(\ell-1)s}}{1 + (\ell-1) Nv^{-(\ell-1)s}}
 \prod_{v \in {\mathcal V}_U}  \frac{1}{1 + (\ell-1) Nv^{-(\ell-1)s}}  {\mathcal A}(s) \right. \\
 && \left. + (\ell-1)
 \prod_{v \in {\mathcal V}_R} \frac{b(v) Nv^{-(\ell-1)s}}{1 + b(v) Nv^{-(\ell-1)s}}
 \prod_{v \in {\mathcal V}_U}  \frac{1}{1 + b(v) Nv^{-(\ell-1)s}}  {\mathcal B}(s)
 \right),
\end{eqnarray*}
where as usual $j=0$ gives the function ${\mathcal A}(s)$ defined by \eqref{def-as} and the other values of $j$ give $\ell-1$ copies of the
function ${\mathcal B}(s)$ defined by \eqref{def-bs}.

Taking $(k_1, \dots, k_n) \neq (0, \dots, 0)$ in $\mathcal{F}_{\mathcal{V}_R,\mathcal{V}_S \subset \mathcal{V}_U}(s)$, we have
\begin{eqnarray*}
\mathcal{F}^{2}_{\mathcal{V}_R,\mathcal{V}_S \subset \mathcal{V}_U}(s)
 &=& \frac{1}{\ell^{n+1}} \sum_{j=0}^{\ell-1} \sum_{{k_1, \dots, k_n = 0}\atop{(k_1, \dots, k_n) \neq (0, \dots, 0)}}^{\ell-1}
 G(s) \; M_{j, k_1, \dots, k_n}(s; {\mathcal V}_R, {\mathcal V}_S, {\mathcal V}_U)
\end{eqnarray*}
where $$G(s) =
\sum_{r = 0}^{\ell-1} \rho_\ell^{-r \sum_{h=1}^n k_h \deg{v_h}} \prod_{v \in {\mathcal V}_R} \left( \xi_\ell^{j\deg v} +\cdots
+\xi_\ell^{(\ell-1)j\deg v} \right) Nv^{-(\ell-1)s}$$
is analytic for all $s \in \C$, and
where for  each fixed vector $(k_1, \dots, k_n) \neq (0, \dots, 0)$, and for each $0 \leq j \leq \ell-1$, we have that
\begin{eqnarray*}
&& {\mathcal M}_{j, k_1, \dots, k_n}(s; {\mathcal V}_R, {\mathcal V}_S, {\mathcal V}_U) \\ &=&
\prod_{v \not\in {\mathcal V}_R \cup {\mathcal V}_U} \left(1+\left(\xi_\ell^{j\deg v} \prod_{h=1}^n \chi_{v, \ell}(v_h)^{k_h} +\cdots
+\xi_\ell^{(\ell-1)j\deg v} \prod_{h=1}^n \chi_{v,\ell}(v_h)^{(\ell-1)k_h} \right)Nv^{-(\ell-1)s}\right)
\end{eqnarray*}
as defined in Lemma \ref{lem-generalM}.

Let
$N'(\Z/\ell \Z, \od; \mathcal{V}_R, \mathcal{V}_S, \mathcal{V}_I)$ be the number of extensions where the degree of the conductor is $\od$ and with the prescribed ramification conditions at the primes of ${\mathcal V}_R$ and ${\mathcal V}_S$, and unramified at the primes of ${\mathcal V}_I$, i.e. the extensions counted by the generating series
$\mathcal{F}_{\mathcal{V}_R,\mathcal{V}_S \subset \mathcal{V}_U}(s)$ above.
By  Theorem \ref{thm:Tauberian}, and Lemmas \ref{lem-generalM} and \ref{chantalmagictrivial},
\begin{eqnarray*}&& N'(\Z/\ell \Z, \od; \mathcal{V}_R, \mathcal{V}_S, \mathcal{V}_I) = \\ && -\frac{1}{\ell^n} \left(
\mathrm{Res}_{u=q^{-1}}
\prod_{v \in {\mathcal V}_R} \frac{(\ell-1) u^{\deg{v}}}{1+(\ell-1)u^{\deg v}}
\prod_{v \in {\mathcal V}_U} \frac{1}{1+(\ell-1)u^{\deg v}}
\frac{A(u)}{u^{\od+1}} \right. \\
&&\left. + (\ell-1)\sum_{j=1}^{\ell-1}\mathrm{Res}_{u=(\xi_\ell^jq)^{-1}}
\prod_{v \in {\mathcal V}_R} \frac{b(v) u^{\deg{v}}}{1+b(v)u^{\deg v}}
\prod_{v \in {\mathcal V}_U} \frac{1}{1+b(v)u^{\deg v}}
\frac{B(u)}{u^{\od+1}} \right) \\
&&+ O\left(
q^{(1/2 + \varepsilon) \od} \right).\\
\end{eqnarray*}

As before, the residue involving the function $A(u)$ yields $q^{\od}$ times a polynomial in $\od$ of degree $\ell-2$, and the residues
of $B(u)$ are $O(q^\od)$, so they contribute to the constant coefficient of the polynomial, and not to the main term.
The main term
when $\od$ tends to infinity is then given by the leading term of the polynomial which is
\begin{align}
\nonumber
&&-\frac{1}{\ell^n} \left(
\mathrm{Res}_{u=q^{-1}}
\prod_{v \in {\mathcal V}_R} \frac{(\ell-1) u^{\deg{v}}}{1+(\ell-1)u^{\deg v}}
\prod_{v \in {\mathcal V}_U} \frac{1}{1+(\ell-1)u^{\deg v}}
\frac{A(u)}{u^{\od+1}} \right) \\ \label{residue-line2}
&&= \frac{1}{\ell^n}
\prod_{v \in {\mathcal V}_R} \frac{(\ell-1) q^{-\deg{v}}}{1+(\ell-1)q^{-\deg v}}
\prod_{v \in {\mathcal V}_U} \frac{1}{1+(\ell-1)q^{-\deg v}} \;\;
C_\ell q^{\od} \od^{\ell-2}.
\end{align}

We now proceed to add the conditions at the primes of $\mathcal{V}_I = \mathcal{V}_U\setminus \mathcal{V}_S$. Using inclusion-exclusion,
it is easy to see that
\begin{eqnarray} \label{inclusion-exclusion}
N(\Z/\ell \Z, \od; \mathcal{V}_R, \mathcal{V}_S, \mathcal{V}_I) = \sum_{\tilde{\mathcal{V}}_I\subset \mathcal{V}_I} {(-1)^{|\tilde{\mathcal{V}}_I|}} N'(\Z/\ell \Z, \od; \mathcal{V}_R, \mathcal{V}_S \cup \tilde{\mathcal{V}}_I, \mathcal{V}_I \setminus
\tilde{\mathcal{V}}_I).
\end{eqnarray}
We can rewrite the above equation in terms of the generating series. Let
$\mathcal{F}_{\mathcal{V}_R,\mathcal{V}_S,\mathcal{V}_I}(s)$ be the generating series for the extensions counted
by $N(\Z/\ell \Z, \od; \mathcal{V}_R, \mathcal{V}_S, \mathcal{V}_I)$.
Then, it follows from \eqref{inclusion-exclusion} that
\begin{eqnarray*}
\mathcal{F}_{\mathcal{V}_R,\mathcal{V}_S,\mathcal{V}_I}(s) &=& \sum_{\tilde{\mathcal{V}}_I\subset \mathcal{V}_I} {(-1)^{|\tilde{\mathcal{V}}_I|}}
{\mathcal F}_{\mathcal{V}_R, \mathcal{V}_S \cup \tilde{\mathcal{V}}_I \subset \mathcal{V}_U}(s)
\\ &=& \sum_{\tilde{\mathcal{V}}_I\subset \mathcal{V}_I} {(-1)^{|\tilde{\mathcal{V}}_I|}} \left(
{\mathcal F}_{\mathcal{V}_R, \mathcal{V}_S \cup \tilde{\mathcal{V}}_I \subset \mathcal{V}_U}^{1}(s) +
{\mathcal F}_{\mathcal{V}_R, \mathcal{V}_S \cup \tilde{\mathcal{V}}_I \subset \mathcal{V}_U}^{2}(s) \right),
\end{eqnarray*}
and the main term will be given by the sum of the poles of the generating series ${\mathcal F}_{\mathcal{V}_R, \mathcal{V}_S \cup \tilde{\mathcal{V}}_I \subset \mathcal{V}_U}^{1}(s)$. Using \eqref{residue-line2}, this is given by
\begin{eqnarray*}
&& C_\ell q^{\od} \od^{\ell-2}
\left( \sum_{\tilde{\mathcal{V}}_I\subset \mathcal{V}_I} \frac{(-1)^{|\tilde{\mathcal{V}}_I|}}{\ell^{|{\mathcal V}_S| \cup |\tilde{\mathcal V}_I}|} \prod_{v \in {\mathcal V}_R} \frac{(\ell-1) q^{-\deg{v}}}{1 + (\ell-1)  q^{-\deg{v}}}
 \prod_{v \in {\mathcal V}_U}  \frac{1}{1 + (\ell-1) q^{-\deg{v}}}  \right) \\
&=& C_\ell q^{\od} \od^{\ell-2} \left( \frac{1}{\ell} \right)^{|\mathcal{V}_S|}  \left( \frac{\ell-1}{\ell} \right)^{|\mathcal{V}_I|}
\prod_{v \in {\mathcal V}_R} \frac{(\ell-1) q^{-\deg{v}}}{1 + (\ell-1) q^{-\deg{v}}}
 \prod_{v \in {\mathcal V}_U}  \frac{1}{1 + (\ell-1) q^{-\deg{v}}} \\
 &=& C_\ell \left( \prod_{v \in  {\mathcal V}_R  \cup {\mathcal V}_S \cup  {\mathcal V}_I} c_v \right) q^{\od} \od^{\ell-2},
\end{eqnarray*}
where the $c_v$ are as in Theorem \ref{thm-general-statement}.

Dividing the last line by \eqref{ell-denom} completes the proof
of the statement.
\end{proof}

\subsection{Quadratic extensions}\label{section:case2}

We now look specifically at the case $\ell =2$ as we obtain the number of quadratic extensions of $K$ with conductor $\od$ with no error term, and the ramified case with a better error term without using the Tauberian theorem. The generating function $\mathcal F$  is
\begin{eqnarray*} \mathcal{F}(s)= 2 +\sum_{\Gal(L/K) \cong \Z/2\Z}
\mathfrak{D}(L/K)^{-s} =\prod_{v} \left(1+Nv^{-s}\right)+ \prod_{v}
\left(1+(-1)^{\deg v}Nv^{-s}\right).\end{eqnarray*}

In this case,
\begin{align*}
\mathcal{A}(s)&=\prod_{v} \left(1+Nv^{-s}\right)=\prod_{v}
\frac{\left(1-Nv^{-2s}\right)}{\left(1-Nv^{-s}\right)}\\
&=\frac{\zeta_K(s)}{\zeta_K(2s)}
=\frac{(1-q^{1-2s})(1+q^{-s})}{1-q^{1-s}}.
\end{align*}
After making the change of variables $u=q^{-s}$, we obtain
\[A(u):=\frac{(1-qu^2)(1+u)}{1-qu}.\]

Analogously,
\[\mathcal{B}(s)=\prod_{v} \left(1+(-1)^{\deg
v}Nv^{-s}\right)=\frac{(1-q^{1-2s})(1-q^{-s})}{1+q^{1-s}}\]
which equals $A(-u)$ after the change of variables $u=q^{-s}$. Then,
\begin{eqnarray*}
F(u) &=& A(u)+ A(-u)\\
&=& (1-q u^2)\left(\frac{1+u}{1-qu}
+\frac{1-u}{1+qu}\right) .\end{eqnarray*}

By identifying the coefficients in the power series expansion in $u$ of the
above rational function for $\od >0$ with the coefficients of
\[2+\sum_{\od=1}^\infty N(\Z/2\Z,\od) u^\od,\]
we finally obtain that
\begin{eqnarray} \nonumber
N(\Z/2\Z,\od) &=& \begin{cases} \left(1+ (-1)^\od\right) (q^\od  - q^{\od-2}) &
\od \geq 3,\\
2q^2 & \od =2, \\
0 & \od =1
\end{cases}   \\ \label{denominator}
&=&  \begin{cases} 2(q^\od - q^{\od-2}) & \od >2, \od \textrm{ even},\\
2q^2 & \od =2, \\
0 & \od \textrm{ odd.} \end{cases}\end{eqnarray}

\begin{rem} Recall that the number of square-free monic polynomials of degree
$d>1$ is $q^d - q^{d-1}$. In this case,  we are counting twice the number
of square-free monic polynomials. The counting
happens twice since every monic square-free polynomial $f$ gives two quadratic
extensions corresponding to $K (\sqrt{f}\,)$ and $K(\sqrt{\beta f}\,)$ where
$\beta$ is a non-square in $\F_q^\times$.
\end{rem}

We now proceed to the ramified case.
\begin{eqnarray*}
F_R(u) &=&\frac{u^{\deg v_0}}{1+u^{\deg v_0}}
A(u)+\frac{(-u)^{\deg v_0}}{1+(-u)^{\deg v_0}}
A(-u)\\
&=&(1-q u^2)\left(\frac{u^{\deg v_0}(1+u)}{(1+u^{\deg v_0})(1-qu)}
+\frac{(-u)^{\deg v_0}(1-u)}{(1+(-u)^{\deg v_0})(1+qu)}\right).
\end{eqnarray*}

We have
\begin{eqnarray*}
 A(u)&=&(1-q u^2)\frac{1+u}{1-qu}\\
&=&1+(q+1)u+q^2u^2+\sum_{n=3}^\infty (q^n-q^{n-2})u^n.
\end{eqnarray*}

Thus,
\begin{eqnarray*}
 \frac{u^{\deg v_0}}{1+u^{\deg v_0}}A(u)&=& \left(\sum_{k=1}^\infty (-1)^{k-1}u^{k\deg v_0}\right)\left(1+(q+1)u+q^2u^2+\sum_{n=3}^\infty (q^n-q^{n-2})u^n\right)\\
&=&\sum_{k=1}^\infty (-1)^{k-1}u^{k\deg v_0}+(q+1)\sum_{k=1}^\infty (-1)^{k-1}u^{k\deg v_0+1}+q^2\sum_{k=1}^\infty (-1)^{k-1}u^{k\deg v_0+2}\\
&&+\sum_{k=1}^\infty \sum_{n=3}^\infty (-1)^{k-1}  (q^n-q^{n-2})u^{k\deg v_0+n}\\
&=&\sum_{k=1}^\infty (-1)^{k-1}u^{k\deg v_0}+(q+1)\sum_{k=1}^\infty (-1)^{k-1}u^{k\deg v_0+1}+q^2\sum_{k=1}^\infty (-1)^{k-1}u^{k\deg v_0+2}\\
&&+\sum_{m=3+\deg v_0}^\infty \sum_{k=1}^{\left\lfloor\frac{m-3}{\deg v_0}\right\rfloor} (-1)^{k-1}  (q^{m-k\deg v_0}-q^{m-k\deg v_0-2})u^{m}\\
&=&\sum_{m=3+\deg v_0}^\infty \frac{1-q^{-2}}{1+q^{-\deg v_0}} q^{m-\deg v_0}u^{m}+O_q(1)\sum_{m=\deg v_0}^\infty u^m.\\
\end{eqnarray*}

By identifying the coefficients of $F_R(u)$ with the power series
\[\sum_{\od=1}^\infty N(\Z/2\Z,\od, v_0, \mbox{ramified}) u^\od,\]
we obtain,
\[N(\Z/2\Z, \od, v_0, \mbox{ramified})=\frac{(1-q^{-2})}{1+q^{-\deg v_0}} q^{\od-\deg v_0}+O_q(1).\]

 \section{Distribution of the number of points on covers}

 \label{section-equivalence}

 We explain in this section how the results of this paper apply to the
distribution for the number of $\F_{q}$-points on covers $C$ on the moduli
space $\mathcal H_{g, \ell}$. We prove Theorem \ref{thm-HG} and make a comparison with the results of \cite{bdfl}.

 Consider an $\ell$-cyclic cover $C \to \PP^1$ defined over $\F_q$ and let $L$ be the function field of $C.$ As mentioned in Section \ref{sec:coverext} the genus $g_C$ of the cover $C$ is related to the discriminant  $\disc (L/K)$ via
\[2g_C =   (\ell -1) \left[-2 + \deg \cond(L/K)\right],\]    which implies
 \begin{eqnarray} \label{genus-conductor}
 \od =  \frac{2g_C}{\ell-1} + 2,    \end{eqnarray}
 where $\od$ is the degree of $\cond(L/K).$

Recall that the zeta function of a curve $C$ is given by
\begin{equation} \label{ZC}
Z_C(u) = \exp \left( \sum_{n=1}^\infty \# C(\F_{q^n}) \frac{u^n}{n} \right). \end{equation}

Moreover, \[Z_L(u) = Z_C(u)\] with the usual identification $u = q^{-s}.$

We recall that $\mathcal V_K$ is the set of places of $K$. Suppose that $L/K$ is a
Galois extension. We can write
\begin{eqnarray} \label{ZL}
Z_L(u) = \prod_{v \in \mathcal V_K} \left( 1 - u^{f(v) \deg{v}} \right)^{-r(v)},
\end{eqnarray}
where for each place $v$, we denote by $e(v), f(v), r(v)$ the ramification
degree, the inertia  degree and the number of places of $L$ above $v$
respectively.

Taking the logarithm on both sides of the equality $Z_C(u) = Z_L(u)$ using \eqref{ZC} and \eqref{ZL},
we obtain
\begin{eqnarray*}
\sum_{n=0}^\infty \# C(\F_{q^n}) \frac{u^n}{n} = \sum_{v \in \mathcal V_K}
\sum_{m=1}^{\infty} r(v) \frac{u^{m f(v) \deg{v}}}{m}.
\end{eqnarray*}

Equating the coefficients of $u^n$ on both sides gives
\begin{eqnarray} \label{number-points}
\#C(\F_{q^n}) = \sum_{\substack{v\in \mathcal V_K \\ f(v)  \deg{v} \mid n}}
r(v) f(v)  \deg{v} .\end{eqnarray}

The above discussion implies that the fiber above an $\F_q$-point of $\PP^1$ that corresponds to the place $v$ of degree $1$ of $K$  contains
\[\begin{cases} \ell \textrm { distinct $\F_q$-points}  & \textrm{ if $v$
splits completely,} \\
1 \textrm { $\F_q$-point} & \textrm{ if $v$ ramifies,}\\
0 \textrm{ $\F_q$-points} & \textrm{ if $v$ is inert}.
\end{cases}\]

More generally, a place $v$ of $K$  corresponds to a Galois orbit
of rational points of the same degree of $\PP^1.$ The fiber above each point in the orbit contains
\[\begin{cases} \ell \textrm { distinct points of degree}\, \deg v   & \textrm{ if $v$
splits completely,} \\
1 \textrm { point of degree  } \deg v & \textrm{ if $v$ ramifies,}\\
1 \textrm{ point of degree } \ell \deg v & \textrm{ if $v$ is inert}.
\end{cases}\]

To get the distribution of $\# C(\F_{q})$ over $\mathcal H_{g,\ell}$, we use the relative densities
$$
\frac{ N(\Z / \ell\Z, \frac{2g}{\ell-1} + 2; \mathcal{V}_R, \mathcal{V}_S, \mathcal{V}_I)}{N(\Z/ \ell\Z,\frac{2g}{\ell-1} + 2)}
$$
where we take the sets $\mathcal{V}_R, \mathcal{V}_S, \mathcal{V}_I$ to be mutually disjoint, and such that
$\mathcal{V}_R\cup \mathcal{V}_S\cup \mathcal{V}_I$ is a subset of the set of places of degree 1 in $\CS_K$.

Then,
using \eqref{number-points} with $n=1$ and Theorem \ref{thm:main}, we get
\begin{eqnarray*}
&&\frac{ \left| \left\{ C \in \mathcal H_{g, \ell} (\F_q) \;:\; \#C(\F_q) = m \right\}\right| }{\left| \mathcal H_{g, \ell}(\F_q)\right|} \\ && =
\sum_{\ell|\mathcal{V}_S|+|\mathcal{V}_R| = m} \frac{ N(\Z / \ell\Z, \frac{2g}{\ell-1} + 2; \mathcal{V}_R, \mathcal{V}_S, \mathcal{V}_I)}{N(\Z/ \ell\Z,\frac{2g}{\ell-1} + 2)}\\
&\sim& \sum_{\ell|\mathcal{V}_S|+|\mathcal{V}_R| = m} \left( \frac{\ell-1}{q+\ell-1} \right)^{|\mathcal{V}_R|} \left( \frac{q}{\ell(q + \ell-1)} \right)^{|\mathcal{V}_S|}
 \left( \frac{(\ell-1)q}{\ell(q + \ell-1)} \right)^{q+1-|\mathcal{V}_R|} \\
&=&  \mbox{Prob} \left( \sum_{i=1}^{q+1} X_i = m \right),
\end{eqnarray*}
where the $X_i$ are the random variables of Theorem \ref{thm-HG}.

\subsection{Affine models}\label{subsec:affine}
We compare the results of this paper with the results of \cite{bdfl} concerning the irreducible components $\mathcal H^{(d_1,
\dots, d_\ell)}$ of $\mathcal H_{g, \ell}$.
To describe these components,  we write the covers concretely in terms of affine
models. Each such cover has an affine model of the form
\begin{equation}\label{eq:affinemodel}
C: Y^\ell = f(X) =  \beta f_1 f_2^2 \cdots f_{\ell-1}^{\ell-1}
\end{equation}
where the $f_i \in \F_q[X]$ are monic, square-free, pairwise coprime, of
degrees $d_1, \dots, d_{\ell-1}.$ The degree of the conductor depends on the
degrees $d_1, \dots, d_{\ell -1}$ and whether there is ramification at the
place at infinity.  The ramification at the place at infinity is determined by whether
the total degree of the polynomial is divisible by $\ell$.  When $d_1+\cdots
+(\ell-1) d_{\ell-1}$ is a multiple of $\ell$, then the cover does not ramify at
infinity, otherwise there is ramification at infinity.  In the first case the
degree of the conductor is  $d_1+\cdots +d_{\ell-1}$ and in the second case it
is  $d_1+\cdots +d_{\ell-1} + 1$.

By the Riemann--Hurwitz formula the genus of this cover is given by
\[g_C=(\ell-1)(d_1+\cdots +d_{\ell-1}-2)/2\]
in the first case, and by
\[g_C=(\ell-1)(d_1+\cdots +d_{\ell-1}-1)/2,\]
in the second. Both equations are compatible with the relation \eqref{genus-conductor} between
the genus $g_C$ and the degree of the conductor $\od$.

For a given conductor, each $\beta \in \F_q^\times/(\F_q^\times)^\ell$ yields a different cover given by formula \eqref{eq:affinemodel}.
That is, there is one such extension
for each element of $\F_q^\times/(\F_q^\times)^\ell$. Using the notation from  \cite{bdfl}, we define
\[\mathcal{F}_{(d_1,\dots, d_{\ell-1})} =\{(f_1,\dots, f_{\ell-1}) :  f_i \text{
monic, square-free, pairwise coprime, } \deg f_i=d_i, i=1, \dots, \ell-1 \}.\]

We consider, for $d_1 + \cdots + (\ell-1) d_{\ell-1} \equiv 0 \pmod \ell$,
\[{\mathcal{F}}_{[d_1,\dots, d_{\ell-1}]}
  = \mathcal{F}_{(d_1,\dots, d_{\ell-1})} \cup \bigcup_{j=1}^{\ell-1}
\mathcal{F}_{(d_1,\dots, d_j-1,\dots, d_{\ell-1})}.\]
The elements in the first term correspond to affine models from Equation \eqref{eq:affinemodel} in the case in which there
is no ramification at infinity. The elements in the other terms correspond to affine models where there is ramification at infinity
(of index $j$ in the term $\mathcal{F}_{(d_1,\dots, d_j-1,\dots, d_{\ell-1})}$).

Now suppose that we want to count the number of covers of genus $g$. For a
conductor $f_1f_2^2\cdots f_{\ell-1}^{\ell-1}$, there are $\ell$
different covers according to
the class of the leading coefficient as an element of
$\F_q^\times/(\F_q^\times)^\ell$. In addition, we need to consider that isomorphic covers are obtained by taking
automorphisms of $\mathbb{P}^1(\F_q)$, namely the $q(q^2-1)$ elements of $\mathrm{PGL}_2(\F_q)$ (see Section 7 of \cite{bdfl} for details).

We denote
by $\mathcal{H}^{(d_1,\dots,d_{\ell})}$ the corresponding component of $\mathcal{H}_g$, indexed by tuples $(d_1,\dots,d_{\ell})$
with the properties that $(\ell-1)(d_1+\cdots+d_{\ell-1}-2)=2g$ and $d_1+2d_2+\cdots +(\ell-1)d_{\ell-1}\equiv 0 \,(\mathrm{mod}\, \ell)$. The discussion in the previous paragraphs implies
\[|\mathcal{H}^{(d_1, \dots,d_{\ell-1})}|'=\frac{\ell}{q(q^2-1)}|{\mathcal{F}}_{[d_1,\dots, d_{\ell-1}]}|\]
where, as usual, the $'$ notation means that the covers $C$ on the moduli space are counted with the usual weights $1/|\mathrm{Aut}(C)|$.

Thus, we can write
\begin{equation}\label{sumL}
|\mathcal{H}_{g,\ell}(\F_q)|'
=\frac{\ell}{q(q^2-1)}\sum_{\substack{{d_1+\cdots+d_{\ell-1}=2(g+2)/(\ell-1)}\\{d_1+\cdots
+(\ell-1)d_{\ell-1}\equiv 0\!\! \pmod \ell}}}|\mathcal{F}_{[d_1,\dots,
d_{\ell-1}]}|.
\end{equation}

Formula (3.1) of \cite{bdfl} says that
\begin{eqnarray} \label{fromBDFL} |\mathcal{F}_{(d_1,  \dots,
d_{\ell-1})}|=\frac{L_{\ell-2}q^{d_1+\cdots+d_{\ell-1}}}{\zeta_q(2)^{\ell-1}}
\left(1+ O\left(\sum_{h=2}^{\ell-1}q^{\varepsilon(d_h+\cdots +d_{\ell-1})-d_h}
+q^{-d_1/2 }  \right)\right),\end{eqnarray}
where
\[L_{\ell-2}=\prod_{j=1}^{\ell-2} \prod_{v \not = v_\infty}
\left(1-\frac{jq^{-2\deg v}}{(1+q^{-\deg v})(1+jq^{-\deg v})}\right).\]

The formula above may be rewritten as
\begin{align*}|\mathcal{F}_{(d_1,  \dots,
d_{\ell-1})}|
&=\frac{(\ell-2)!C_\ell q^{d_1+\cdots+d_{\ell-1}}}{
(1+(\ell-1)q^{-1})} \left(1+ O\left(\sum_{h=2}^{\ell-1}q^{\varepsilon(d_h+\cdots
+d_{\ell-1})-d_h} +q^{-d_1/2 }  \right)\right).\\
 \end{align*}

This gives
\begin{eqnarray*}
q(q^2-1)|\mathcal{H}_{g,\ell}(\F_q)|'&=&(\ell-2)!C_\ell q^{\od}
\sum_{\substack{{d_1+\cdots+d_{\ell-1}=\od}\\{d_1+\cdots +(\ell-1)d_{\ell-1}\equiv 0
\!\pmod  \ell}}}\ell  +ET\\
&=&C_\ell \od^{\ell-2} q^{\od}
+ET,
\end{eqnarray*}
where $ET$ denotes an error term and in the last line, we used that the number of solutions of $d_1+\cdots
+d_{\ell-1}=\od$
is given by $\binom{\od+\ell-2}{\od}\sim \frac{\od^{\ell-2}}{(\ell-2)!}$.

Thus the result of Theorem \ref{caseL} is compatible with the result of
Theorem 3.1 from \cite{bdfl}, in the sense that  summing the main terms of
\eqref{fromBDFL} gives the same number of elements of $\mathcal H_{g, \ell}(\F_q)$ computed in this paper,
even if the error terms coming from \eqref{fromBDFL} are only valid when  $\min\{d_1, \dots, d_{\ell-1}\} \rightarrow \infty$.

\section{Acknowledgments} This work was initiated at a workshop  at the American Institute for Mathematics (AIM)  and a substantial part of this work was completed during a SQuaREs program at AIM. The authors would like to thank AIM for
these opportunities. The authors would also like to thank Colin Weir and Simon Rubinstein-Salzedo for useful discussions.

This work was supported by
  the Simons Foundation [\#244988 to A. B.], the National Science Foundation [DMS-1201446 to B. F., DMS-1147782 and DMS-1301690 to M. M. W. ], PSC-CUNY [to B. F.], the Natural Sciences and Engineering Research Council
 of Canada [Discovery Grant 155635-2013 to C. D., 355412-2013 to M. L.], the Fonds de recherche du Qu\'ebec - Nature et technologies [144987 to M. L., 166534 to C. D. and M. L.],  the Scientific and Research Council of Turkey [TUBITAK-114C126 to E. O.] and the American Institute for Mathematics [Five Year Fellowship to M. M. W.].

\end{document}